\definecolor{brightcerulean}{rgb}{0.11, 0.67, 0.84}
\definecolor{cerulean}{rgb}{0.0, 0.48, 0.65}
\definecolor{Gray}{rgb}{0.5, 0.5, 0.5}
\definecolor{brightcerulean}{rgb}{0.11, 0.67, 0.84}
\definecolor{cerulean}{rgb}{0.0, 0.48, 0.65}
\definecolor{Gray}{rgb}{0.5, 0.5, 0.5}
\definecolor{columbiablue}{rgb}{0.61, 0.87, 1.0}
\definecolor{aleacolor}{rgb}{0.16,0.59,0.78}
\theoremstyle{plain}
\makeatletter \@addtoreset{equation}{section} \makeatother
\newtheorem{thm}{Theorem}[section]
\newtheorem{lem}[thm]{Lemma}
\newtheorem{cor}[thm]{Corollary}
\newtheorem{prop}[thm]{Proposition}
\newtheorem{rem}[thm]{Remark}
\newcommand\bE{{\mathbb E}}
\newcommand\bN{{\mathbb N}}
\newcommand\bP{{\mathbb P}}
\newcommand\bQ{{\mathbb Q}}
\newcommand\bR{{\mathbb R}}
\newcommand\bT{{\mathbb T}}
\newcommand\PP{{\mathbb P}}
\newcommand\R{{\mathbb R}}
\newcommand\Z{{\mathbb Z}}
\newcommand{\mc}[1]{{\mathcal #1}}
\newcommand{\bb}[1]{{\mathbb #1}}
\newcommand{\gab}[1]{\textcolor{blue}{#1}}
\renewcommand{\bar}{\overline}
\renewcommand{\leq}{\leqslant}
\renewcommand{\geq}{\geqslant}
\newcommand{\cro}[1]{\left[#1\right]}
\newcommand{\bra}[1]{\left\{#1\right\}}
\newcommand{\pa}[1]{\left(#1\right)}
\newcommand{\abs}[1]{\left|#1\right|}
\let\oldtocsection=\tocsection
\let\oldtocsubsection=\tocsubsection
\let\oldtocsubsubsection=\tocsubsubsection
\renewcommand{\tocsection}[2]{\hspace{0em}\oldtocsection{#1}{#2}}
\renewcommand{\tocsubsection}[2]{\hspace{1em}\oldtocsubsection{#1}{#2}}
\renewcommand{\tocsubsubsection}[2]{\hspace{2em}\oldtocsubsubsection{#1}{#2}}
\DeclareRobustCommand{\SkipTocEntry}[5]{}
\title[]{Hydrodynamics for  SSEP with non-reversible\\ slow  boundary dynamics: Part II, below the critical regime}
\author{C. Erignoux}
\address{Cl\'ement Erignoux, Equipe PARADYSE, Bureau B211
Centre INRIA Lille Nord-Europe
Park Plaza, Parc scientifique de la Haute-Borne, 40 Avenue Halley B\^atiment B, 59650 Villeneuve-d'Ascq
France}
\email{{\tt clement.erignoux@inria.fr}}
\author{P.  Gon\c calves}
\address{Patr\'icia Gon\c calves, Center for Mathematical Analysis,  Geometry and Dynamical Systems,
Instituto Superior T\'ecnico, Universidade de Lisboa,
Av. Rovisco Pais, 1049-001 Lisboa, Portugal.}
\email{{\tt pgoncalves@tecnico.ulisboa.pt}}
\author{G. Nahum}
\address{Gabriel Nahum, Center for Mathematical Analysis,  Geometry and Dynamical Systems, Instituto Superior T\'ecnico, Universidade de Lisboa,
	Av. Rovisco Pais, 1049-001 Lisboa, Portugal.}
\email{{\tt gabriel.nahum@tecnico.ulisboa.pt}}
\thanks{C.E. gratefully acknowledges funding from the European Research Council under the European Unions Horizon 2020 Program, ERC Consolidator GrantUniCoSM (grant agreement no 724939). P.G. thanks FCT/Portugal for support through the project UID/MAT/04459/2013. G.N thanks FCT/Portugal for the support through the project Lisbon Mathematics PhD (LisMath). This project has received funding from the European Research Council (ERC) under  the European Union's Horizon 2020 research and innovative programme (grant agreement   No 715734).}
\keywords{Statistical physics, Hydrodynamic limits, Hydrostatic limits, Lattice gases, Nonequilibrium systems, Nonreversible systems, Exclusion processes, Duality}
\date{\today.}
\begin{document}

\maketitle

\begin{abstract}
The purpose of this article is to provide a simple proof of the hydrodynamic and hydrostatic behavior of the SSEP in contact with reservoirs which inject and remove particles in a finite size windows at the extremities of the bulk. More precisely, the reservoirs inject/remove particles at/from any point of a window of size $K$ placed at each extremity of the bulk and particles are injected/removed to the first open/occupied position in that window. The reservoirs have slow dynamics, in the sense that they intervene at speed $N^{-\theta}$ w.r.t. the bulk dynamics. In the first part of this article,  we treated the case $\theta>1$ for which the entropy method can be adapted. We treat here the case where the boundary dynamics is too fast for the Entropy Method to apply. We prove using duality estimates inspired by previous work that the hydrodynamic limit is given by the heat equation with Dirichlet boundary conditions, where the density at the boundaries is fixed by the parameters of the model.
\end{abstract}

\section{introduction}

We consider a $1$-dimensional boundary driven lattice gas, whose boundary dynamics is non-reversible with respect to product measures, and which is slowed down by an extra factor $N^{-\theta}$ with respect to the diffusive scaling $N^2$ of the bulk SSEP dynamics. In the first part of this article \cite{EGN}, the constant $\theta$ is assumed to be larger or equal to $1$, which allowed us to adapt the classical entropy method to our non-reversible dynamics, to derive both the hydrodynamic and hydrostatic limits under suitable technical assumptions. In the case $\theta=1$, we show that both scaling limits exhibit so-called non-linear Robin boundary conditions, whereas in the $\theta>1$ regime, the boundary dynamics is to slow to be visible in the diffusive time scale and the hydrodynamic limit is ruled by the heat equation with Neumann boundary conditions. 

In the case $0<\theta <1$, however, the non-reversible boundary dynamics generates entropy w.r.t. equilibrium product measure at a fast rate, so that the classical entropy estimates are not sharp enough to derive the hydrodynamics nor the hydrostatic limit. For this reason, we instead adapt the tools exploited in \cite{ELX18,E18} (non-reversible dynamics, diffusive scaling of the boundary dynamics) to our non-reversible slowed down case, to prove that the macroscopic behavior of the  system started close to a given profile $f_0$ is ruled by the heat equation with Dirichlet boundary conditions
\begin{equation*}
\label{hydroInt}
 \begin{cases}
 \partial_{t}\rho_{t}(u)=  \Delta\, {\rho} _{t}(u)& \mbox{for }  t\geq 0,\; u\in(0,1)\\
 \rho _{t}(0)= \alpha  &\mbox{for }   t> 0\\
 \rho_t(1)= \alpha' &  \mbox{for }  t> 0\\
 \rho_0(\cdot)= f_0(\cdot)&
 \end{cases},
\end{equation*}
and that the macroscopic stationary profile is given by the linear interpolation $\rho^*(u)=\alpha+u(\alpha'-\alpha)$.
The general strategy, both for the hydrostatics and the hydrodynamics described above, is to directly estimate the discrete density field $\rho^N$ defined in \eqref{eq:densityfield} and the 2-point correlation field $\varphi^N$ defined in \eqref{eq:Defphi}, which both solve discrete difference equations. Due to the non-reversible dynamics, however, these equations are not closed. In \cite{ELX18,E18} (in the regime  $\theta=0$), this difficulty is solved by artificially closing  the equations with unknown boundary terms, and then estimating the missing terms by  using duality. In \cite{DMP12}, De Masi et al. treated a non-reversible boundary dynamics in the slow scaling, corresponding here to the case $\theta=1$, and instead completely estimate the cascading $n$-points correlation field. They prove that for $n$ large enough, those correlation field vanish, and this is sufficient to backtrack those estimates to the initial $2$-points correlation field.  This involves significant technical and phenomenological difficulties which can be overcome in the case $\theta\geq 1$ without so much effort. In fact,  their proof can likely be extended to the case $\theta>1/2$ with a bit more work, but we do not pursue this issue here since our proof can cover all the cases $\theta\in(0,1)$.

In the present article, we bridge the gap between \cite{ELX18,E18} (case $\theta=0$), and \cite{DMP12} (case $\theta=1$), and consider the case $ 0<\theta<1$.
In order to avoid to try and obtain the same type of delicate $n$-points correlation estimates used in \cite{DMP12}, we adapt the strategy of \cite{ELX18,E18}, and under the assumption \eqref{ass:thetal1} below, we are able to artificially close the discrete difference equations for the density and for the $2$-points correlation field. More precisely, we consider a boundary dynamics with two distinct mechanisms, one representing the system being put in contact with a infinite equilibrium reservoir (occurring at rate $r$), and a second non-reversible creation/annihilation process whose rates $b(\eta)$ depend on the local configuration at the boundary. The key point of the proof is the study of a branching process, representing the interaction of a site whose value is currently unknown with the rest of the system. Whenever a reservoir is queried (rate $r$), one of its branches dies, whereas when the non-reversible event occurs, the process branches out.  Assumption \eqref{ass:thetal1} allows us, roughly speaking, to close the equation, by ensuring that this dual branching process eventually dies out, thus allowing us to determine the value of the unknown site. Note that the techniques developed here are not specific to the choice of the function $b$ made in this article, and can be applied to any local non-reversible dynamics  under an assumption analogous to \eqref{ass:thetal1}.

Because of the slow boundary, unlike in \cite{ELX18,E18} the dual branching process determining the value of the density filed at the boundary self-decorrelates. As a consequence, one obtains explicit formulae (cf. \eqref{eq:alpha}) for the macroscopic Dirichlet boundary conditions $\alpha$ and $\alpha'$ appearing in the hydrodynamic limit \eqref{hydroInt}.
Because of this self-decorrelation property, one could expect that, unlike in the case $\theta=0$, assumption \eqref{ass:thetal1} could possibly be dismissed. This, however, would involve fundamental change in the dual process at the core of our method, and therefore significant further difficulties in the current proof, it is thus left as an open problem at this point. Another natural question is that of fluctuations around equilibrium for non-reversible boundary dynamics. Within the current state of the art however, deriving the equilibrium fluctuations requires sharp estimates on the $2$-point space-time correlation field   at the boundaries of the system, and it is not clear whether such estimates actually hold. In any case, such an estimate is not achievable with the technique employed in the current work.

As does the one laid out in \cite{DMP12}, our approach works in the case $\theta\geq1$, although technical changes would be required to account for the modified boundary conditions. Let us comment briefly on the the boundary densities derived in \eqref{eq:alpha}. In the case of a single reservoir with $0<\theta<1$, the current due to reservoir interaction is on a faster time scale than the particle current from the boundary to the bulk, so that the boundary of the system thermalizes immediately at the density which makes the reservoir current vanish, i.e. that of the reservoir. In our case, site $2$ is affected by a non-reversible dynamics, but the overall boundary dynamics is still faster than the bulk dynamics, so that the system immediately enforces a boundary density at which the reservoir and non-reversible current cancel each other out. This condition is precisely given by \eqref{eq:alpha}. In the case $\theta=1$, the boundary currents operate on the same time scale as the bulk current, so that the boundary density takes the value as with the stirring current cancels out the two others. Finally, for $\theta>1$, the boundary dynamics is slow, so that in the hydrodynamic limit, the boundary stirring currents cancel out, thus yielding Neumann boundary conditions.

\section{Model and main result}

Let $N$ be a scaling parameter and  denote by  $\Lambda_N=\{1, \ldots, N-1\}$ the bulk of the system.   We consider a continuous time Markov process $\{\eta_t:\,t\geq{0}\}$, with state space $\Omega_N:=\{0,1\}^{\Lambda_N}$. We denote $\eta$ the configurations, i.e. elements of  $\Omega_N$ where $\eta(x)=0$ means that the site $x$ is vacant while $\eta(x)=1$ means that the site $x$ is occupied. The process $\{\eta_t:\,t\geq{0}\}$ is driven by the infinitesimal generator  
\begin{equation}\label{generator_ssep}
	\mc L_{N}=N^2\mc L_{N,0}+N^{2-\theta}\pa{\mc L_{N, l}+\mc L_{N, r}},
\end{equation}
where $\theta\in(0,1)$. The parts of this generator act on functions $f:\Omega_N\rightarrow \bb{R}$ as
\begin{equation}\label{eq:gen}
	(\mc L_{N,0}f)(\eta)=
	\sum_{x=1}^{N-2}\Big(f(\eta^{x,x+1})-f(\eta)\Big)
\end{equation}
and 
\begin{align*}
	(\mc L_{N,l}f)(\eta)&=
	c_{l,1}(\eta)\Big(f(\eta^{1})-f(\eta)\Big)+	c_{l,2}(\eta)\Big(f(\eta^{2})-f(\eta)\Big),\\
	(\mc L_{N,r}f)(\eta)&=
	c_{r,1}(\eta)\Big(f(\eta^{N-1})-f(\eta)\Big)+c_{r,2}(\eta)\Big(f(\eta^{N-2})-f(\eta)\Big)
\end{align*} 
with rates
\begin{align*}
c_{l,1}(\eta)&=r \cro{\bar \rho (1-\eta(1))+(1-\bar \rho)\eta(1)},\\
c_{r,1}(\eta)&=r' \cro{\bar \rho'(1-\eta(N-1))+(1-\bar \rho')\eta(N-1)},\\
c_{l,2}(\eta)&=c\cro{\eta(1)(1-\eta(2))+(1-\eta(1))\eta(2)}+b\eta(1)(1-\eta(2)),\\
c_{r,2}(\eta)&=c'\cro{\eta(N-1)(1-\eta(N-2))+(1-\eta(N-1))\eta(N-2)}+b'\eta(N-1)(1-\eta(N-2)).
\end{align*}
In the formulation above, we shortened

\begin{equation*}
\eta^{x,y}(z) = 
\begin{cases}
\eta(z), \; z \ne x,y\\
\eta(y), \; z=x\\
\eta(x), \; z=y
\end{cases}
, \quad \mbox{ and } \quad \eta^x(z)= 
\begin{cases}
\eta(z), \; z \ne x\\
1-\eta(x), \; z=x
\end{cases}.
\end{equation*}
For convenience, we reparametrized the boundary generators w.r.t. Part I of this article \cite{EGN}. To get back to the notations from \cite{EGN}, one can define 
\begin{align*}
	&\alpha_1=r\bar\rho,\quad\gamma_1=r(1-\bar\rho),\quad
	\alpha_2=b+c,\quad \gamma_2=c,\\ 
	&\beta_1=r'\bar\rho',\quad\delta_1=r'(1-\bar\rho'),\quad
	\beta_2=b'+c',\quad\delta_2=c'.
\end{align*}
We assumed (using Part 1's notations) that $\alpha_2\geq \gamma_2$ and $\beta_2>\delta_2$. This is purely for convenience:  if for example, $\alpha_2\leq \gamma_2$,  one would merely switch $\alpha_2=c$, $\gamma_2=b+c$,  for which our entire proof holds as well. Throughout this article, $r$, $b$, $c$, $r'$, $b'$, $c'$ and $\theta$ are constant, and even without specific mention, all quantities can depend on them. With these notations, we can reinterpret the boundary generator as follows: at rate $r$ (resp. $r'$), site $1$  (resp. $N-1$) is replaced by a Bernoulli with {parameter $\bar \rho$ (resp. $\quad  \bar \rho'$).} At rate $c$ (resp. $c'$), site $2$ (resp. $N-2$) becomes a copy of site $1$ (resp. $N-1$), and finally, at rate $b$ (resp. $b'$), site $2$ (resp. $N-2$) is filled by a particle if site $1$ (resp. $N-1$) is occupied, otherwise it is left unchanged.

\medskip

The case $\theta\geq 1$ was treated in \cite{EGN}. We consider here the case where $\theta<1$, i.e. the case where the reservoirs are strong. The boundary dynamics we chose does not admit product measures as stationary states, and because $ \theta<1$ they are not slowed down enough for the usual entropy estimates to hold. Instead, as in \cite{ELX18}, \cite{E18}, we exploit duality estimates on random walks, to write both the discrete profile and the two-point  correlation function as solutions of ``artificially'' closed equations, and then estimate the boundary terms needed to close them. Our main assumption is
\begin{equation}
\label{ass:thetal1}
\tag{H1}
b<r\quad  \mbox{ and  }\quad b'<r',
\end{equation}
which is analogous to assumption (2.13) in \cite{ELX18}. {As briefly described in the introduction, this assumption is made in order for the dual branching process defined in Section \ref{unknowns} to eventually die.}
Define $\alpha$ and $\alpha'$ as the unique solutions in $[0,1]$ of the equations
\begin{equation}
\label{eq:alpha}
r(\bar\rho-\alpha) +b\alpha(1-\alpha)=0\quad \mbox{and } \quad r'(\bar\rho'-\alpha') +b'\alpha'(1-\alpha')=0.
\end{equation}

Note that we have the explicit expressions
\[\alpha=f(r, b, \bar\rho), \quad \alpha'=f(r', b', \bar\rho'), \quad \mbox{ where }\quad f(r,b,\bar\rho)=\frac{\sqrt{(r-b)^2+4br\bar\rho}+b-r}{2b},\]
{and that the parameters $c$, $c'$ play no role in these definitions.}
\medskip

 Fix an initial profile  
 $f_0:[0,1]\rightarrow[0,1]$ in $C^1([0,1])$, and start the process  $\{\eta_t:\,t\geq{0}\}$ from the product measure 
 \[\mu _N(\eta)=\prod_{x\in \Lambda_N}\Big[f_0(x/N)\eta(x)+(1-f_0(x/N))(1-\eta(x))\Big]\] 
 fitting the initial profile $f_0$. We denote  by $\PP_{\mu _{N}}$ the distribution of the process   $\{\eta_t:\,t\geq{0}\}$ started from the distribution $\mu_N$ and with infinitesimal generator given by \eqref{generator_ssep}.
We are now ready to state our main result.

\begin{thm}[Hydrodynamic limit]
\label{th:hyd_thetaless1}
Under assumption \eqref{ass:thetal1}, for any $t\geq 0$, any Riemann-integrable function $G$, and  any $\delta>0$
\begin{equation*}\label{limHidreform2}
 \lim _{N\to\infty } \PP_{\mu _{N}}\pa{ \left\vert \dfrac{1}{N-1}\sum_{x \in \Lambda_{N} }G\left(\tfrac{x}{N} \right)\eta_{t}(x) - \int_{0}^1G(u)\rho_{t}(u)du \right\vert    > \delta }= 0,
\end{equation*}
where  $\rho_{t}(\cdot)$ is the unique classical solution of the heat equation with Dirichlet boundary conditions $\alpha$ and $ \alpha'$ solutions of \eqref{eq:alpha}:
\begin{equation}
\label{eq:PDEinf1}
 \begin{cases}
 \partial_{t}\rho_{t}(u)=  \Delta\, {\rho} _{t}(u)& \mbox{for }  t\geq 0,\; u\in(0,1)\\
 \rho _{t}(0)= \alpha  &\mbox{for }   t\geq 0\\
 \rho_t(1)= \alpha' &  \mbox{for }  t\geq 0\\
 \rho_0(\cdot)= f_0(\cdot)&
 \end{cases}.
\end{equation}
\end{thm}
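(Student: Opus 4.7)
The strategy is to adapt the duality approach of \cite{ELX18, E18} to the slowed boundary. Let $\rho^N_t(x) := \bE_{\mu_N}[\eta_t(x)]$ and $\varphi^N_t(x,y) := \bE_{\mu_N}[\eta_t(x)\eta_t(y)]$. Applying Dynkin's formula to the generator \eqref{generator_ssep}, one checks that on bulk sites $3 \le x \le N-3$ the field $\rho^N_t$ satisfies the discrete heat equation $\partial_t \rho^N_t(x) = N^2 \Delta_N \rho^N_t(x)$, while at each of the four boundary sites extra terms appear. The key algebraic obstruction is that, due to the non-reversible rate $b$ (resp.\ $b'$), the equation for $\rho^N_t(2)$ (resp.\ $\rho^N_t(N-2)$) involves $\varphi^N_t(1,2)$ (resp.\ $\varphi^N_t(N-1,N-2)$), so the hierarchy is not closed.

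To bypass this, I first close the system artificially by treating $\rho^N_t(1)$ and $\rho^N_t(N-1)$ as unknown Dirichlet data $\alpha^N_t$ and $\beta^N_t$ and representing the remaining $\rho^N_t(x)$ via the discrete heat semigroup on $\{1,\dots,N-1\}$ with these boundary values. Convergence of this discrete semigroup to the continuous Dirichlet one is standard; the bulk statement of Theorem~\ref{th:hyd_thetaless1} is therefore reduced to two ingredients: (i) $\alpha^N_t \to \alpha$ and $\beta^N_t \to \alpha'$ (after an initial boundary layer of vanishing macroscopic length), uniformly on compact time intervals, with $\alpha,\alpha'$ as in \eqref{eq:alpha}; and (ii) the empirical measure concentrates around its mean, which reduces to a bound on the centered two-point correlations $\varphi^N_t(x,y) - \rho^N_t(x)\rho^N_t(y)$.

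For (i), I set up the dual process that produces the value of $\eta_t(1)$ from the initial configuration and the reservoir inputs. Because of the $b$-transition, this dual is not a simple random walk but a \emph{branching} process: a call to the reservoir (rate $\propto r N^{-\theta}$) replaces a branch by an independent Bernoulli$(\bar\rho)$ variable and thus \emph{kills} that branch, while a $b$-event at rate $\propto b N^{-\theta}$ creates a new branch representing the product $\eta(1)(1-\eta(2))$ that enters the generator computation. Under assumption \eqref{ass:thetal1}, $b<r$, the process is subcritical, so all branches die on the boundary time scale $N^\theta$, which is macroscopically instantaneous. The slow rescaling $N^{-\theta}$ further ensures that successive boundary excursions are well separated in time (``self-decorrelation''), so the resulting variables are asymptotically i.i.d.\ Bernoulli$(p)$. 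The parameter $p$ is identified by writing the stationarity condition for the branching process: the expected influx from reservoir queries must balance the expected $b$-creation, which yields exactly $r(\bar\rho - p) + b p(1-p) = 0$, i.e.\ $p = \alpha$ by \eqref{eq:alpha}. The right boundary is treated identically.

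Step (ii) follows from an analogous analysis of the discrete equation for $\varphi^N_t(x,y)$, whose bulk dynamics is again a discrete heat equation (now on the off-diagonal part of $\{1,\dots,N-1\}^2$) and whose boundary terms can be controlled by the same branching dual, upgraded to pairs of sites. A discrete heat-kernel estimate then shows the centered two-point function vanishes uniformly. Combining (i), (ii) and standard tightness of the empirical measure in the Skorokhod space, any limit point is supported on weak solutions of \eqref{eq:PDEinf1} with boundary data $\alpha,\alpha'$; uniqueness of this Dirichlet problem closes the argument. The main obstacle, and the technical core of the proof, is step (i): one must show that the subcritical branching process indeed mixes to its stationary regime on a time scale much shorter than~$1$ but much longer than the typical inter-arrival time of boundary events, and that its stationary average is the explicit fixed point $\alpha$. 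This is exactly what assumption \eqref{ass:thetal1} makes possible.
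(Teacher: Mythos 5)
Your proposal follows the same overall route as the paper: artificially close the discrete difference equations for the density field $\rho^N_t$ and for the centred two-point function $\varphi^N_t$ with unknown boundary terms, then estimate those boundary terms via a dual branching process which, under assumption \eqref{ass:thetal1}, is subcritical and therefore dies out, and whose self-decorrelation on the $N^\theta$ time scale yields the explicit fixed-point equation $r(\bar\rho-\alpha)+b\alpha(1-\alpha)=0$. This is precisely the structure of Sections~\ref{sec:densityfield}--\ref{sec:branching} and of Lemmas~\ref{lem:res}, \ref{lem:bcor}, \ref{lem:probdeath}, \ref{lem:tree} and Corollary~\ref{prop:valL}. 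Your heuristic for identifying $\alpha$ (``influx from reservoir queries balances $b$-creation'') is made rigorous in the paper by the Galton--Watson-type determination tree and the recursion $\alpha_N = p^N_+ + p^N_b\alpha_N + p^N_b(1-\alpha_N)\alpha_N$ in Corollary~\ref{prop:valL}.

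There is, however, one concrete misstep in your closure that would make it fail as written. You take $\rho^N_t(1)$ and $\rho^N_t(N-1)$ as the unknown Dirichlet data and propose representing the remaining $\rho^N_t(x)$ via the discrete heat semigroup on $\{1,\dots,N-1\}$. That does \emph{not} close the hierarchy: as you yourself note, $\partial_t\rho^N_t(2)$ contains the extra boundary contribution $N^{2-\theta}\bE\big[(\mathcal{L}_{N,l}\eta_t)(2)\big]$, which involves $\varphi^N_t(1,2)$; knowing $\rho^N_t(1)$ does not remove it, and the pure discrete heat representation is simply false at sites $2$ and $N-2$. The paper therefore places the artificial Dirichlet boundary at sites $3$ and $N-3$, so that the clean discrete Laplacian governs $\rho^N_t(x)$ for $4\le x\le N-4$, closed by $\alpha_{t,N}=\rho^N_t(3)$ and $\alpha'_{t,N}=\rho^N_t(N-3)$ (see equation~\eqref{eq:discrho}); correspondingly, the dual process is started from $\{3\}$, which is outside the boundary window $\{1,2\}$ and gives the excursion decomposition its clean structure. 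With that relocation your plan essentially coincides with the paper's. One further, smaller, difference: the paper does not pass through tightness of the empirical measure and limit points, but instead derives a direct $L^2$ bound on $\tfrac{1}{N}\sum_x G(x/N)\eta_t(x)-\int G\rho_t$ from Propositions~\ref{prop:discapprox} and~\ref{prop:correlations}; this is a matter of taste, both are valid given the ingredients.
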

\begin{rem}[Initial distribution]
We chose the initial distribution to be a product measure fitting a smooth initial profile. This is mainly not to burden the proof, which would hold as well assuming that the correlations of the initial distribution decay uniformly, 
\[\limsup_{\ell\to\infty}\limsup_{N\to\infty}
\sup_{\substack{ x,y\in \Lambda_N \\
|x-y|\geq \ell}}\mu_N\cro{(\eta(x)-f_0(x))( \eta(y)-f_0(y))}=0.\]
The regularity assumption on the initial profile $f_0$ could also be weakened, but that would entail significant extra technical difficulties to prove Lemma \ref{lem:grad72} below. 
\end{rem} 

\begin{rem}[On assumption \eqref{ass:thetal1}]
As was already mentioned, Assumption \eqref{ass:thetal1} is analogous to assumption (2.13) in \cite{ELX18}. The reason for this assumption is the following. In order to determine the value of $\eta_t$ at the boundaries, we are able to write $\eta_t(3)$ as a function of its past, by following the sites that had an influence over its value. This allows us to write $\eta_t(3)$ as a function of a branching process, which branches at the boundaries at rate $b$, $b'$ and dies at rate $r$, $r'$. Assumption \eqref{ass:thetal1} ensures that this branching process ultimately dies out completely, and that we are able to determine the value of $\eta_t(3)$.

\end{rem}

\begin{rem}[The case $\theta=0$]
Note that under an assumption analogous to (2.13) in \cite{ELX18}, the case $\theta=0$ is a consequence of  \cite{ELX18}. However, unlike in the present article, in the case $\theta=0$ correlations are introduced in the boundary dynamics, so that we no longer have an explicit expression for the boundary densities $\alpha$ and $\alpha'$. This is due to the fact that the distribution of the determination tree (cf. Section \ref{label:determination}) is no longer close to a Galton-Watson distribution, i.e. Lemma \ref{lem:tree} no longer holds.
\end{rem}

\begin{rem}[Heuristics on \eqref{eq:alpha}]
Roughly speaking, \eqref{eq:alpha} is an equilibrium formula for the boundary dynamics, in which the stirring jump rate is infinite (which is formally the case in the limit $N\to\infty$, since the boundary dynamics is slowed down). Unlike in the case $\theta=0$, the boundary sites decorrelate, and formally letting $\alpha=\bE(\eta_1)=\bE(\eta_2)=\bE(\eta_3)=\dots$ in the limit $N\to\infty$, and then using  $\bE(\mc L_{N}\eta_1)=\bE(\mc L_{N}\eta_2)=0$ yields \eqref{eq:alpha}. This decorrelation at the boundary, due to the slowed down dynamics, is the reason why an explicit formula for the boundary density can be achieved.
\end{rem}

\begin{rem}[General jump rates]
The proof we present for Theorem \ref{th:hyd_thetaless1} is not specific to these jump rates. It would actually hold for any perturbation of the flipping dynamics considered in \cite{ELX18}, (under an analogous assumption to Equation (2.13) in \cite{ELX18}). However, in order not to burden the proof and for consistency w.r.t. \cite{EGN}, we choose the boundary generator given in \eqref{eq:gen}. 
Note that unlike in \cite{ELX18}, because of the slowed down boundary dynamics, an explicit expression for the boundary densities can be derived, however due to the construction of our dual process and its associated tree (cf. Section \ref{label:determination}), obtaining an explicit formula in the general case would prove burdensome.
\end{rem}

The Markov chain induced by the generator $(\mc L)_N$  defined in \eqref{generator_ssep} is irreducible on $\Omega_N$. We will denote by $\mu_N^{ss}$ its unique stationary state. Given the duality techniques used to prove Theorem \ref{th:hyd_thetaless1}, the hydrostatic limit for this model is a straightforward adaption of the hydrodynamic limit, we state it but will not prove it. Instead, we refer the reader to \cite{ELX18} for more details. 
\begin{thm}[Hydrostatic limit]
\label{th:hydrostat}
Under assumption \ref{ass:thetal1}, for  any Riemann integrable function $G$ and  any $\delta>0$
\begin{equation*}\label{limHidreform2}
 \lim _{N\to\infty } \mu^{ss} _{N}\cro{ \left\vert \dfrac{1}{N-1}\sum_{x \in \Lambda_{N} }G\left(\tfrac{x}{N} \right)\eta(x) - \int_{0}^1G(u)\rho^*(u)du \right\vert    > \delta }= 0,
\end{equation*}
where  $\rho^*(\cdot)$ is harmonic with boundary conditions $\alpha$ and $ \alpha'$ solutions of \eqref{eq:alpha}, i.e.:
\begin{equation}
\label{eq:PDEinf2}
 \begin{cases}
 \Delta\, {\rho^*}=0, \\
 \rho^* (0)= \alpha  \quad \mbox{ and  } \quad  \rho^* (1)= \alpha',
 \end{cases}
\end{equation}
so that $\rho^*(u)$ is linear connecting $\alpha$ and $\alpha'$.
\end{thm}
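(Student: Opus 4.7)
The plan is to reuse almost verbatim the duality estimates developed for Theorem \ref{th:hyd_thetaless1} and apply them to the stationary measure $\mu_N^{ss}$ in place of the initial product measure $\mu_N$. Under stationarity the time derivatives of the density and of the two-point correlation fields vanish identically, so that the discrete parabolic equations derived in the proof of Theorem \ref{th:hyd_thetaless1} degenerate into discrete elliptic ones, which considerably simplifies the analysis.

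First I would introduce $\bar\rho^N(x):=\bE_{\mu_N^{ss}}[\eta(x)]$. Stationarity gives $\bE_{\mu_N^{ss}}[\mc L_N \eta(x)]=0$ for every $x\in\Lambda_N$, which in the bulk $3\leq x\leq N-3$ amounts to the discrete Laplace equation $\Delta_N\bar\rho^N(x)=0$, but which at the four boundary sites involves non-closed terms coming from the non-reversible rates $b,b'$. Following the strategy laid out in Section \ref{unknowns}, I would artificially close these boundary relations by treating $\bar\rho^N(3)$ and $\bar\rho^N(N-3)$ as unknowns and identifying them via the dual branching process, which under assumption \eqref{ass:thetal1} dies out almost surely. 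Because the boundary is sped up with respect to the bulk by a factor $N^\theta$, the dual process reaches its extinction on a vanishing macroscopic time scale, the boundary sites decorrelate, and the limiting values of $\bar\rho^N$ near the two ends converge to $\alpha,\alpha'$ characterized by \eqref{eq:alpha}. Combined with the discrete maximum principle for $\Delta_N$, this yields the uniform convergence $\bar\rho^N(\lfloor Nu\rfloor)\to\rho^*(u)$, where $\rho^*$ is the unique (linear) solution of \eqref{eq:PDEinf2}.

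Next I would control the two-point correlation field $\varphi^N(x,y):=\bE_{\mu_N^{ss}}[\eta(x)\eta(y)]-\bar\rho^N(x)\bar\rho^N(y)$, which also satisfies a closed discrete elliptic equation sourced at the boundaries. Using a dual process tracking two marks and invoking again the extinction provided by assumption \eqref{ass:thetal1}, I would show that $\sup_{x\neq y}|\varphi^N(x,y)|\to 0$ as $N\to\infty$. Equipped with this estimate and with the uniform convergence of $\bar\rho^N$, the claim of the theorem follows from a Chebyshev bound applied to
\[\frac{1}{N-1}\sum_{x\in\Lambda_N}G(x/N)\bigl(\eta(x)-\bar\rho^N(x)\bigr),\]
whose $L^2(\mu_N^{ss})$ norm is controlled by the individual variances and by $\varphi^N$, while the remaining deterministic term $\frac{1}{N-1}\sum_x G(x/N)\bar\rho^N(x)$ converges to $\int_0^1 G(u)\rho^*(u)\,du$ by Riemann sum convergence.

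The main obstacle is establishing the uniform convergence of $\bar\rho^N$ to the linear profile with the correct Dirichlet values $\alpha,\alpha'$. Although stationarity eliminates time-dependence, the boundary analysis still requires carefully tracking the dual determination tree and verifying that the Galton-Watson approximation of Lemma \ref{lem:tree} remains valid in the stationary regime; this is the only non-trivial adaptation with respect to the proof of Theorem \ref{th:hyd_thetaless1}, and it transposes to the stationary setting with only cosmetic changes compared to the analogous argument in \cite{ELX18} for the case $\theta=0$.
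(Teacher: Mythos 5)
The paper does not actually supply a proof of Theorem~\ref{th:hydrostat}: after defining $\mu_N^{ss}$, it states that ``the hydrostatic limit for this model is a straightforward adaptation of the hydrodynamic limit, we state it but will not prove it,'' and refers the reader to \cite{ELX18}. Your sketch is precisely the intended adaptation — replace $\mu_N$ by $\mu_N^{ss}$, use stationarity to turn the discrete parabolic equations for $\rho^N$ and $\varphi^N$ into elliptic ones, fix the boundary values $\alpha,\alpha'$ via the extinction of the dual branching process guaranteed by \eqref{ass:thetal1} (Lemma~\ref{lem:res} and Corollary~\ref{prop:valL} transferred to the stationary measure, e.g.\ by taking $t\to\infty$ from any initial state and using irreducibility), control the two-point correlations via the two-mark dual as in Proposition~\ref{prop:correlations}, and conclude by a Chebyshev/second-moment bound. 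One small slip in wording: the boundary dynamics is not ``sped up by $N^\theta$'' relative to the bulk — the generator runs the boundary at rate $N^{2-\theta}$, i.e.\ slowed by $N^{-\theta}$; the relevant point is rather that for $\theta<1$ the slowdown is mild enough that the dual tree still dies on a vanishing macroscopic time scale, as quantified in \eqref{eq:A32}. Apart from that, the proposal is correct and in line with what the authors indicate.
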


 Throughout, we shorten 
\begin{equation}
\label{eq:Defetheta}
\widehat{\theta}:=(1-\theta)/2\quad>0. 
\end{equation}
For $x,y \in \Lambda_N$, $t\geq 0 $, we further introduce the discrete density profile
\begin{equation}
\label{eq:densityfield}
\rho_t^N(x)=\mathbb{E}_{\mu_N}\cro{\eta_{t}(x)}
\end{equation}
 and the two-point correlation function
\begin{equation} 
\label{eq:Defphi}
\varphi_t^N(x,y)=\mathbb{E}_{\mu_N}\cro{\pa{\eta_{t}(x)-\rho_t^N(x)}\pa{\eta_{t}(y)-\rho_t^N(y)}}.
\end{equation}

The article is organized as follows.
To derive the hydrodynamic equation, as in previous works \cite{ELX18,E18}, we estimate the discrete density profile (Section \ref{sec:densityfield}) and the two-point correlation (Section \ref{sec:corr}) function, namely $\rho_t^N$ and $\varphi_t^N$ by writing each one of them as a solution of a discrete difference equation with unknown boundary terms.  One of the main difficulties w.r.t. \cite{ELX18,E18} is to estimate those boundary terms, which require to refine the construction because of the slowed down boundary dynamics. Once this is done, proving the hydrodynamic limit is straightforward (cf.  Section \ref{sec:hydrolimit}). The estimation of the boundary term is the purpose of Section \ref{sec:branching}.  We will refer to  \cite{ELX18,E18} when the results are analogous, and detail the new contributions of this article.

\section{Density field}
\label{sec:densityfield}

In this section, we write the discrete profile $\rho_t^N$ defined in \eqref{eq:densityfield} as an approximation of the solution of \eqref{eq:PDEinf1}. More precisely, we have the following:
\begin{prop}
\label{prop:discapprox}
For any $t\geq 0$, and any Riemann integrable function $G\in C^2([0,1])$, 
\[
\limsup_{N\to\infty}\left(\frac{1}{N}\sum_{x=3}^{N-3}G(x/N)\rho^N_t(x)-\int_{[0, 1]}G(u)\rho_t(u)du\right)^2=0.
\]
\end{prop}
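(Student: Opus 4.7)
The plan is to derive a discrete evolution equation for $\rho_t^N(x)$, compare it with the solution $\rho_t$ of the PDE \eqref{eq:PDEinf1} via a Duhamel representation using the discrete Dirichlet heat kernel, and reduce the whole argument to (i) a standard $O(N^{-2})$ discretization error in the bulk and (ii) a boundary estimate $\rho_s^N(2) \simeq \alpha$, $\rho_s^N(N-2) \simeq \alpha'$ provided by the branching-process analysis in Section~\ref{sec:branching}.

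\textbf{Step 1: discrete equation for $\rho_t^N$.} Applying Dynkin's formula to the generator \eqref{generator_ssep} and computing $\mc L_N \eta(x)$, one checks that for every $x \in \{3, \ldots, N-3\}$,
\begin{equation*}
\partial_t \rho_t^N(x) \;=\; N^2 \Delta_N \rho_t^N(x),\qquad \Delta_N h(x) := h(x+1)+h(x-1)-2h(x).
\end{equation*}
At $x=2$ and $x=N-2$ the boundary generators $\mc L_{N,l}, \mc L_{N,r}$ produce quadratic non-closing terms; crucially one does \emph{not} try to close them, and instead treats $\rho_s^N(2)$, $\rho_s^N(N-2)$ as (random) Dirichlet data for the bulk equation.

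\textbf{Step 2: comparison with the PDE.} Let $R_t(x) := \rho_t(x/N)$ and $e_t^N(x) := \rho_t^N(x) - R_t(x)$. Then on $\{3, \ldots, N-3\}$,
\begin{equation*}
\partial_t e_t^N(x) \;=\; N^2 \Delta_N e_t^N(x) + F_t^N(x),
\end{equation*}
where $F_t^N(x) = N^2 \Delta_N R_t(x) - \partial_u^2 \rho_t(x/N) = O(N^{-2})$ by parabolic regularity and Taylor expansion (the smoothness of $f_0$ and the Dirichlet parabolic smoothing give the required $C^4$ bounds on compact subsets of $(0,1)\times(0,\infty)$). Let $p_s^N(x,y)$ denote the transition kernel on $\{3,\ldots,N-3\}$ of $N^2\Delta_N$ killed at $\{2, N-2\}$. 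By Duhamel,
\begin{equation*}
e_t^N(x) \;=\; \sum_{y} p_t^N(x,y)\, e_0^N(y) \;+\; \int_0^t \sum_y p_{t-s}^N(x,y)\, F_s^N(y)\, ds \;+\; B_t^N(x),
\end{equation*}
where $B_t^N(x)$ is a boundary term: $N^2 \int_0^t [\,P^N_{t-s}(x,2)(\rho_s^N(2)-\alpha) + P^N_{t-s}(x,N-2)(\rho_s^N(N-2)-\alpha')\,]\,ds$, with $P^N$ the associated discrete Poisson kernel at the absorbing sites.

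\textbf{Step 3: controlling the three terms.} The initial term vanishes because $\mu_N$ is the product measure fitting $f_0$, hence $e_0^N \equiv 0$. The bulk error $\int_0^t \sum_y p_{t-s}^N(x,y) F_s^N(y)\, ds$ is $O(N^{-2})$ uniformly in $x,t$ since $p_s^N$ is sub-Markov and $\|F_s^N\|_\infty \lesssim N^{-2}$ (away from a boundary layer of width $O(1)$, which only contributes $O(N^{-1})$ after integration against $p_s^N$ and summation against $G$). The boundary term $B_t^N(x)$ is the only delicate part: writing $P^N$ in terms of the random walk hitting time at $\{2, N-2\}$ and using the $\ell^1$-mass bound $N^2 \int_0^t P^N_{t-s}(x,2)\, ds \leq 1$, one gets
\begin{equation*}
|B_t^N(x)| \;\leq\; \sup_{s \leq t} \bigl(|\rho_s^N(2)-\alpha| + |\rho_s^N(N-2)-\alpha'|\bigr),
\end{equation*}
and this last quantity is shown to vanish as $N\to\infty$ by the branching-process analysis of Section~\ref{sec:branching} (explicitly computed through \eqref{eq:alpha} thanks to the self-decorrelation of the dual tree).

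\textbf{Step 4: passage to the integral.} Once $\sup_{x,t}|e_t^N(x)|\to 0$ (or, if necessary, only the $L^1$-in-time version), we write
\begin{equation*}
\frac{1}{N}\sum_{x=3}^{N-3} G(x/N)\rho_t^N(x) - \int_0^1 G(u)\rho_t(u)\, du = \frac{1}{N}\sum_{x=3}^{N-3} G(x/N) e_t^N(x) + \Bigl(\frac{1}{N}\sum_{x=3}^{N-3} G(x/N) R_t(x) - \int_0^1 G(u)\rho_t(u)\, du\Bigr).
\end{equation*}
The first summand is bounded by $\|G\|_\infty \sup_x |e_t^N(x)|\to 0$, and the second vanishes by Riemann-integrability of $G\rho_t$. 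Squaring and taking limits gives the claim.

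\textbf{Main obstacle.} The only non-routine input is controlling $\rho_s^N(2)-\alpha$ (and its right-end analogue). This is where assumption \eqref{ass:thetal1} enters: through the dual branching process of Section~\ref{sec:branching}, $\eta_s(3)$ is rewritten as a functional of a tree that dies out almost surely iff $b<r$, and whose law on $[0,t]$ becomes Galton--Watson-like because the slowed-down boundary makes successive queries decorrelate, producing the explicit fixed point \eqref{eq:alpha}. A secondary technicality is the initial boundary layer when $f_0(0)\neq \alpha$ (or $f_0(1)\neq\alpha'$): the PDE solution is discontinuous at the corners $(t,u)=(0,0),(0,1)$, so all $L^\infty$ bounds on $F_t^N$ must be replaced by $L^1$-in-time bounds, handled by splitting the time integral into $[0,\varepsilon]\cup[\varepsilon,t]$ with $\varepsilon\to 0$ after $N\to\infty$.
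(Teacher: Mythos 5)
Your plan is essentially the paper's argument written in Duhamel form rather than the paper's explicit Feynman--Kac split ($C_l+C_r+C^*$): both compare $\rho^N_t$ with $\rho_t$ via the killed random walk, feed in a bulk discretization error, and feed in the boundary density estimate from Section~\ref{sec:branching}. The route is the same; the packaging differs.

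There are, however, two concrete issues. First, the bound $|B_t^N(x)|\leq \sup_{s\leq t}\bigl(|\rho_s^N(2)-\alpha|+|\rho_s^N(N-2)-\alpha'|\bigr)$ does \emph{not} vanish as $N\to\infty$: for $s$ small, $\rho_s^N(2)\approx f_0(2/N)\approx f_0(0)$, which in general differs from $\alpha$, and Lemma~\ref{lem:res} only controls the boundary density for $s\geq N^{-\widehat{\theta}}$. You cannot absorb this into the "secondary technicality" about the PDE corner discontinuity and $F_t^N$, because the boundary term $B_t^N$ is a separate contribution. What is needed --- and what the paper does --- is to split the Duhamel integral at $s=N^{-\widehat{\theta}}$ (equivalently, split on whether the hitting time $H$ lies in $[0,t-N^{-\widehat{\theta}}]$ or $(t-N^{-\widehat{\theta}},t)$) and to invoke the hitting-time density estimate $\bP^X_{x}\bigl(H_t^X\in (t-N^{-\widehat{\theta}},t)\bigr)=O(N^{-\widehat{\theta}}t^{-3/2})$ (Lemma 3.3 of \cite{E18}) to kill the contribution of small $s$. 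Without this step the boundary term is not controlled.

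Second, a mismatch with the available tools: you place the Dirichlet data at $\{2,N-2\}$, but Lemma~\ref{lem:res} (and the branching-process analysis behind it, which starts the flag at site $3$) controls $\rho^N_s(3)$ and $\rho^N_s(N-3)$, not $\rho^N_s(2)$ and $\rho^N_s(N-2)$. The paper instead artificially closes the discrete equation at $x=3$ and $x=N-3$, restricting the bulk equation to $\{4,\dots,N-4\}$, precisely so that the quantity being pinned down is the one Lemma~\ref{lem:res} estimates. Your setup would require a version of Lemma~\ref{lem:res} at site $2$, which is not proved; shifting the artificial boundary to $\{3,N-3\}$ fixes this at no cost.
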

The main ingredient to prove Proposition \ref{prop:discapprox} is the following Lemma, whose proof is postponed to Section \ref{sec_detfinal} because it requires significant work. 
\begin{lem}
\label{lem:res}
Recall from Equation \ref{eq:alpha} the definition of $\alpha$ and $\alpha'$, there exists $\varepsilon>0$ such that 
\[\sup_{t\geq N^{-\widehat{\theta}}}|\rho_t^N(3)-\alpha|=O(N^{-\varepsilon})\quad  \mbox{ and  }\quad \sup_{t\geq N^{-\widehat{\theta}}}|\rho_t^N(N-3)-\alpha'|=O(N^{-\varepsilon}). \]
\end{lem}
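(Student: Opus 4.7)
My plan is to run the Markov dynamics backward in time and build upon the ``determination tree'' construction alluded to in the introduction and developed in Section~\ref{label:determination}. Starting from a single \emph{unknown} placed at site~$3$ at time~$t$, one tracks back in time the sites whose configuration suffices to determine $\eta_t(3)$. In the bulk, SSEP duality makes this simply a symmetric random walk run at rate $N^2$. On sites $1$ and $2$, three families of events occur at rate $N^{2-\theta}$: a reservoir event at rate $r$ that resolves the unknown into an independent Bernoulli$(\bar\rho)$ draw and thereby kills the branch; a stirring event at rate $c$ that merely moves the unknown between sites $1$ and $2$; and the non-reversible event at rate $b$ which, because $c_{l,2}(\eta)$ contains the quadratic term $\eta(1)(1-\eta(2))$, \emph{branches} the unknown at site~$2$ into a product of two unknowns at sites $1$ and $2$ (most transparently formulated in the dual variables $\zeta = 1-\eta$ for which the $b$-event cleanly replaces $\zeta(2)$ by the product $\zeta(1)\zeta(2)$).

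The core step is to show that this branching--coalescing system extinguishes on the relevant time scale. Comparing the number of live unknowns with a continuous-time Galton--Watson process of birth rate $bN^{2-\theta}$ and death rate $rN^{2-\theta}$, assumption \eqref{ass:thetal1} makes the latter subcritical with spectral gap $(r-b)N^{2-\theta}$. Since the available time is $t \geq N^{-\widehat\theta}$, this gap accumulates to $(r-b)N^{(3-\theta)/2} \to \infty$, so the probability of non-extinction by time~$t$ is at most $e^{-c N^{(3-\theta)/2}}$ for some $c>0$, i.e. super-polynomially small in $N$. On the extinction event, $\eta_t(3)$ is a deterministic polynomial in the Bernoulli$(\bar\rho)$ values drawn at the death vertices of the tree, so $\rho_t^N(3) = \bE_{\mu_N}[\eta_t(3)]$ is determined, up to a super-polynomial error, by the law of the determination tree alone---independently of the initial distribution~$\mu_N$.

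To identify the limit, I would invoke Lemma~\ref{lem:tree}, which asserts that in the slow-boundary regime $\theta>0$ the determination tree is close in total variation to a genuine Galton--Watson tree, because daughter branches spawned at site~$2$ quickly diffuse into the bulk and decorrelate before any opportunity to coalesce. Under this approximation, each branching produces the product of two independent unknowns (contributing $(1-\alpha)^2$ to $\bE[\zeta]$) and each death contributes $\bar\rho$ to $\bE[\eta]$, so the fixed-point equation for the expectation of a generic unknown rearranges to $r(\bar\rho-\alpha)+b\alpha(1-\alpha)=0$, which is exactly \eqref{eq:alpha} with its unique solution $\alpha\in[0,1]$. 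The polynomial rate $O(N^{-\varepsilon})$ then follows by summing three contributions: (a) the super-polynomially small non-extinction probability, (b) the polynomial total-variation bound from Lemma~\ref{lem:tree}, and (c) the super-polynomially small dependence on~$f_0$, again via the extinction estimate since extinction erases the initial configuration. The right-boundary statement is symmetric.

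The main obstacle I anticipate is point~(b): producing a quantitative Galton--Watson approximation of the determination tree with a polynomial error uniform in $t \geq N^{-\widehat\theta}$. Concretely one must control the rare events in which two branches remain close enough to sites $1, 2$ long enough to coalesce or to interact via stirring; a second-moment estimate on the number of such events, using that branching occurs at rate $N^{2-\theta} \ll N^2$ while a branch that enters the bulk has a positive chance of never returning to the boundary, should deliver the desired polynomial bound. This is precisely where the slowness $\theta>0$ is needed: in the unslowed regime $\theta=0$ of \cite{ELX18} branches interact at the same rate as they branch, Lemma~\ref{lem:tree} fails, and no explicit formula for $\alpha$ is available.
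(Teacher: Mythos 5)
Your proposal follows essentially the same route as the paper: a dual backward branching process of ``unknowns'' rooted at site~$3$, a determination tree built from the boundary marks it meets, a comparison of that tree with a Galton--Watson law, and the fixed-point equation $r(\bar\rho-\alpha)+b\alpha(1-\alpha)=0$ obtained by solving the tree generation by generation (your $\zeta=1-\eta$ reformulation, in which a $b$-event replaces $\zeta(2)$ by $\zeta(1)\zeta(2)$, is a clean way to see the recursion the paper's ``solving'' rule encodes). Your last paragraph correctly identifies the crux: a quantitative, uniform-in-$t$ Galton--Watson approximation of the tree, which is exactly what Lemma~\ref{lem:tree} provides via the excursion-insertion scheme of Step~2 of the proof of Lemma~\ref{lem:probdeath}.

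However, there is a genuine error in your extinction estimate (your point~(a), and hence also your point~(c)). You compare the number of live flags with a continuous-time Galton--Watson process of \emph{constant} birth and death rates $bN^{2-\theta}$ and $rN^{2-\theta}$, and deduce a super-polynomially small non-extinction probability $e^{-cN^{(3-\theta)/2}}$ by time $t\geq N^{-\widehat\theta}$. This comparison is invalid and, moreover, goes in the wrong direction: in the true dual process a flag branches or dies \emph{only} while it sits at sites $\{1,2\}$ (resp. $\{N-2,N-1\}$), and a flag in the bulk neither branches nor dies. Since a flag started at site~$3$ spends a fraction of order $1/N$ of its time near the boundary, the effective rate of boundary events is only $\sim N^{1-\theta}$, and the time to the first boundary event has polynomial (not exponential) tails because a bulk excursion can carry the flag deep into $\Lambda_N$. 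Consequently, replacing the true process by your constant-rate GW process makes extinction \emph{faster}, so the comparison gives a lower bound on the non-extinction probability, not the upper bound you need. The paper circumvents this by separating the two scales: the number of boundary marks encountered before extinction, $\kappa(T)$, is dominated by a \emph{discrete-time} subcritical random walk (Corollary~\ref{cor:A3}, using that each boundary event is a death with probability $\geq p>1/2$ by \eqref{eq:deathbound}), giving $\bP(\kappa(T)>c\log N)=O(N^{-\varepsilon_1(c)})$; then the time per generation is controlled only in expectation ($\bE^\dagger_{\{3\}}(\tau)=O(N^{\theta-1})$, Equation~\eqref{eq:tausite3}) and Markov's inequality yields the merely polynomial bound $\bP(T>N^{-\widehat\theta/2})=O(N^{-\varepsilon_2})$. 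The final estimate $O(N^{-\varepsilon})$ still holds, but the bounds feeding into it are polynomial, not super-polynomial, and the proof has to be organised around the discrete generation count rather than a continuous-time rate argument.
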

The first identity is a consequence of Corollary \ref{prop:valL} below. The second identity being strictly analogous, we will not prove it. Note that this result fixes the Dirichlet boundary condition of the discretized density profile $\rho_t^N$. We assume for now that Lemma \ref{lem:res} holds, and with it we will prove Proposition \ref{prop:discapprox}
\begin{proof}[Proof of Proposition \ref{prop:discapprox}]
We claim that  for any $t>0$, there exists  $\varepsilon>0$ such that 
\begin{equation}
\label{eq:gamma}
\sup_{x\in\{3,\dots,N-3\}}|\rho^N_t(x)-\rho_t(x/N)|=O(N^{-\varepsilon}).
\end{equation}
Applying Dynkin's identity  $\partial_t \bE_{\mu_N}\cro{f(\eta_t)}=\bE_{\mu_N}\cro{\mathcal{L}_Nf(\eta_t)}$, to $f(\eta)=\eta(x)$ for any $x\in \{4, \cdots,N-4\}$, elementary computations yield that $\rho_t^N(\cdot)$ is solution of the discrete difference diffusion equation
\begin{equation}
\label{eq:discrho}
\begin{cases}
\partial_t \rho_t^N=\Delta_N\rho_t^N, & \mbox{ for }4\leq x \leq N-4\\
\rho_t^N(3)=\alpha_{t,N},\\
\rho_t^N(N-3)=\alpha_{t,N}',\\
\rho_0^N=\rho_0(\cdot/N) 
\end{cases}
,\end{equation}
where we defined $\alpha_{t,N}=\rho^N_t(3)$ and $\alpha_{t,N}'=\rho_t^N(N-3)$, and 
\[\Delta_N\rho^N(x)=N^2\cro{\rho^N(x+1)+\rho^N(x-1)-2\rho^N(x)}\]
is the discrete Laplacian.  Note that the two identities at the space boundaries are trivial, and ``artificially'' close the equation satisfied by $\rho^N_t$.

Denote by $X_s$ (resp. $B_s$) a continuous time random walk on $\Z$ jumping at rate $N^2$ to each of its neighbors (resp. standard Brownian Motion on $\bR$). We denote by $H^X$ (resp. by $H^B$) the hitting time of the boundary $\{3, N-3\}$ (resp. $\{0,1\}$), and shorten $H^X_t=t\wedge H^X$ (resp. $H^B_t=t\wedge H^B$).  We denote by $\bE^X_x$, and $\bE^B_u$ the expectations w.r.t. the distributions of both $X$ and $B$ starting from $x$ and $u$, respectively.  By Feynman-Kac's formula, we can write for $x\in \{3,\dots,N-3\}$, $u\in [0,1]$  
\[\rho^N_t(x)=\bE^X_x\cro{\rho^N_{t-H^X_t}(X_{H^X_t})} \quad \mbox{ and } \quad \rho_t(u)=\bE^B_{u}\cro{\rho_{t-H^B_t}(B_{H^B_t})} .\]
We can therefore write $|\rho^N_t(x)-\rho_t(x/N)|$ as the sum of three contributions, the first one corresponding to the case where $X_{H_t^X}=3$ (resp. $B_{H_t^B}=0$), the second $X_{H_t^X}=N-3$ (resp. $B_{H_t^B}=1$), and the last one to the case $H_t^X=t$ (resp. $H_t^B=t$). We write these three contributions as 
\[C_l(t,x):=\left|\bE^X_{x}\cro{\rho^N_{t-H^X_t}(3){\bf 1}_{\{X_{H_t^X}=3\}}}-\alpha\bP^B_{x/N}\left(B_{H_t^B}=0\right)\right|,\]
\[C_r(t,x):=\left|\bE^X_{x}\cro{\rho^N_{t-H^X_t}(N-3){\bf 1}_{\{X_{H_t^X}=N-3\}}}-\alpha'\bP^B_{x/N}\left(B_{H_t^B}=1\right)\right|,\]
\[C^*(t,x):=\left|\bE^X_{x}\cro{\rho^N_0(X_t){\bf 1}_{\{H_t^X=t\}}}-\bE^B_{x/N}\left(\rho^N_0(B_t){\bf 1}_{\{H_t^B=t\}}\right)\right|.\]
Given theses notations, we have
\[|\rho^N_t(x)-\rho_t(x/N)|\leq C_l+C_r+C^*.\]
Recall \eqref{eq:Defetheta}. To estimate $C_l$ and $C_r$, we consider two cases, depending on whether $H_t^X$ is in $ [0,t-N^{-\widehat{\theta}}]$ or in $(t-N^{-\widehat{\theta}},t)$. In the first case, thanks to Lemma \ref{lem:res}, $\rho^N_{t-H^X_t}(3)=\alpha+O(N^{-\varepsilon})$.  More precisely, since $\rho^N$ is less than $1$ we  can write 
\[C_l\leq \left|\bP^X_{x}\left(X_{H_t^X}=3 \right)-\bP^B_{x/N}\left(B_{H_t^B}=0\right)\right|+2\bP^X_{x}\left( H_t^X\in (t-N^{-\widehat{\theta}},t) \right)+ O(N^{-\varepsilon}).\]
The second term on the right-hand side above can be estimated according 
to Lemma 3.3, p.12 of \cite{E18}, which yields 
\[\bP^X_{x}\left(H_t^X\in (t-N^{-\widehat{\theta}},t) \right)=O\left(N^{-\widehat{\theta}}t^{-3/2}\right) .\]
Finally, the first contribution , as well as $C^*$, can be estimated by using the Berry Essen inequality, i.e. approximating the random walk by a Brownian motion as in the proof of Lemma  3.3 of \cite{E18}, to obtain for a positive constant $\varepsilon'>0$
\[\left|\bP^X_{x}\left(X_{H_t^X}=3 \right)-\bP^B_{x/N}\left(B_{H_t^B}=0\right)\right|=O(N^{-\varepsilon'})\]
and
\[C^*(t,x)=O(N^{-\varepsilon'}).\] 
The bounds above can be taken independent of $x$. This proves \eqref{eq:gamma} and the proposition.
\end{proof}

Before turning to the two-point correlation function, we state the following technical Lemma, which will be needed in what follows.
\begin{lem}
\label{lem:grad72}
For any $\delta>0$,
\[\sup_{\substack{ \delta N\leq x \leq (1-\delta)N\\t\geq 0}}|\rho_t^N(x+1)-\rho_t^N(x)|\leq O\pa{ (\delta N)^{-1}\log N}.\]
\end{lem}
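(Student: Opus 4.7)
The plan is to exploit the Duhamel/random--walk representation of the discrete profile $\rho_t^N$ coming from \eqref{eq:discrho}, and estimate the discrete gradient of each term separately. Let $X$ be the random walk on $\{3,\dots,N-3\}$ jumping at rate $N^2$ and absorbed at $\{3,N-3\}$, let $p_t^D(x,y)$ be the associated Dirichlet heat kernel, and let $q_s^l(x)\,ds, q_s^r(x)\,ds$ denote the densities of the absorption time $H^X$ at the left boundary $\{3\}$ and right boundary $\{N-3\}$, respectively. Writing $\alpha_{s,N}:=\rho_s^N(3)$ and $\alpha'_{s,N}:=\rho_s^N(N-3)$, equation \eqref{eq:discrho} gives
\begin{equation*}
\rho_t^N(x) = \sum_y p_t^D(x,y)\rho_0^N(y) + \int_0^t q_s^l(x)\,\alpha_{t-s,N}\,ds + \int_0^t q_s^r(x)\,\alpha'_{t-s,N}\,ds.
\end{equation*}
Taking a discrete gradient in $x$ splits $\rho_t^N(x+1)-\rho_t^N(x)$ into three corresponding terms.

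For the bulk term, I would couple two independent random walks $X$, $Y$ starting at $x$ and $x+1$, and let $\tau:=\inf\{s\geq 0:X_s=Y_s\}$ be their first meeting time. The key observation is that $V_s := Y_s-X_s$ is a martingale killed at $0$, so the optional stopping theorem yields $\mathbb{E}[V_t\,\mathbf{1}_{\tau>t}] = V_0 = 1$; together with the $C^1$ regularity of $f_0$ (which gives $|\rho_0^N(y+k)-\rho_0^N(y)|\leq \|f_0'\|_\infty k/N$), this controls the contribution of the event $\{\tau>t\}$ on which both walks are still alive. On the remaining event, where one walk has been absorbed before the other, the contribution is bounded by $\mathbb{P}(\tau>H^X\wedge H^Y\wedge t)$, which for $x\in[\delta N,(1-\delta)N]$ is of order $1/(\delta N)$ by standard potential-theoretic estimates for the two-dimensional walk $(X,Y)$ avoiding the diagonal in a box of side~$\delta N$. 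This yields a bound of order $C/(\delta N)$ for the bulk term.

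For each boundary term, since $|\alpha_{s,N}|,|\alpha'_{s,N}|\leq 1$, one is reduced to the $L^1$-in-time bound
\begin{equation*}
\int_0^\infty |q_s^l(x+1)-q_s^l(x)|\,ds \leq \frac{C\log N}{\delta N},
\end{equation*}
uniform in $x\in[\delta N,(1-\delta)N]$, together with its analog for $q^r$. This would follow from a careful analysis of the spatial derivative of the hitting-time density: on the half-line, the closed form $u/\sqrt{4\pi s^3}\,e^{-u^2/(4s)}$ gives $\int|\partial_x q_s^l|\,ds = O(1/x)$ by a direct computation using that $\int q_s^l(x)\,ds$ is constant in $x$, and on the finite interval $\{3,\dots,N-3\}$ the method of images produces a sum of such half-line contributions. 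The logarithmic correction arises from the cutoff at the lattice scale $s\sim N^{-2}$ when summing the image contributions coming from reflections at the opposite boundary.

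The main obstacle is the uniformity in $t\geq 0$. For $t\leq N^{-2}$ the coupling bound is trivial and one must instead use the $C^1$ regularity of $\rho_0$ combined with $\|\Delta_N\rho^N_s\|_\infty\leq 4N^2$; for large $t$ the profile approaches the linear steady state with gradient $O(1/N)$, which is better than the stated bound; and it is in the intermediate regime, where one has to match the two estimates at $s\sim N^{-2}$, that the logarithmic factor appears. The factor $\delta^{-1}$ tracks the dependence on the distance of $x$ from the boundary, entering through the tail of $H^X\wedge H^Y$ and through the estimate $\int|\partial_x q^l_s|\,ds = O(1/x)$.
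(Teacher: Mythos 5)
Your approach is correct in outline but genuinely different from the paper's. You decompose $\rho_t^N$ via the full Duhamel/first-passage representation into a free-evolution term plus two boundary convolutions, and then estimate each spatial gradient separately: the bulk gradient via a coalescing coupling of two killed walks, and each boundary gradient via an $L^1$-in-time bound on the first-passage-density gradient $\int_0^\infty |q_s^l(x+1)-q_s^l(x)|\,ds$. The paper does something much more economical: it stops the walk at the deterministic time $T_N:=\delta^2 N^2 a_N$ (before it can reach $\{3,N-3\}$, up to a probability $e^{-K/a_N}$), which removes the boundary from the picture entirely; the gradient is then a telescoping sum of differences of a unimodal free transition kernel, giving $|\rho_t^N(x+1)-\rho_t^N(x)|\le 2\,\bP_0[X_{T_N}=0]+O(e^{-K/a_N})$, and the local CLT plus the choice $a_N=K/\log N$ produce the stated bound (in fact $O(\sqrt{\log N}/(\delta N))$). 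So the two proofs locate the $\log N$ in entirely different places: in the paper it comes from balancing the escape probability $e^{-K/a_N}$ against the local-CLT factor $(\sqrt{a_N}\,\delta N)^{-1}$; in yours it is attributed to image-sum/lattice-cutoff effects in the finite-interval hitting density. This attribution is not convincing: both a direct half-line computation (which you invoke) and the eigenfunction expansion for the interval give $\int_0^\infty|\partial_u \hat q^l_s(u)|\,ds=O(1/u)$ \emph{without} a logarithm, and after discretization the boundary term is $O(1/(\delta N))$ --- still an admissible upper bound, but it means your claimed inequality $\int|q_s^l(x+1)-q_s^l(x)|\,ds\le C\log N/(\delta N)$ is unproven as stated and its heuristic derivation via images would need to be replaced by an actual calculation. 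Likewise, your bulk-term coupling should be stated as a coalescing coupling (walks independent until they meet, then identical), not as two fully independent walks, for the cancellation on $\{\tau\le t\}$ and the martingale killing at $0$ to make sense; and the worry about matching estimates at $s\sim N^{-2}$ is a red herring once the decomposition is used uniformly in $t$. The net verdict: your route can be made to work and even yields a slightly sharper bound, but it trades the paper's short self-contained argument (stopping time, unimodal telescoping, local CLT) for two nontrivial technical inputs --- the exit-before-coalescence potential estimate and the TV-in-time bound on the hitting-density gradient --- the second of which you have not established.
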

\begin{proof}[Proof of Lemma \ref{lem:grad72}]
Fix $\delta>0$, and consider once again a random walk $X_s$ on $ \Z$, for which we denote $H_{\{3, N-3\}}$ the hitting time of the boundaries x=3, N-3. Recall that $\rho_t^N(x)$ is solution of  \eqref{eq:discrho}, choose a sequence $a_N$ that vanishes as $N\to\infty$, and define the stopping time 
\[\tau=\tau_{\delta, N}:=\delta^2N^2a_N\wedge H_{\{3,N-3\}}.\] 
By the Feynman-Kac formula, we can write for any $3\leq x \leq N-3$
\[\rho_t^N(x)=\bE_x\left[\rho^N_{t-t\wedge \tau}(X_{t\wedge \tau})\right].\]
A symmetric random walk would typically require a time of order $\delta^2N^2$ to move on a distance $\delta N$, therefore there exists a universal constant $K$ such that  for any $\delta N\leq x \leq (1-\delta)N$, 
\[\bP(\tau=H_{\{3,N-3\}})\leq e^{-K /a_N}.\]
In particular, one can write  from the identity above
\[\rho_t^N(x)=\sum_{y=3}^{N-3}\bE_x\left[\rho^N_{t-t\wedge \tau}(y){\bf 1}_{\{X_{t\wedge \tau}=y, \tau= \delta^2N^2a_N\}}\right]+O(e^{-K /a_N}).\]
In particular, one obtains for $t\leq \delta^2N^2 a_N$
\[|\rho_t^N(x+1)-\rho_t^N(x)|=\frac{1}{N}||\partial_uf_0||_{\infty}+O(e^{-K /a_N}).\]
and for $t\geq \delta^2N^2a_N$
\begin{align*}
|\rho_t^N(x+1)-\rho_t^N(x)|\leq &\sum_{y=3}^{N-3}|\bP_{x+1}\left[X_{\delta^2N^2a_N}=y\right]-\bP_{x}\left[X_{\delta^2N^2a_N}=y\right]|+O(e^{-K  /a_N})\\
=&\sum_{y=3}^{N-3}|\bP_{y}\left[X_{\delta^2N^2a_N}=x+1\right]-\bP_{y}\left[X_{\delta^2N^2a_N}=x\right]|+O(e^{-K /a_N})\\
=&\sum_{y=x+3-N}^{x-3}|\bP_{0}\left[X_{\delta^2N^2a_N}=y+1\right]-\bP_{0}\left[X_{\delta^2N^2a_N}=y\right]|+O(e^{-K /a_N}).\end{align*}
Since $\bP_{0}\left[X_{\delta^2N^2a_N}=y\right]$ is maximal in $0$ and decreasing in $|y|$, most of the terms in the estimate above cancel out, and we obtain 
\[|\rho_t^N(x+1)-\rho_t^N(x)|\leq 2\bP_{0}\left[X_{\delta^2N^2a_N}=0\right]+O(e^{-K /a_N}).\]
By approximating the random walk by a Brownian motion, one shows straightforwardly (cf. Equations (3.2) and (3.5) in \cite{DMP12})  $\bP_{0}\left[X_{\delta^2N^2a_N}=0\right]\leq\pa{\sqrt{2\pi a_N}\delta N}^{-1}$, which, letting $a_N=K/\log N$, proves the result.
\end{proof}

\section{Estimation on the bulk two-point correlation function}
\label{sec:corr}
We now estimate the two-point correlation function defined in \eqref{eq:Defphi}.
\begin{prop}
\label{prop:correlations}
For any positive $t$, and any fixed $\delta>0$
\[
\limsup_{N\to\infty}\sup_{\substack{ 3\leq x\leq (1-\delta)N-1\\ \delta N+1\leq y\leq  N-3}}|\varphi_t(x,y)|=0.
\]
\end{prop}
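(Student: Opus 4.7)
The plan is to mimic the strategy used for $\rho_t^N$ in Section \ref{sec:densityfield}: write $\varphi_t^N$ as the solution of a discrete, artificially closed difference equation on the triangle $T_N=\{(x,y):\,3\leq x<y\leq N-3\}$, and then use a Feynman--Kac representation to estimate it via a two-dimensional random walk, reducing the proof to diagonal and boundary estimates.

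First, applying Dynkin's formula to $\eta(x)\eta(y)$ and subtracting the corresponding equation for $\rho_t^N(x)\rho_t^N(y)$ obtained from \eqref{eq:discrho}, a direct computation shows that on the interior $\{y\geq x+2\}$,
\[
\partial_t\varphi_t^N(x,y)=(\Delta_{N,x}+\Delta_{N,y})\varphi_t^N(x,y),
\]
while on the diagonal $y=x+1$ the SSEP stirring mechanism produces the standard source term
\[
\partial_t\varphi_t^N(x,x+1)=N^2\cro{\varphi_t^N(x-1,x+1)+\varphi_t^N(x,x+2)-2\varphi_t^N(x,x+1)}-N^2\pa{\rho_t^N(x+1)-\rho_t^N(x)}^2.
\]
The equation is not closed at the two spatial boundaries $x=3$ and $y=N-3$, so we treat $\beta_t^N(y):=\varphi_t^N(3,y)$ and $\beta_t^{\prime N}(x):=\varphi_t^N(x,N-3)$ as prescribed data, to be estimated separately.

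Next, introduce the two-dimensional continuous-time Markov chain $(X_s,Y_s)$ on $T_N$ which jumps at rate $N^2$ to each of its four neighbours in the interior and, at a diagonal point $(x,x+1)$, only to $(x-1,x+1)$ or $(x,x+2)$ at rate $N^2$ each (so that the diagonal is a partially reflecting boundary matching the coefficients of the equation above). Let $\sigma$ denote the hitting time of $\partial^{\text{sp}}T_N=\{x=3\}\cup\{y=N-3\}$. By Feynman--Kac,
\[
\varphi_t^N(x,y)=\bE_{(x,y)}\cro{\varphi_{t-t\wedge \sigma}^N(X_{t\wedge\sigma},Y_{t\wedge\sigma})}-N^2\int_0^{t\wedge\sigma}\bE_{(x,y)}\cro{\pa{\rho_{t-s}^N(X_s+1)-\rho_{t-s}^N(X_s)}^2\mathbf{1}_{\{Y_s=X_s+1\}}}ds.
\]
Since $\mu_N$ is a product measure, $\varphi_0^N\equiv 0$, so on $\{t\wedge\sigma=t\}$ the first expectation vanishes; otherwise it reduces to a weighted expectation of $\beta^N$ or $\beta^{\prime N}$ at the hitting point. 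The diagonal integral is handled by combining Lemma \ref{lem:grad72}, which gives $(\rho_{t-s}^N(X_s+1)-\rho_{t-s}^N(X_s))^2=O((\log N)^2/(\delta N)^2)$ uniformly on $X_s\in[\delta N,(1-\delta)N]$ (this is precisely where we use the restriction in the statement, and the probability that the walk exits this strip before time $t$ is uniformly bounded away from $1$ only by a $\delta$-dependent constant), with the classical bound that the occupation time of the diagonal by $(X_s,Y_s)$ in $[0,t]$ is $O(\sqrt{t}/N)$ (the difference $Y_s-X_s$ is a 1D symmetric walk at rate $2N^2$ and the local CLT applies). The resulting diagonal contribution is $O((\log N)^2/(\delta^2 N))$, which vanishes as $N\to\infty$.

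The main obstacle will be controlling the boundary data $\beta_t^N(y)$ and $\beta_t^{\prime N}(x)$. This requires extending the duality/branching machinery of Section \ref{sec:branching} from one unknown site (as for $\rho_t^N(3)-\alpha$ in Lemma \ref{lem:res}) to a pair (boundary site $3$, interior site $y$). The natural strategy is to construct two determination trees, one rooted at $3$ and one at $y$, and exploit assumption \eqref{ass:thetal1} together with the fact that $y$ is at macroscopic distance from the boundary to show that, with high probability, the two trees remain disjoint, and that the one rooted at $3$ becomes extinct before it can reach $y$. Once this decoupling is established, $\bE_{\mu_N}[\eta_t(3)\eta_t(y)]$ factorizes as $\rho_t^N(3)\rho_t^N(y)+o(1)$, giving $\beta_t^N(y)=o(1)$ uniformly for $y\geq \delta N$ and $t\geq N^{-\widehat\theta}$ (and symmetrically for $\beta_t^{\prime N}$); rigorously controlling the joint law of the two trees, including the rare event that the boundary tree grows to macroscopic size, is the delicate technical point of the argument.
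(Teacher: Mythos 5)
Your overall architecture matches the paper's: Dynkin to obtain a discrete difference equation for $\varphi_t^N$, Feynman--Kac via a two-dimensional reflected random walk, a diagonal source term controlled by Lemma~\ref{lem:grad72} and the $O(\sqrt{t}/N)$ diagonal occupation time, and a boundary lemma (essentially Lemma~\ref{lem:bcor}) proved by running two copies of the dual determination process from $3$ and from $y$ and arguing they stay independent until the one from $3$ dies out. That part is correct and is the paper's route.

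There is, however, a genuine gap in your treatment of the two corners of the triangle $T_N$. You set up the random walk on the \emph{full} triangle $T_N=\{3\le x<y\le N-3\}$, but both ingredients you invoke are only valid away from the corners: Lemma~\ref{lem:grad72} controls the gradient of $\rho_t^N$ only on $[\delta N,(1-\delta)N]$, and your decorrelation estimate gives $\varphi_t^N(3,y)=o(1)$ only for $y\ge\delta N$ (indeed, for $y$ close to $3$ the correlation is genuinely $O(1)$, not small). Your stated remedy --- that ``the probability that the walk exits this strip before time $t$ is uniformly bounded away from $1$ only by a $\delta$-dependent constant'' --- does not close the gap: a probability bounded away from $1$ still leaves an $O(1)$ contribution from the corner regions, and the walk, run for a macroscopic time $t$, will in fact leave the strip and hit the boundary near a corner with non-negligible probability. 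What is actually needed is the classical two-dimensional logarithmic hitting estimate: a planar walk started at distance $\delta' N$ from a corner has probability $O(\log\delta'/\log\delta)$ of reaching distance $\delta N$ from that corner before escaping to the opposite side of the domain. The paper exploits precisely this by introducing a second small parameter $\delta<\delta'$, truncating the Feynman--Kac domain to $B_N^\delta$ with ``short'' boundary pieces $S_N^\delta$ near the corners, bounding $\varphi$ on $S_N^\delta$ trivially by $1$, and showing $\bP_{x,y}(H(L_N^\delta)>H(S_N^\delta))\le C\log\delta'/\log\delta$; one then sends $N\to\infty$ first and $\delta\to 0^+$ afterwards. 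Without this auxiliary truncation and the attendant $\log\delta'/\log\delta$ bound your proof does not conclude. The rest of your outline --- the artificially closed equation, the diagonal bound, and the two-tree decoupling argument for the boundary data --- is sound and coincides with the paper.
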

Once again, the main ingredient to prove this result is an estimate of the correlation function at the boundary of the domain above, given by the following Lemma.

\begin{lem}
\label{lem:bcor}
There exists $\varepsilon$ such that for any $\delta>0$,
\[\sup_{\substack{\delta N \leq y\leq N-1\\
t\geq 0}}|\varphi_t^N(3,y)|=O(N^{-\varepsilon}) \quad  \mbox{ and  }\quad  \sup_{\substack{1 \leq x\leq N(1-\delta)\\
t\geq 0}}|\varphi_t^N(x,N-3)|=O(N^{-\varepsilon}) ,\]
where the constant in $O(N^{-\varepsilon})$ can depend on $\delta$.
\end{lem}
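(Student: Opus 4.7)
My plan is to extend the backward-determination-tree construction of Section \ref{sec:branching}---used for the one-point function $\rho_t^N(3)$---to the two-point function $\varphi_t^N(3,y)$. I only treat $\varphi_t^N(3,y)$; the bound on $\varphi_t^N(x,N-3)$ is strictly symmetric. The core idea is to express $\eta_t(3)$ and $\eta_t(y)$ as deterministic functionals of essentially disjoint sources of randomness, so that the product structure of the initial measure $\mu_N$ directly forces the decorrelation.

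Using the Harris graphical representation, I will build two backward determination trees $\cT_t^3$ and $\cT_t^y$, rooted at sites $3$ and $y$ at time $t$ respectively. Each tree evolves backwards in time: in the bulk a particle performs backward SSEP stirring at rate $2N^2$; at sites $1,2$ (resp.\ $N-2,N-1$), particles die at rate proportional to $r$ (resp.\ $r'$) or branch at rate proportional to $b$ (resp.\ $b'$). The leaf sets $A_t^3, A_t^y \subset \Lambda_N$ of these trees, together with the Bernoulli reservoir coins drawn at their death events, deterministically determine $\eta_t(3)$ and $\eta_t(y)$ respectively. By assumption \eqref{ass:thetal1} and the Galton--Watson domination of Lemma \ref{lem:tree}, both branching processes are subcritical; with probability $1-O(N^{-\varepsilon_0})$ for some $\varepsilon_0>0$, each tree has total cardinality $O(N^{\varepsilon_0/2})$ uniformly in $t\geq 0$, and a diffusive lifetime-vs-spread estimate shows that $A_t^3$ is localized in a window of sublinear size near the left boundary. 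For $y\geq\delta N$, the backward walk from $y$ either stays away from this boundary region (so that $A_t^y$ is concentrated near $y$, disjoint from $A_t^3$), or it eventually hits it and then spawns its own subcritical tree with leaves again confined to a sublinear window near the left boundary.

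Combining these localization estimates, one shows $\bP(A_t^3 \cap A_t^y \ne \emptyset) = O(N^{-\varepsilon})$ uniformly in $t\geq 0$ and $y\geq\delta N$; indeed, each leaf set has $O(1)$ elements distributed in a region of sublinear size, so a union bound gives a polynomially small intersection probability. On the event of disjointness, the Poisson events used by the two trees are themselves disjoint in space-time, so the reservoir coins used by $\cT_t^3$ and $\cT_t^y$ are independent, and the leaf sets sample independent initial values under the product measure $\mu_N$. Hence $\eta_t(3)$ and $\eta_t(y)$ are conditionally independent given the trees, so their conditional covariance vanishes. On the complementary event, the integrand $(\eta_t(3)-\rho_t^N(3))(\eta_t(y)-\rho_t^N(y))$ is uniformly bounded by $1$, yielding $|\varphi_t^N(3,y)| = O(N^{-\varepsilon})$ as claimed.

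The hard part will be the uniform-in-$t$ spatial localization of $\cT_t^3$: although \eqref{ass:thetal1} controls the total number of particles via subcriticality, these particles are simultaneously mixed by the fast stirring at rate $N^2$, so a careful lifetime-vs-spread tradeoff---of the type already used to prove Lemma \ref{lem:res} and Corollary \ref{prop:valL}---will be required to ensure that the spread of the tree is genuinely smaller than $\delta N$ uniformly in time. A related subtlety is that to obtain the independence statement in the decorrelation step, one must verify that $\cT_t^3$ and $\cT_t^y$ are disjoint in space-time (not merely that their leaf sets are), so that the independence argument has to be run at the level of the joint Harris construction and not just at the level of the leaves.
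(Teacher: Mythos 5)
Your overall strategy—build determination trees rooted at both sites, show they decouple, and use the product structure of the randomness on the decoupling event—is the same as the paper's. However, the quantitative step you propose to carry it, bounding $\bP(A_t^3\cap A_t^y\neq\emptyset)$ via localization of the \emph{leaf sets}, is not the right quantity and cannot by itself deliver the conclusion. As you yourself note at the end, independence requires that the two dual processes be disjoint \emph{in space-time}: disjointness of $A_t^3$ and $A_t^y$ at a fixed time does not imply that the Poisson marks consumed by the two constructions are disjoint (for instance, on the backward timeline the tree from $y$ could descend to the left boundary and return to the bulk, consuming reservoir marks that the tree from $3$ also reads, while still presenting a leaf set far from $3$). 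Thus the sentence ``On the event of disjointness, the Poisson events used by the two trees are themselves disjoint in space-time'' is false as stated, and your closing ``subtlety'' is in fact the crux, not a side remark. Your proposed case analysis for the $y$-tree (``either stays away from the boundary, or hits it and spawns its own subcritical tree near the boundary'') does not resolve this: in the second case, both trees live near the left boundary and a union bound over $O(1)$ leaves in a sublinear window gives no information about space-time interaction of the full dual processes.

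The paper's proof avoids this by running the dual flag process from $\{3,\ell\}$ jointly, splitting it into descendant sets $\mathcal{A}^{\{3\}}$ and $\mathcal{A}^{\{\ell\}}$, defining $\tau$ as the first time the two come within distance $1$, and coupling to \emph{independent} copies up until $\tau$. The crucial simplification you are missing is that $\mathcal{A}^{\{3\}}$ has a short lifespan: once it is empty it can never interact, so one only needs to rule out a close encounter on the time interval $[0,T^{\{3\}}]$, and Corollary~\ref{cor:A3} gives $T^{\{3\}}=O(N^{-\widehat\theta})$, $\kappa(T^{\{3\}})=O(\log N)$, and containment in $\{1,\dots,N/2\}$, all up to $O(N^{-\varepsilon})$. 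In that short window none of the $O(\log N)$ flags of $\mathcal{A}^{\{3\}}$, nor the flag started at $\ell\geq\delta N$, can travel $\delta N/4$, so $\tau>T^{\{3\}}$ with probability $1-O(N^{-\varepsilon})$; this bounds the covariance directly. There is no need to analyze the branching structure of the $\ell$-tree, nor to track whether it reaches the boundary, because it simply has no time to do so before $\mathcal{A}^{\{3\}}$ has died. To repair your write-up, replace leaf-set disjointness with the event $\{\tau>T^{\{3\}}\wedge t\}$ and invoke Corollary~\ref{cor:A3} plus a standard estimate for the displacement of a rate-$N^2$ walk over a time $N^{-\widehat\theta}$.
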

The proof of this Lemma is also postponed  Section \ref{sec_detfinal}. For now, we assume it holds and prove Proposition \ref{prop:correlations}.
\begin{proof}[Proof of Proposition \ref{prop:correlations}]
Define the set, represented in Figure \ref{fig:zozo}
\begin{figure}
\includegraphics[width=12cm]{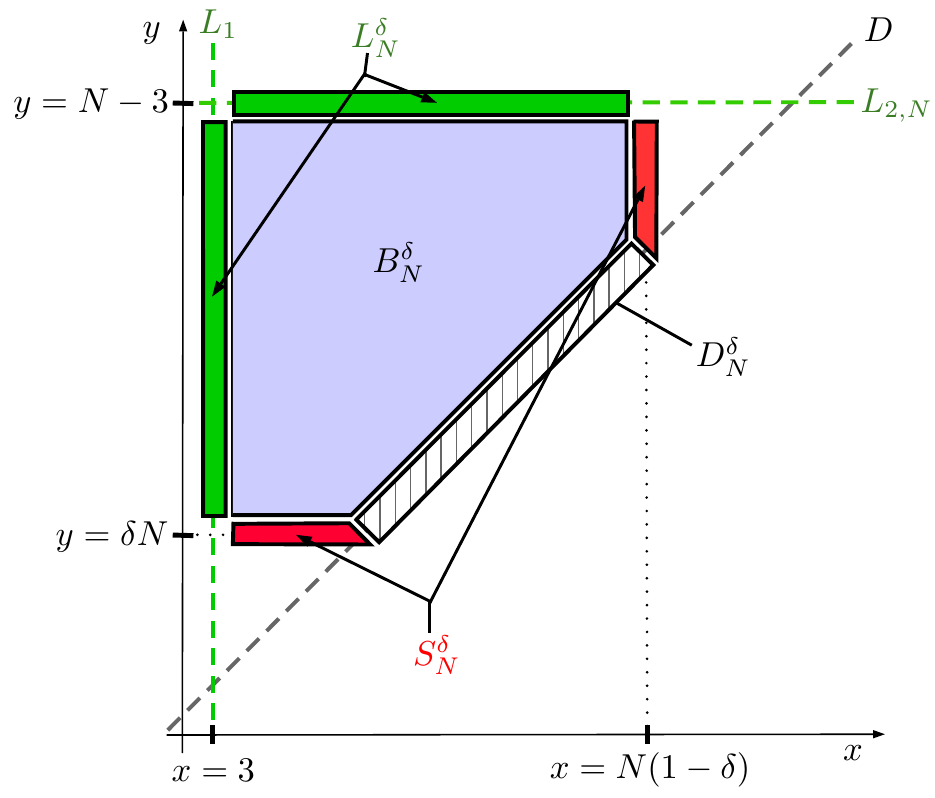}
\caption{}
\label{fig:zozo}
\end{figure}

\[B_N^{\delta}=\Big\{(x,y)\in \llbracket 3,(1-\delta)N-1\}\times \llbracket\delta N+1, N-3\rrbracket, \mbox{ such that } x<y-1 \Big\}.\]
We also define the boundary sets 
\[L_1^{\delta}=\Big\{(3,y) \mbox{ for } y\in \llbracket\delta N, N-3\rrbracket \Big\},\]
\[L_{2,N}^{\delta}=\Big\{(x,N-3)\mbox{ for } x\in \llbracket 3,(1-\delta) N\rrbracket \Big\},\]
\[S_{1,N}^{\delta}=\Big\{((1-\delta)N, y ) \mbox{ for }y \in \llbracket(1-\delta)N+1, N-4\rrbracket \Big\},\]
\[S_{2,N}^{\delta}=\Big\{(x,\delta N) \mbox{ for } x\in \llbracket 4,\delta N -1\rrbracket \Big\},\]
\[D_N^{\delta}=\Big\{(x,x+1) \mbox{ for } x \in \llbracket \delta N, (1-\delta) N\rrbracket \Big\}.\]
Finally, shorten $L_N^{\delta}=L_1^{\delta}\cup L_{2,N}^{\delta}$, $S_N^{\delta}=S_{1,N}^{\delta}\cup S_{2,N}^{\delta}$, and $\partial B_N^{\delta}=L_N^{\delta}\cup S_N^{\delta}$.
Using once again Dynkin's identity $\partial_t \bE_{\mu_N}[f(\eta_t)]=\bE_{\mu_N}[\mathcal{L}_Nf(\eta_t)]$ to $f(\eta)=\eta(x)\eta(y)$ for any $(x,y)\in B_N^\delta$, and since the initial measure for the process is a product measure, elementary computations yield that $\varphi_t^N$ is solution of the discrete difference diffusion equation
\begin{equation}
\label{eq:discphi}
\begin{cases}
\partial_t \varphi_t^N=\Delta_{2,N}\varphi_t^N& \mbox{ for }(x,y)\in  B_N^{\delta}\\
\partial_t \varphi_t^N=\nabla_{2,N}\varphi_t^N +g_{t,N} & \mbox{ for }(x,y)\in  D_N^{\delta}\\
\varphi_t^N=\phi_{t,N}&\mbox{ for }(x,y)\in \partial B_N^{\delta}\\
\varphi_0^N\equiv 0
\end{cases}
.\end{equation}
The operator $\Delta_{2,N}$ stands for the two-dimensional discrete Laplacian 
\[\Delta_{2,N}\varphi_t^N(x,y)=N^2[\varphi_t^N(x+1, y)+\varphi_t^N(x-1, y)+\varphi_t^N(x, y+1)+\varphi_t^N(x, y-1)-4\varphi_t^N(x, y)],\]
the operator $\nabla_{2,N}$ represents the reflection at the diagonal  $D_N^{\delta}$
\[\nabla_{2,N}\varphi_t^N(x,x+1)=N^2[\varphi_t^N(x-1, x+1)+\varphi_t^N(x, x+2)-2\varphi_t^N(x, x+1)],\]
and the function $g_{t,N}$ is defined on $ D_N^{\delta}$ as 
\[g_{t,N}(x,x+1)=- N^2(\rho_t^N(x+1)-\rho_t^N(x+1))^2.\]
We also defined $\phi_{t,N}(x,y)=\varphi^N_t(x,y)$ on $\partial B_N^{\delta}$. Note that once again, the boundary identity is trivial, and ``artificially'' closes the equation satisfied by $\varphi_t^N$. 

\medskip

Consider now  a random walk $X_t$ on $\Z^2$ with generator  $ \nabla_{2,N} $ on 
\[D:=\{(x,x+1), x\in \Z\}\]
and $\Delta_{2,N}$ on $\Z^2\setminus D$. Defined in this way, $X_t$ is a simple symmetric random walk on $\Z^2$ reflected at the diagonal $D$. For any set $E\subset \Z^2$, we denote by $H(E)$ the hitting time of the set $E$, $H(E)=\inf\{t\geq 0, \;X_t\in E\}$, and shorten $H_t(E)=t\wedge H(E)$. Finally, we denote by $\bP_{x,y}$ the distribution of the random walk started from $(x,y)$ and $\bE_{x,y}$ the corresponding expectation. Then, as a consequence of \eqref{eq:discphi} and Feynman-Kac's formula, one obtains for any $t\geq 0$ and $(x,y)\in B_N^{\delta}\cup D_N^\delta $
\begin{equation}
\label{eq:phi}
\varphi_t(x,y)=\bE_{x,y}\cro{\varphi_{t-H_t(\partial B_N^{\delta})}(X_{H_t(\partial B_N^{\delta})})+\int_0^{H_t(\partial B_N^{\delta})}g_{t-s,N}(X_s){\bf 1}_{\{X_s\in D_N^{\delta}\}}ds}.
\end{equation}
Before $H_t(\partial B_N^{\delta})$, the random walk $X_t$ cannot reach $D\setminus D_N^{\delta}$, and $H_t(\partial B_N^{\delta})\leq t$, so that according to Lemma \ref{lem:grad72} the absolute value of the second term above is less or equal than 
\[\bE_{x,y}\cro{\int_0^{H_t(\partial B_N^{\delta})}g^N_{t-s}(X_s){\bf 1}_{\{X_s\in D_N^{\delta}\}}ds}\leq \pa{\frac{C\log N}{\delta}}^2\bE_{x,y}\cro{\int_0^{H_t(\partial B_N^{\delta})}{\bf 1}_{\{X_s\in D_N^{\delta}\}}ds}\]
for some constant $C$. The right hand side can be estimated using the same steps in Section 4.4 of \cite{E18}, after which one obtains straightforwardly that 
\[\bE_{x,y}\cro{\int_0^{H_t(\partial B_N^{\delta})}{\bf 1}_{\{X_s\in D_N^{\delta}\}}ds}=O( N^{-1}\log N).\]
We can now write, using \eqref{eq:phi}, that 
\begin{equation}
\label{eq:phi2}
\varphi_t(x,y)=\bE_{x,y}\cro{\varphi_{t-H_t(\partial B_N^{\delta})}(X_{H_t(\partial B_N^{\delta})})}+O(N^{-1}(\log N)^3\delta^{-2}).
\end{equation}
The function $\varphi$ is uniformly bounded by $1$, vanishes at time $0$, and according to Lemma \ref{lem:bcor} 
\[\sup_{\substack{(x,y)\in L_N^\delta\\ t\geq0}}\varphi_t(x,y)=O(N^{-\varepsilon}),\] 
therefore \eqref{eq:phi2} yields for any $t\geq 0$
\begin{align}
\label{eqphihit}
|\varphi_t(x,y)|&\leq\bP_{x,y}\pa{H_t(\partial B_N^{\delta})=H(S^{\delta}_N)}+ O(N^{-\varepsilon})+O(N^{-1}(\log N)^3\delta^{-2})\nonumber\\
&\leq\bP_{x,y}\pa{H(L^\delta_N)>H(S^\delta_N)}+ O(\delta^{-2}N^{-\varepsilon}).
\end{align}

Define $L_N=L_{1}\cup L_{2,N}$, where $L_{1}=\{(3,y), \mbox{ for } y\in \Z\}$ and $L_{2,N}=\{(x,N-3) \mbox{ for } x\in \Z\}$  are represented in Figure \ref{fig:zozo}. Then one easily checks that 
\begin{figure}
\includegraphics[width=9cm]{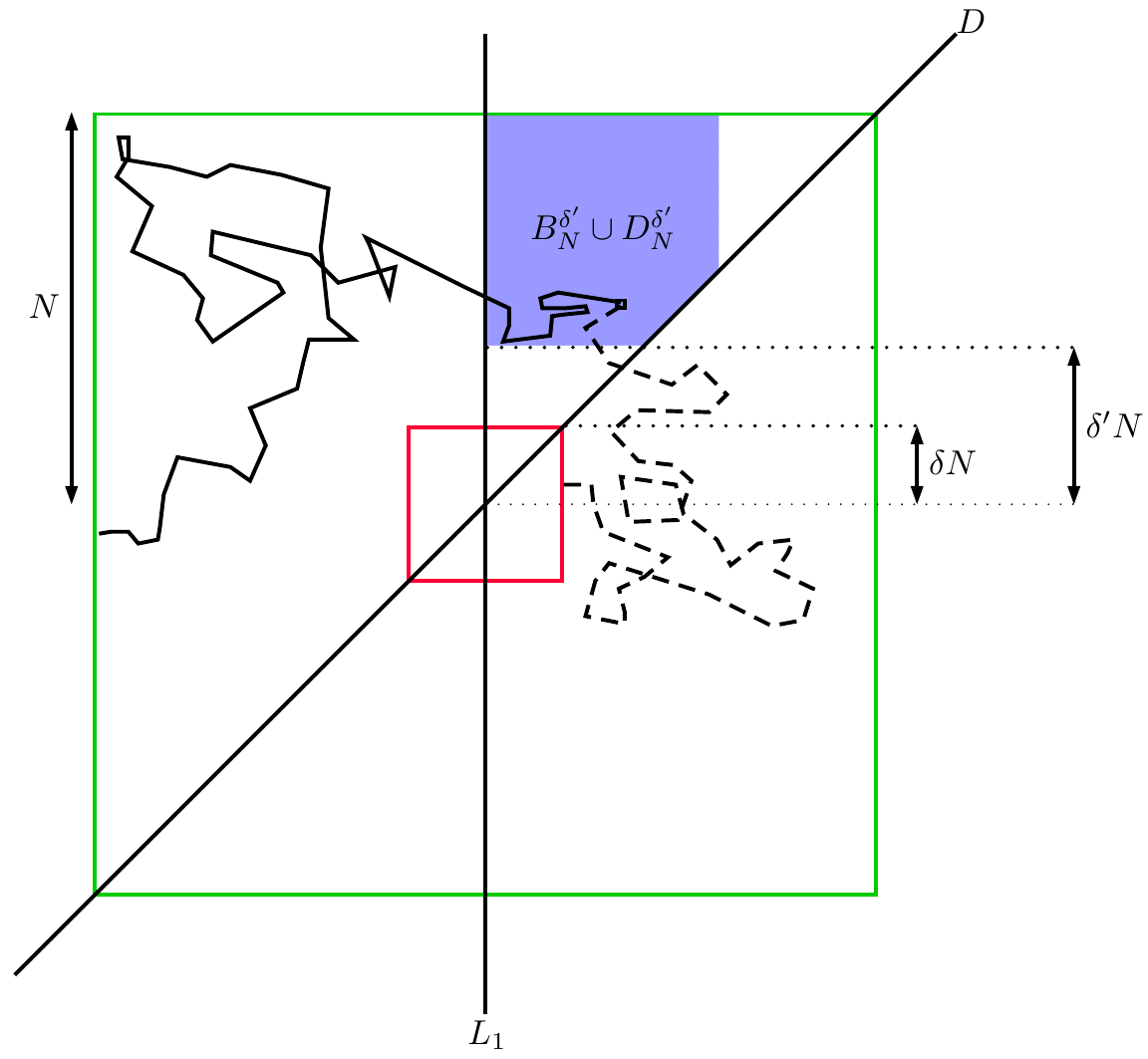}

\caption{A random walk started in $B_N^{\delta'}$ (blue area) has a probability of order $\log(\delta')/\log(\delta)$ of hitting $\Lambda_{\delta N}$ (red boundary) before $\Lambda_N^c$ (green boundary) (dashed trajectory). In particular, as $\delta\to 0$, it will most likely hit the green boundary before the red one (solid trajectory).}
\label{fig:trajectories}
\end{figure}
\begin{equation}
\label{eq:reflections}
\bP_{x,y}\pa{H(L^\delta_N)>H(S^\delta_N)}\leq \sum_{i=1}^2\bP_{x,y}\pa{H(L_N)>H(S^\delta_{i,N})}.
\end{equation}
Both terms in the right hand side above are treated in the same way. Assume, for example, that $i=1$. Then, 
\[\bP_{x,y}\pa{H(L_N)>H(S^\delta_{1,N})}\leq \widetilde{\bP}_{x,y}\pa{H(L_N)>H(S^\delta_{1,N})},\]
where $\widetilde{\bP}_{x,y}$ is the distribution of a random walk $ \widetilde{X}$  is \emph{reflected} both at $D$ and at $L_{2,N}$. Note that a reflected random walk can be mapped into a non-reflecting one by, each time the random walks hits the reflecting line, choosing with probability $1/2 $ a side of the line to carry on with the random walk. Consider $\delta'>\delta$ and assume now that the starting point is in $(x,y)\in B_N^{\delta'}\cup D_N^{\delta'}$. Since the random walk is reflected both at the horizontal boundary $L_{2,N}$, as illustrated in Figure \ref{fig:trajectories}, thanks to the previous observation, the probability $\widetilde{\bP}_{x,y}\pa{H(L_1)>H(S^\delta_{1,N})}$ is less or equal than the probability that a random walk starting at a distance $\delta'N$ of the origin hits $\Lambda_{\delta N}=\{(x,y), x, y\leq \delta_N\}$ before hitting  $\Lambda_{N}^c$, so that for any $(x,y)\in  B_N^{\delta'}\cup D_N^{\delta'}$
\[\widetilde{\bP}_{x,y}\pa{H(L_1)>H(S^\delta_{1,N})}\leq C \frac{\log (\delta' N)-\log N }{\log (\delta N)-\log N}=C\frac{\log \delta' }{\log \delta }.\]
An analogous bound holds for the second term in \eqref{eq:reflections}, i.e. for any $(x,y)\in  B_N^{\delta'}\cup D_N^{\delta'}$,
\[\bP_{x,y}\pa{H(L^\delta_N)>H(S^\delta_{2,N})}\leq C\frac{\log \delta' }{\log \delta }.\]
Plugging this bound into \eqref{eqphihit} yields that for any $(x,y)\in B_N^{\delta'}$, $t\geq 0$, 
\begin{equation*}
|\varphi_t(x,y)|\leq C(\delta)O(N^{-\varepsilon})-\frac{C(\delta')}{\log(\delta)}.
\end{equation*}
so that letting $N\to \infty$ and then $\delta \to 0^+$ proves the result.
\end{proof}

\subsection{Proof of Theorem \ref{th:hyd_thetaless1}}
\label{sec:hydrolimit}
Propositions \ref{prop:discapprox} and \ref{prop:correlations} are enough to prove the hydrodynamic limit. Indeed, fix a smooth test function $G$ and a time $t>0$. We write 
\begin{multline}\bE_{\mu^N}\pa{\cro{\frac{1}{N}\sum_{x=1}^{N-1}G(x/N)\eta_t(x)-\int_{[0,1]}G(u)\rho_t(u)du}^2}\\
\leq 
2\bE_{\mu^N}\pa{\cro{\frac{1}{N}\sum_{x=1}^{N-1}G(x/N)(\eta_t(x)-\rho^N_t(x)}^2}\\
+2\cro{\frac{1}{N}\sum_{x=1}^{N-1}G(x/N)\rho^N_t(x)-\int_{[0,1]}G(u)\rho_t(u)du}^2.\end{multline}
Since $G$ is bounded, the first term on the right-hand side is, for any fixed $\delta'>0$, according to Proposition \ref{prop:correlations} bounded by $2\delta'+O(1/N)+o_N(1)$, so that letting $N\to\infty$  and then $\delta' \to 0$, it vanishes. The second term vanishes as well according to Proposition \ref{prop:discapprox}. Using Markov's inequality concludes the proof.

\medskip

\section{Branching process and estimation of the density at the boundary}
\label{sec:branching}

\subsection{Graphical construction and dual branching process}We now turn to Lemmas \ref{lem:res} and \ref{lem:bcor}, whose proofs were postponed and are presented here.
We follow analogous arguments and constructions to Sections 5.1 and 5.2 in \cite{ELX18}, although significant adaptations need to be made to take into  account the time dependency and the slowed down boundary dynamics. 

We start with a graphical construction of the dynamics. We consider $N+6$ independent Poisson point processes on $\R$ defined as follows :
\begin{itemize}
\item $N-2$ processes $\mathscr{N}_{x,x+1}$, $x\in \llbracket 1, N-2 \rrbracket$ with intensity $N^2$, representing the  exclusion  dynamics.
\item Two processes $\mathscr{N}^+_1$, $\mathscr{N}^+_{N-1}$ with respective intensities $N^{2-\theta}\bar \rho r$ and $N^{2-\theta}\bar \rho'r'$, representing putting a particle at a boundary site $1$, $N-1$ regardless of its previous value.
\item Two processes $\mathscr{N}^-_1$, $\mathscr{N}^-_{N-1}$ with respective intensities $N^{2-\theta}(1-\bar \rho)r$ and $N^{2-\theta}(1-\bar \rho')r'$ representing emptying a boundary site $1$, $N-1$ regardless of its previous value.
\item Two processes $\mathscr{N}^c$, $\mathscr{N}^{c'}$ with respective intensities $N^{2-\theta}c$ and $N^{2-\theta}c'$ representing the copy mechanism.
\item Two processes $\mathscr{N}^b$, $\mathscr{N}^{b'}$ with respective intensities $N^{2-\theta}b$ and $N^{2-\theta}b'$ representing the boundary sites $2$,$N-2$ being filled iff the other boundary site $1$, $N-1$ is occupied.
\end{itemize}
In order to build the process starting from a given configuration $\eta_0$, we order $0<t_1<t_2, \gab{\cdots}$ the points 
\[\{t_i, i\in \bN\setminus\{0\} \}=\pa{\mathscr{N}^\pm_1\cup \mathscr{N}^c\cup \mathscr{N}^b\cup \mathscr{N}^\pm_{N-1}\cup \mathscr{N}^{c'}\cup \mathscr{N}^{b'}\cup\pa{\cup_{x=1}^{N-2}\mathscr{N}_{x,x+1} }}\cap \R_+\]
of these processes, taken in increasing order starting from $0$. Note that since it has probability $0$, we discounted the possibility that two of these processes   contained the same point. We refer to  the points of the processes together with their type as \emph{marks}. For example, if $t\in \mathscr{N}^\pm_{N-1} $, (resp. $t\in \mathscr{N}^b\cup \mathscr{N}^{b'}$) we say that the  \emph{mark} at $t$ is of type $(\pm, N-1)$ (resp. of \emph{branching} type).  If $t\in \mathscr{N}_{x,x+1} $, we say that the  \emph{mark} at $t$ is of type $(x,x+1)$. 
For convenience, we store in a variable $ \boldsymbol{\mathcal{M}} $ the collection of all the marks up to some arbitrary time horizon $T$.

\medskip

With these Poisson point processes, given an initial configuration $\eta_0$, we are able to give a graphical construction of our Markov process $(\eta_t)_{t\geq 0}$. Define $t_0=0$ and $\eta_{t_0}=\eta_0$, and  ``solve'' the marks $t_1, t_2,\cdots$ one by one in the following way. For any $i>0$, the configuration remains constant on $[t_{i-1}, t_i)$. At time $t_i$, the configuration is changed depending on the type of the mark $t_i$. 
\begin{itemize}
\item If  $t_i\in \mathscr{N}_{x,x+1}$, we let $\eta_{t_i}=\eta^{x,x+1}_{t_{i-1}}.$
\item If $t_i\in \mathscr{N}^+_{1}$ (resp. $t_i\in  \mathscr{N}^+_{N-1}$), at time $t_i$, a particle is put at site $1$ (resp. $N-1$). We let $\eta_{t_i}(x)=\eta_{t_{i-1}}(x)$ for any $x\neq 1$ (resp. $x\neq N-1$), and  $\eta_{t_i}(1)=1$ (resp. $\eta_{t_i}(N-1)=1$).
\item If $t_i\in \mathscr{N}^-_{1}$ (resp. $t_i\in  \mathscr{N}^-_{N-1}$), at time $t_i$,  site $1$ (resp. $N-1$) is emptied. We let $\eta_{t_i}(x)=\eta_{t_{i-1}}(x)$ for any $x\neq 1$ (resp. $x\neq N-1$), and  $\eta_{t_i}(1)=0$ (resp. $\eta_{t_i}(N-1)=0$).
\item If $t_i\in \mathscr{N}^c$ (resp. $t_i\in  \mathscr{N}^{c'}$), at time $t_i$, site $2$ (resp. $N-2$) becomes a copy of site $1$ (resp. $N-1$), so that we let $\eta_{t_i}(x)=\eta_{t_{i-1}}(x)$ for any $x\neq 2$ (resp. $x\neq N-2$), and  $\eta_{t_i}(2)=\eta_{t_{i-1}}(1)$ (resp. $\eta_{t_i}(N-2)=\eta_{t_{i-1}}(N-1)$).
\item Finally, if $t_i\in \mathscr{N}^b_{2}$ (resp. $t_i\in  \mathscr{N}^{b'}$), at time $t_i$, we let $\eta_{t_i}(x)=\eta_{t_{i-1}}(x)$ for any $x\neq 2$ (resp. $x\neq N-2$), and  if $\eta_{t_{i-1}}(1)=1$ (resp. $\eta_{t_{i-1}}(N-1)=1$), we let $\eta_{t_i}(2)=1$ (resp. $\eta_{t_i}(N-2)=1$). Otherwise, we let $\eta_{t_i}(2)=\eta_{t_{i-1}}(2)$ (resp. $\eta_{t_i}(N-2)=\eta_{t_{i-1}}(N-2)$).
\end{itemize}
We leave it to the reader to check that this construction yields a Markov process $\eta_t$, with initial state $\eta_0$, and infinitesimal generator $N^2\mathcal{L}_N.$

\subsection{Set of unknown sites}
\label{unknowns}
Fix $x\in \Lambda_N$, and suppose that we want to determine the value of $\eta_t(x) $. To do so, we are going to explore the past of the process, and define a set $\hat{\mathcal{A}}(s)\subset\Lambda_N$, $s<t$  of unknown sites at time $s$ on which depends the value of $\eta_t(x) $. To do so, re-index the marks $t>t_{-1}>t_{-2}>\dots>t_{-\kappa}>0$ in the time interval $[0,t]$, we leave the set $\hat{\mathcal A}(s)$ constant on each of the intervals $[t_{-1},t]$, $[t_{-2}, t_{-1}), \dots$ until time $0$ is reached. For $k<0$, assume that $\hat{\mathcal{A}}(s)$ has been defined down to time $t_k$, i.e. on the whole segment $[t_k, t]$. To define $\hat{\mathcal{A}}(s)$ down to time $t_{k-1}$, we  define it at time $t_k^-$ and leave it constant down to time $t_{k-1}$. Shorten $A:=\hat{\mathcal{A}}(t_k)$ and $A':=\hat{\mathcal{A}}(t_k^-)$. We consider all the possible cases for the type of the mark $t_k$.
\begin{itemize}
\item If  $t_k\in \mathscr{N}_{x,x+1}$, $x\in A$ and $x+1\notin A$  (resp. $x+1\in A$ and $x\notin A$),  before the mark, the content of the unknown site $x$ (resp. $x+1$), particle or hole,  was in $x+1$ (resp. $x$), and we let $A'=A^{x,x+1}:=A\cup\{x+1\}\setminus\{x\}$ (resp. $:=A\cup\{x\}\setminus\{x+1\}$). Otherwise, if both or neither of the two sites are in $A$, we just let $A'=A$.
\item If $t_k\in \mathscr{N}^+_{1}$ (resp. $t_k \in  \mathscr{N}^+_{N-1}$), then at the mark  a particle was placed at site $1$  (resp. $N-1$). Therefore, if site $1$  (resp. $N-1$) was in the set $A$ of unknowns, it can be removed, and we let $A'=A\setminus\{1\}$ (resp.  $A'=A\setminus\{N-1\}$). Else, we let $A'=A$.
\item If $t_k\in \mathscr{N}^-_{1}$ (resp. $t _k\in  \mathscr{N}^-_{N-1}$), then at the mark,  site $1$  (resp. $N-1$) was emptied. Therefore, if site $1$  (resp. $N-1$) was in the set $A$ of unknowns, it is removed, and we let $A'=A\setminus\{1\}$ (resp.  $A'=A\setminus\{N-1\}$). Else, we let $A'=A$.
\item If $t_k \in \mathscr{N}^{c}$ (resp. $t_k\in  \mathscr{N}^{c'}$), then at the mark,  site $2$  (resp. $N-2$) became a copy of site $1$ (resp. $N-1$) so that if $2\in A$ (resp. $N-2\in A$), we let $A'=A\cup\{1\}\setminus\{2\}$ (resp. $A'=A\cup\{N-1\}\setminus\{N-2\}$). Else, we let $A'=A$.
\item Finally, if $t_k\in \mathscr{N}^b$ (resp. $t_k\in  \mathscr{N}^{b'}$), then at the mark $t_k$, something might have happened at site  $2$  (resp. $N-2$) if and only if site  $1$  (resp. $N-1$) was occupied. Else, nothing happened, so that we need to keep track of both of the sites $1$ and $2$ to know for sure the value at time $t$.  More precisely, if $2\in A$ (resp. $N-2\in A$), we then let $A'=A\cup \{1\}$ (resp.  $A'=A\cup \{N-1\}$). Else, we let $A'=A$.
\end{itemize}
We then  iterate the process to build $\hat{\mathcal{A}}(s)$ backwards in time. Recall that elements of $\hat{\mathcal{A}}$ are \emph{sites} whose value is unknown. To avoid ambiguity, we will refer to the elements of $\hat{\mathcal{A}}$ as \emph{flags}. Once time $0$ is reached, all the remaining flags in $\hat{\mathcal{A}}$ are determined by the initial configuration $\eta_0$, so that we let $\hat{\mathcal{A}}(0)=\emptyset$.
We say that a mark $t_k$ \emph{affected} $\hat{\mathcal{A}}$ if at least one of the sites concerned by the mark was in $ \hat{\mathcal{A}}(t_k)$. 
For example, a mark at time $t_k$ of type $(x,x+1)$ affected $\hat{\mathcal{A}}$ iff. $\hat{\mathcal{A}}(t_k)\cap\{x,x+1\}\neq \emptyset$.

When a mark $t_k$ is of type $b$ (resp. $b'$) and  $2\in \hat{\mathcal{A}}(t_k)$ (resp. $N-2\in \hat{\mathcal{A}}(t_k)$), 
the process \emph{branched} and a flag is \emph{created} in $\hat{\mathcal{A}}$ (we recall that the process is constructed backwards in time).
Note that this is the only way for the cardinal of $ \hat{\mathcal{A}}$ to \emph{increase}.  
When a mark $t_k$ is of type $(\pm,1)$ (resp. $(\pm, N-1)$) and  $1\in \hat{\mathcal{A}}(t_k)$ (resp. $N-1\in \hat{\mathcal{A}}(t_k)$), 
a flag is \emph{deleted} in $\hat{\mathcal{A}}$. Another way for the cardinal of $\hat{\mathcal{A}}$ to decrease is if the process meets a mark of type $c$ or $c'$ either at site $2$ or $N-2$, while site $1$ or $N-1$ was also in $\hat{\mathcal{A}}$, in which case site $2$ (resp. $N-2$) is removed from $\hat{\mathcal{A}}$, while site $1$ (resp. $N-1$) remains. The last way for the cardinal of $ \hat{\mathcal{A}}$ to decrease is hitting time $0$ in which case all remaining flags are removed.

We will call  \emph{boundary marks} the marks that make the cardinal of $\mathcal{A}$ change. Note, in particular, that not all marks of type $c$ or $c'$ are boundary marks, only those who occur while both sites of the boundary are flagged. In section \ref{label:determination}, we explain how to recover from the process's value $\eta_x(t)$ from the process $ \hat{\mathcal{A}}$ and the marks who affected it. For now, however, we study the process  $ \hat{\mathcal{A}}$ itself.

\subsection{Markov flag processes}

Recall that $\hat{\mathcal{A}}(s)$ is defined on $[0,t]$, observed backwards in time, and is right-continuous. For simplicity, we want to observe the process $\hat{\mathcal{A}}(s)$ forward in time, so that we define $(\mathcal{A}(s))_{s\in [0,t]}$ the right-continuous version of  $\hat{\mathcal{A}}(-s)$: we set $\mathcal{A}(s)=\hat{\mathcal{A}}(-s)$ on $[0,t]$ except at the time of the marks $t_k$, at which $\mathcal{A}(t_k)=\mathcal{A}(t_k^+)$. Since the Poisson point processes forward and backward in time have the same distribution, one easily checks that up until time $t$ (forward in time) $\mathcal{A}(s)$ is a Markov process on $\mathcal{P}(\Lambda_N)$ (the set of subsets of $\Lambda_N$) with generator $\mathcal{L}^\dagger_N=\mathcal{L}^{\dagger, 1}_N+\mathcal{L}^{\dagger, 2}_N$ acting on functions $f:\mathcal{P}(\Lambda_N)\to\R$. The generator $\mathcal{L}^{\dagger, 1}_N$ represents the flag's motion, either due to the stirring dynamics or the copy mechanism,
\begin{multline*}
\mathcal{L}^{\dagger, 1}_N f(A)=N^2\sum_{x=1}^{N-2}\pa{f(A^{x,x+1})-f(A)}+cN^{2-\theta}{\bf 1}_{ \{2\in A\}}\pa{f(A\cup \{2\}\setminus\{1\})-f(A)}\\+c'N^{2-\theta}{\bf 1}_{\{N-2\in A\}}\pa{f(A\cup \{N-2\}\setminus\{N-1\})-f(A)}.
\end{multline*}
whereas $\mathcal{L}^{\dagger, 2}_N$ represents the branching (creation) and death (deletion) events at the boundary,
\begin{multline}
\label{eq:DefgenAB}
\mathcal{L}^{\dagger, 2}_N f(A)=rN^{2-\theta}{\bf 1}_{ \{1\in A\}}\pa{f(A\setminus\{1\})-f(A)}+r'N^{2-\theta}{\bf 1}_{\{N-1\in A\}}\pa{f(A\setminus\{N-1\})-f(A)}\\
+bN^{2-\theta}{\bf 1}_{ \{2\in A\}}\pa{f(A\cup\{1\})-f(A)}+b'N^{2-\theta}{\bf 1}_{\{N-2\in A\}}\pa{f(A\cup \{N-1\})-f(A)}.
\end{multline}
In the expression of $\mathcal{L}^{\dagger, 1}_N $, we denoted 
\[A^{x,x+1}=
\begin{cases}
A&\mbox{ if }x, x+1\notin A\mbox{ or }x, x+1\in A\\
A\setminus\{x\}\cup\{x+1\}&\mbox{ if }x\in A,\; x+1\notin A\\
A\setminus\{x+1\}\cup\{x\}&\mbox{ if }x+1\in A,\; x\notin A.
\end{cases}
\]
the set $A$ after exchange of sites $x$ and $x+1$.  We also define
\begin{multline*}
\widetilde{\mathcal{L}}^{\dagger, 1}_N f(A)=N^2\sum_{x=1}^{N-2}\pa{f(A^{x,x+1})-f(A)}+cN^{2-\theta}{\bf 1}_{ \{2\in A\}}\pa{f(A^{1,2})-f(A)}\\+c'N^{2-\theta}{\bf 1}_{\{N-2\in A\}}\pa{f(A^{N-1,N-2})-f(A)},
\end{multline*}
which is almost identical to $\mathcal{L}^{\dagger, 1}_N$, except that when $A$ contains either $\{1,2\}$ or  $\{N-2,N-1\}$, and a mark of type $c$ or $c'$ occurs. In this case, instead of removing the flag at site $2$ or $N-2$, $\widetilde{\mathcal{L}}^{\dagger, 1}_N$ simply switches the two flags at the boundary. 
\begin{rem}
We remark that the generator $\widetilde{\mathcal{L}}^{\dagger, 1}_N$ above does \emph{not} allow branchings, which are the only way for the flags to increase. Moreover, since in the particular case of the copies mentioned above we do not remove flags but exchange them, this generator \emph{conserves} the number of flags.
\end{rem}

In what follows, we forget the time horizon $t$, and consider two Markov processes $\mathcal{A}(s)$ (resp. $\mathcal{B}(s)$) started from a set $A\subset\Lambda_N$, and driven by the generator $\mathcal{L}^\dagger_N$ (resp. $\widetilde{\mathcal{L}}^{\dagger,1}_N$). Recall the construction using the Poisson point processes introduced above, since they are equivalent, we will alternatively use both descriptions (as Markov processes, or as functions of the Poisson point processes) for the two processes $\mathcal{A}$ and $\mathcal{B}$. Recall that we refer to any element of the processes $\mathcal{N}^b$ or $\mathcal{N}^\pm$ as \emph{boundary marks} (For simplicity, we will often ignore the boundary marks of type $c$ or $c'$, we will prove later on that they occur with very small probability). The process $\mathcal{B}$ therefore evolves as $\mathcal{A}$, except that the marks that would branch or  delete flags have no effect. Note in particular that until the first time $\tau>0$ that $\mathcal{A}$ is affected by a boundary mark, we have $\mathcal{A}(s)=\mathcal{B}(s)$ for any $s<\tau$. We denote $\bP^\dagger_A$ the joint distribution of those two processes started from the set $A$  and $\bE^\dagger_A$ the corresponding expectation.

Fix a starting set $A\subset \Lambda_N$,  and give each element of $A$ an arbitrary and unique label  $n$, $n=1, \dots, |A|$. For $1\leq n\leq |A|$, we follow the evolution of the flags in $\mathcal{A}$ and $\mathcal{B}$, and denote by $X_n(t)$ (resp. $Y_n(t)$) the position of the flag labeled $n$ in $\mathcal{A}(t)$ (resp. $\mathcal{B}(t)$). Each time a new flag appears in $\mathcal{A}$, we label it by the smallest integer not used previously. Whenever a mark $(x,x+1)$ occurs while both $x$ and $x+1$ are in $\mathcal{A}$ or $\mathcal{B}$, we switch the labels of the flags at $x$ and $x+1$. In other words,
\[\mathcal{A}(t)=\{X_n(t), \; n\in K(t)\},\quad  \mbox{ and }\quad \mathcal{B}(t)=\{Y_n(t), \; n\leq |A|\}\]
where $K(t)$ is the set of flag labels currently present at time $t$ in $\mathcal{A}$. Further note that, fixed a label $ n $,  $Y_n$ is a random walk on $\Lambda_N$ reflected at the boundary, jumping at rate $N^2$ to each neighbor, and at an extra rate $cN^{2-\theta}$ (resp. $c'N^{2-\theta}$) from $2$ to $1$ (resp. $N-2$ to $N-1$). The processes $X_n$ have the same distribution, except that they branch when they meet a branching mark (i.e. at rate  $bN^{2-\theta}$, when $ X_n=2$, and at rate $b'N^{2-\theta}$ when $X_n=N-2$), and die when they meet a mark $\pm$ (i.e. at rate  $rN^{2-\theta}$ when $X_n=1$ and at rate $r'N^{2-\theta}$ when $X_n=N-1$). They can also die when a mark of type $c$, $c'$ occurs while another flag is in the same boundary, but since we will prove the probability that this happens during $\mathcal{A}$'s lifespan is very small, we do not detail this possibility.

\medskip

We first focus on the process $\mathcal{B}$. Denote by  $\tau_n$ the first time at which the flag  labeled $n$ in $\mathcal{B}$ meets a boundary mark. Denote $\tau=\tau_1\wedge\tau_2\wedge\dots\wedge \tau_m$ the first time a flag meets a boundary mark. For $n\leq |A|$, we define
\[C=\{\mbox{At time $\tau$, there was only one flag at the left boundary}\}=\{|\mathcal{B}(\tau)\cap\{1,2\}|=1\}\]
and
\[D_\pm=\{\mbox{The mark encountered at time $\tau$ was of type $\pm$}\}.\]
We start with a technical Lemma, which is the main ingredient to prove Lemmas \ref{lem:res} and \ref{lem:bcor}.
\begin{lem}
\label{lem:probdeath}
Define $\nu_\theta=\min(\theta, 1-\theta)/4$. For any $ A\subset\{1,\dots, N/2\}$ containing site $3$, and any $1\leq n\leq |A|$
\begin{equation}
\label{eq:Oktaun}
\bP^\dagger_{A}(C\cap D_\pm\cap \{\tau=\tau_n\} )= \frac{r \rho_\pm}{r+b}\bP^\dagger_{A}(\tau=\tau_n )+|A|^2O(N^{-\nu_\theta}),
\end{equation}
where $\rho_+=\bar\rho$ and  $\rho_-=1-\bar\rho$. Furthermore, for any $A\subset \Lambda_N$
\begin{equation}
\label{eq:deathbound}
\bP^\dagger_{A}(D_+\cup D_- )\geq p +|A|^3O(N^{-\nu_\theta}),
\end{equation}
where 
\[p:=\min\left\{\frac{r}{r+b},\frac{r'}{r'+b'}\right\}>\frac{1}{2}.\] 
In the identities above, the $O(N^{-\nu_\theta})$ can be chosen uniform in the initial set $A$ satisfying the relevant conditions.
\end{lem}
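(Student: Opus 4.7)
I focus on \eqref{eq:Oktaun}; the bound \eqref{eq:deathbound} will follow by summing over $n$. Since $A\subset\{1,\dots,N/2\}$ and each flag $Y_n$ in $\mathcal{B}$ performs a symmetric random walk of jump rate $N^2$ with boundary reflection (plus negligible $c$-rate transitions), a local-time computation gives $\tau\sim N^{2\theta-2}$: the expected occupation of the boundary sites up to time $t$ is of order $\sqrt t/N$, and marks occur at rate $N^{2-\theta}$ when the flag is there. On this time scale each flag wanders a distance $O(N^\theta)\ll N$, so a Gaussian tail argument yields that with probability at least $1-e^{-cN^{1-\theta}}$ no flag crosses the middle of the bulk before $\tau$. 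This reduces the analysis to the left boundary: on the event $\{\tau=\tau_n\}$ one has $Y_n(\tau)\in\{1,2\}$, and the mark is either of type $\pm$ (at site $1$) or of type $b$ (at site $2$).

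The core step is a single-flag competing-exponentials computation. The first boundary mark of $Y_n$ is selected among three Poisson clocks---rate $\bar\rho\, rN^{2-\theta}$ ($+$) and $(1-\bar\rho)rN^{2-\theta}$ ($-$) at site $1$, and rate $bN^{2-\theta}$ ($b$) at site $2$---which gives
\[\bP^\dagger_A\pa{D_\pm \mid \tau=\tau_n}\;=\;\rho_\pm\,\frac{r\,T_1(n)}{r\,T_1(n)+b\,T_2(n)},\]
where $T_j(n):=\bE^\dagger_A\cro{\int_0^{\tau}\mathbf{1}_{\{Y_n(s)=j\}}\,ds \mid \tau=\tau_n}$. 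The target main term $r\rho_\pm/(r+b)$ then reduces to the \emph{local equilibration} estimate $T_1(n)/T_2(n)=1+O(N^{-\nu_\theta})$. I would derive this from a separation of time scales: the reflected random walk equilibrates between sites $1$ and $2$ on time scale $N^{-2}$ and performs $\sim N^{2\theta}$ such equilibration cycles during $\tau\sim N^{2\theta-2}$. By coupling $Y_n$ with a stationary reflected walk and using spectral/Green's-function estimates, the incomplete local mixing contributes an error of order $N^{-\theta/2}$ while the global non-equilibrium of the initial position contributes $N^{-(1-\theta)/2}$; a second-moment argument then produces the $\min(\theta,1-\theta)/4=\nu_\theta$ exponent.

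For the event $C$ (a single flag at the left boundary), I would use that the labelled flags in $\mathcal{B}$ behave as $|A|$ independent reflected random walks, since the collision-exchange rule removes the exclusion. Conditional on $\{\tau=\tau_n\}$ and on $Y_n(\tau)\in\{1,2\}$, the remaining $|A|-1$ flags are independent of $Y_n$, each lies in $\{1,2\}$ with probability $O(1/N)$ by the same occupation estimate, and a union bound gives an error $O(|A|/N)$ for the complement of $C$, absorbed in the quoted $|A|^2 O(N^{-\nu_\theta})$. Once \eqref{eq:Oktaun} and its right-boundary analogue are established, \eqref{eq:deathbound} follows by summing over $n\leq|A|$ and over left/right boundaries, using $\sum_n\bP^\dagger_A(\tau=\tau_n)=1$ (since $\tau<\infty$ a.s.): the sum of lower bounds is at least $\min(r/(r+b),r'/(r'+b'))=p$, up to a cumulated error $|A|\cdot|A|^2 N^{-\nu_\theta}=|A|^3 N^{-\nu_\theta}$.

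The principal obstacle will be the quantitative local equilibration in the single-flag step. While the scale-separation intuition is transparent, the precise $N^{-\nu_\theta}$ rate requires sharp hitting-time and Green's-function estimates on the reflected random walk, uniformly in the two degenerate regimes $\theta\to 0^+$ (too few local equilibration cycles before a mark) and $\theta\to 1^-$ (the mark time approaches the global mixing time, so the near-boundary distribution is no longer close to locally stationary). A secondary technicality is the bookkeeping of the $c,c'$ marks in $\mathcal{B}$, which can move a flag from site $2$ to site $1$ and thus bias $T_1/T_2$; their contribution is of order $N^{-\theta}$ and is absorbed in the error term.
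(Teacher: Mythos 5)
Your proposal takes a genuinely different route from the paper, and the central step is presented as exact when it is only an approximation; this is the gap. The paper does not attempt to compare occupation times $T_1,T_2$ of sites $1$ and $2$. Instead it inserts \emph{independent boundary excursions} into $\mathcal B'$ whenever a flag reaches site $2$ alone, so that the conditional law of the first boundary mark can be computed \emph{exactly} within one excursion, yielding the closed-form probabilities $p^N_\pm=\frac{(c+N^\theta)r\rho_\pm}{(c+N^\theta)r+b(r+N^\theta)}=\frac{r\rho_\pm}{r+b}+O(N^{-\theta})$ of display \eqref{outcome}. Your identity
$\bP^\dagger_A(D_\pm\mid\tau=\tau_n)=\rho_\pm\, r T_1(n)/(rT_1(n)+bT_2(n))$
is not a competing-exponentials identity; the hazard rates switch on and off as $Y_n$ moves between sites $1$ and $2$, so the probability that the first mark is of a given type depends on the \emph{order} in which local time is accumulated, not only on the terminal occupation times. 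It becomes a valid \emph{adiabatic} approximation precisely because the walk alternates at rate $N^2$ while marks appear at rate $N^{2-\theta}$, but proving this quantitatively (with a rate, uniformly in $n$ and $A$) is exactly the work you defer to "sharp hitting-time and Green's-function estimates." The paper's excursion-insertion device is designed to avoid this analysis altogether: each inserted excursion is a finite Markov chain on $\{1,2,3\}$ with explicit transition rates, and $p_\pm^N$ drops out by direct calculation, with the only real work being to show that, with probability $1-|A|^2O(N^{-\nu_\theta})$, the first boundary mark actually falls inside one of the inserted excursions (Steps 2--4).

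The second gap is the treatment of $C^c$. You bound the probability that another flag is in $\{1,2\}$ at time $\tau$ by $O(|A|/N)$ via a marginal occupation estimate at a fixed time, but $\tau$ is a random time depending on the whole ensemble $\{\tau_m\}_m$, and the conditioning on $\{\tau=\tau_n\}$ is not innocent for the other flags. This is exactly the content of the paper's Step 4, which bounds the \emph{Lebesgue measure} of the time set $I^2_t\cup I^3_t$ where two flags are simultaneously near the boundary, and then uses that boundary marks occur at rate $N^{2-\theta}$ to conclude. That argument yields only $|A|^2O(N^{-\nu_\theta})$, not $O(|A|/N)$, and it is the dominant error in \eqref{eq:Oktaun}; your proposal underestimates where the $N^{-\nu_\theta}$ rate comes from. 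Relatedly, your heuristic $\tau\sim N^{2\theta-2}$ is not what the paper proves: the i.i.d.-excursion argument of Step 1 gives $\bE^\dagger_{\{3\}}(\tau)=O(N^{\theta-1})$, which is the correct order for the \emph{mean} (the $\sqrt t/N$ local-time heuristic captures a typical scale, but the expectation is dominated by long bulk excursions of the reflected walk); the paper only ever uses this upper bound on $\bE(\tau)$, through Markov's inequality. Your passage from \eqref{eq:Oktaun} to \eqref{eq:deathbound} by summation over labels and boundaries matches the paper's Step 6 in spirit, though the paper also needs to discard flags that start far from either boundary before applying the left/right estimates.
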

Although this result involves significant technical difficulties, its content is fairly simple, so that before proving it, we briefly explain those two identities. Recall that we ultimately want to estimate the boundary densities (Lemma \ref{lem:res}) and two-points correlations (Lemma \ref{lem:bcor}), so that we need to examine the value of $\eta_t(3)$ and $\eta_t(3)\eta_t(x)$ for $x$ of order $\delta N$. For the first identity, we concentrate on sets contained in the left half of the system and containing site $3$ to avoid waiting for a long time for a flag to get close to a boundary, and to know with high probability that the next boundary mark is gonna be encountered at the left boundary. Equation \eqref{eq:Oktaun} states that for any $n$, assuming that the next flag to encounter a boundary mark is labelled $n$, the probability that this boundary mark is of type $+$ (resp. $-$)  converges to $\bar\rho r/(r+b)$ (resp. $(1-\bar\rho )r/(r+b)$). The second identity states that at the next boundary mark encountered, the probability that the affected flag branches rather than dies is less than $1/2$, which ultimately ensures that w.h.p. the branching process ultimately dies out after branching a finite number of times (cf. Corollary \ref{cor:A3}).

\begin{proof}[Proof of Lemma \ref{lem:probdeath}]
This proof being a little intricate, we split it in several steps.

\medskip

\noindent{\bf{Step 1: a crude estimate on $\tau$. }} Recall \eqref{eq:Defetheta}. We first claim
\begin{equation}
\label{eq:Etau}
\bP^\dagger_A(\tau\geq N^{-\widehat{\theta}})=O(N^{-\widehat{\theta}})
\end{equation} 
for any set $ A$ containing site $3$: Fix such a set, and without loss of generality label $1$  the flag at site $3$. The proof of this first claim follows the same steps as Lemma 5.3 of \cite{ELX18}, which we adapt to the slowed-down rates at the boundary. To prove \eqref{eq:Etau}, first note that 
\[\bE^\dagger_A(\tau)\leq \bE^\dagger_A(\tau_1)= \bE^\dagger_{\{3\}}(\tau).\]
Consider therefore a single flag initially at site $3$, $Y_1(t)$ its position in $ \mathcal{B}(t)$. This flag performs excursions away from site $3$, either in the bulk $\{4,\dots,N-1\}$, or in the left boundary $\{1,2\}$, in which case it has a  probability of order $N^{-\theta}$ of encountering a boundary mark, after which it ultimately gets back to site $3$. By the Markov property, all the excursions of the flag are i.i.d. in $\mathcal{B}$.
In what follows, we will neglect the possibility that the flag meets a boundary mark at the right boundary, which would only decrease $\tau$ and is therefore not an issue.  Denote $t_i$ the time of the $i$-th visit to site $3$, i.e. $t_0=0$, and 
\[t_i=\inf\{t>t_{i-1}, \quad Y_1(t)=3, \quad  Y_1(t^-)\neq 3\}.\]
For $i>0$,  denote by $E_i$ the event 
\[E_i=\{\mbox{The flag met a left boundary mark during the $i$-th excursion $[t_{i-1},t_i )$}\}.\]
Since the excursions are i.i.d., so are the $E_i$'s and $t_i-t_{i-1}$'s, and it is elementary to show (cf.  Lemma 5.3 of \cite{ELX18}) that $\bE^\dagger_{\{3\}}(t_i-t_{i-1})=\bE^\dagger_{\{3\}}(t_1) =O(N^{-1})$ (the excursions in the boundary last a time of order $N^{-2}$, whereas those in the bulk last a time of order $N^{-1}$). We can now write 
\[\bE^\dagger_{\{3\}}(\tau)=\sum_{i>0}\bE^\dagger_{\{3\}}\pa{\tau{\bf 1}_{\{E_i\mbox{ and } E_j^c\mbox{ for }j<i\}} }\leq\sum_{i>0}i \bE^\dagger_{\{3\}}(t_1)\bP^\dagger_{\{3\}}\pa{E_i\mbox{ and } E_j^c\mbox{ for }j<i}\].
There exists a constant $c=c(r,b, c)$ such that each boundary excursion has a probability $\bP^\dagger_{\{3\}}(E_1)\geq cN^{-\theta}$ of encountering a boundary mark. This finally yields as wanted
\begin{equation}
\label{eq:tausite3}
\bE^\dagger_{\{3\}}(\tau)=O(N^{\theta-1}),
\end{equation}
the factor $ N^{\theta}$ being the expectation of the geometric number of excursions necessary before meeting a boundary mark.
This together with Markov's inequality proves \eqref{eq:Etau} uniformly for any set $A$ containing site $3$.

\medskip
\noindent{\bf{Step 2: inserting independent excursions at the left boundary.}}
Because of the subtle correlations between the flag's motions and the marks they encounter, we detail this step, which is one of the main novelties w.r.t. \cite{ELX18}. We will call \emph{excursion at the left boundary} a pair $(\tau_Z, Z(s), 0 \leq s\leq \tau_Z),$ satisfying 
\begin{enumerate}
\item $\tau_Z>0$, $Z_0=2$, for any $s<\tau_Z$, $Z(s)\in \{1,2\}$ and $Z_{\tau_Z}=3$.
\item $(Z(s))_{0 \leq s\leq \tau_Z}$ is right-continuous.
\end{enumerate}
In order not to burden the notations, when referring to an excursion, we will simply denote it $Z$, the stopping time $\tau_Z$ will be implicitly associated with it. 

\medskip

Fix a realization of the marks $ \boldsymbol{\mathcal{M}} $ on $\Lambda_N$, on which we  build $\mathcal{A}$ and $\mathcal{B}$. On the same probability space, fix an i.i.d. family  of boundary excursions $(Z^j)_{j\in \Z}$, each distributed as 
\[\bP(Z^j\in \cdot)=\bP^\dagger_{\{2\}}\pa{(Y_1(s))_{0\leq s\leq H_3}\in \cdot},\] 
where $H_3=\inf\{s\geq 0, Y_1(s)=3\}$.  To each excursion $Z^j$, we also associate boundary marks distributed according to independent Poisson point processes $\mathcal{M}^j$ on $[0, \tau_{Z^j}]$ with the same intensity as the ones appearing in $\bP^\dagger$. We take the family of excursions and their associated boundary marks independent of  $ \boldsymbol{\mathcal{M}} $. We are going to build a second process $\mathcal{B}'$, distributed as $ \mathcal{B}$, using both the marks in $ \boldsymbol{\mathcal{M}} $ and these independent excursions $Z^j$. Fix a set $A\subset\{1,\dots,N/2\}$ containing site $3$, and  set $\mathcal{B}'(0)=A$. 
The two processes $\mathcal{B}$, $\mathcal{B}'$ then follow the marks in $ \boldsymbol{\mathcal{M}} $ (recall that the boundary marks do not affect the evolution of either $\mathcal{B}$ or $\mathcal{B}'$), until the first time\gab{,} $t_1$\gab{,} that a flag is at site $2$ while no other flag is at a distance less than $\ell_N:=(\log N)^4$ from the left boundary, 
\[t_1=\inf\{ s\geq 0,  \;\mathcal{B}'(s)\cap\{1,\dots,\ell_N\}=\{2\}\}.\]
After time $t_1$, we let all the flags in $\mathcal{B}'$ evolve according to the marks, except the flag that was at site $2$ at time $t_1$, which then performs the excursion $Z^1$ until time
\[s_1=\inf\{ s\geq t_1, \; 3\in\mathcal{B}'(s)\}.\]
Note that\gab{,} most likely, we have $ s_1=t_1+\tau_{Z^1}$, because otherwise another flag has reached site $3$ before the excursion $Z^1$ finished. Since by construction all other flags were, when the excursion started, at a distance $\ell_N$ of the left boundary, this is very unlikely. If 
$s_1\neq t_1+\tau_{Z^1}$, we say that the construction \emph{failed} after inserting the excursion $Z^1$. Regardless of whether or not the construction failed,  we let $\mathcal{B}'$ evolve after time $s_1$ by following the marks in $ \boldsymbol{\mathcal{M}} $ until the second time 
\[t_2=\inf\{ s\geq s_1,  \;\mathcal{B}'(s)\cap\{1,\dots,\ell_N\}=\{2\}\}.\]
We then replace the trajectory of the flag at site $2$ by the excursion $Z^2$,  until time  
\[s_2=\inf\{ s\geq t_2, \; 3\in\mathcal{B}'(s)\}.\]
Once again, if $s_2\neq t_2+\tau_{Z^2}$ we say that the construction failed after inserting excursion $Z^2$, and then repeat the construction. More precisely, assume that $\mathcal{B}'$ has been built up until time $s_{j-1}$, it then follows the marks in $ \boldsymbol{\mathcal{M}} $ until
\[t_j=\inf\{ s\geq s_{j-1},  \;\mathcal{B}'(s)\cap\{1,\dots,\ell_N\}=\{2\}\}.\]
After $t_j$, $\mathcal{B}'(s)$ follows the marks in $ \boldsymbol{\mathcal{M}} $, except the flag present at site $2$ at time $t_j$, which follows the trajectory $Z^j$ until time $s_j=\inf\{ s\geq t_{j}, \; 3\in\mathcal{B}'(s)\}.$
If $s_j\neq t_j+\tau_{Z^j}$, we say that the construction fails after inserting $Z^j$, and we then carry on with the same scheme. Recall that we built both $ \mathcal{B}$ and $\mathcal{B}'$ on the same probability space, we denote $\bQ_A$ their joint distribution starting from the set $A$. Further note that by  Markov property,
$ \mathcal{B} \overset{(d)}{=}\mathcal{B}'.$
We denote  by $\tau'$ the first time a flag in  $\mathcal{B}'$ meets a boundary mark.

\medskip

\noindent{\bf{Step 3: estimation of the probability that the construction failed before time $\tau'$.}}
Recall that the construction fails if, after inserting an excursion $Z_j$, a flag initially at a site $x>\ell_N$ gets to site $3$ before the excursion ends.
This means either that the boundary excursions lasted more than $N^{-2}\ell_N$, or that another flag traveled a distance of order $\ell_N$ in a time $N^{-2}\ell_N$. Both of those probabilities are at most of order $|A|O(e^{-\sqrt{\ell_N}})=O(N^{1-\log N})$, i.e. for any integer $j$,
\begin{equation}
\bQ_A(\mbox{The construction failed after inserting } Z^j )=O(N^{-\log N/2})
\end{equation}
We now only need a rough bound on the number of excursions inserted before time $\tau'$. Define a discrete time random walk $M_j$  as 
\[M_{j+1}=M_j+N^{-3}{\bf 1}_{\{s_j-t_j\geq N^{-3}\}},\]
which jumps at a distance $N^{-3}$ to the right whenever an excursion is inserted that lasted longer than $N^{-3}$. Clearly, $\bP(M_{j+1}=M_j)= O(N^{-1})$ for all steps $j$ and independently from the other steps, because the jumps in $\mathcal{B}$ occur at rate $O(N^{-2})$.  In particular, by a standard large deviation estimate, $\bP(M_{N^4}\leq 1)=O(e^{-N})$. Furthermore, by construction, $M_j\geq 1\Rightarrow s_j\geq 1$. These two remarks, together with \eqref{eq:Etau}, yield that 
\[\bQ_A( s_{N^4}\leq \tau' )\leq \bQ_A( \tau'\geq 1 )+ \bQ_A( s_{N^4}\leq 1 )=O(N^{-\widehat{\theta}})+O(e^{-N})=O(N^{-\widehat{\theta}}),\]
so that putting all those bounds together yield, by union bound,
\begin{equation}
\label{eq:fail}
\bQ_A(\mbox{The construction failed before time  }\tau'  )=O(N^{-\widehat{\theta}})+O(N^4 N^{-\log N/2})=O(N^{-\widehat{\theta}}).
\end{equation}

\medskip

\noindent{\bf{Step 4: estimation of the time spent  with at least two flags close to the boundary.}}
We introduce the time sets
\begin{equation*}
\label{eq:DefIt}
I_t:=\left\{s\leq t, \mathcal{B}'(s)\cap\{1,2\}\neq \emptyset \right\}:=I^1_t\cup I_t^2\cup I_t^3,
\end{equation*}
where
\[I_t^1=\bigcup_{j\;, \; s_j\leq t}[t_j, s_j)\]
\[I_t^2=\left\{s\leq t, \mathcal{B}'(s)\cap\{1,2\}\neq \emptyset\; \mbox{ and }\;|\mathcal{B}'(s)\cap\{1,\dots, \ell_N\}|\geq 2 \right\}\]
and 
\[I^3_t=I_t\setminus( I_t^1\cup I_t^2).\]
Note that for any $t>0$, $I_t$, $I_t^1$ and $I_t^2$ can all be split into a disjoint union of a finite number of time segments of the form $[s,s')$, therefore so can $I_t^3$. We therefore write $I_3=\sqcup_j [a_j, b_j)$. Since $I_t^3\cap I_t^2=\emptyset$, during each of the segments $[a_j,  b_j)$, there is exactly one flag labeled $n=n(j)$ in $\{1,\dots, \ell_N\}$ and this flag is in $\{1,2\}$. Further note that we cannot have $Y_{n}(a_j^-)=3$, because else, since there is no other flag in $\{1,\dots,\ell_N\}$,  $a_j$ would have been the start of an excursion $Z_{j'}$, which is impossible since $I_t^3\cap I_t^1=\emptyset$. This means that at time $a_j^-$, another flag was at site $\ell_N$ and jumped to site $\ell_N+1$ at time $a_j$. On the other hand, one can check that 
\[b_j=\inf\left\{s\geq a_j, \; 2\in \mathcal{B}(s) \mbox{ or }\ell_N \in \mathcal{B}(s)\right\}.\]

\medskip

We claim that, letting
\[I_t^4=\left\{s\leq t, \;|\mathcal{B}'(s)\cap\{1,\dots, 2\ell_N\}|\geq 2 \right\},\]
we have 
\begin{equation}
\label{eq:inclusion1}
\bQ_A(I_{\tau'}^2\cup I_{\tau'}^3\not \subset I_{\tau'}^4)=O(N^{-\log N/4}).
\end{equation}
To prove this identity, first note that $I_{\tau'}^2\subset I_{\tau'}^4$, therefore we only need to prove that with high probability $I_{\tau'}^3\not \subset I_{\tau'}^4$. We already pointed out that if $[a_j,  b_j)$ is one of $I_3$'s segments, at time $a_j$, a flag jumped from $\ell_N$ to $\ell_N+1$, while another was at the left boundary. Since the segment ends whenever the boundary flag reaches site  $3$, in order to have $[a_j,  b_j)\not \subset I_1^4$, the other flag must have reached site $2\ell_N+1$ before time $b_j$. Once again, this means either that the boundary excursions lasted more than $N^{-2}\ell_N$, or that the other flag traveled a distance of order $\ell_N$ in a time $N^{-2}\ell_N$. Both of those probabilities are of order $O(e^{-\sqrt{\ell_N}})=O(N^{-\log N})$, i.e. for any integer $j$,
\begin{equation}
\label{eq:inclusionI4}
\bQ_A([a_j,  b_j)\not \subset I_{b_j}^4)=O(N^{-\log N}).
\end{equation}
We now obtain a very rough estimate of the number of segments in $I_{{\tau'}}^2$, which we bound by the total number of visits $K$ to site $\ell_N$ by any flag occurring before time $\tau'$, 
\[K=|\{t\leq \tau', \ell_N\notin \mathcal{B}'(t^-) \mbox{ and }\ell_N\in \mathcal{B}'(t)\}|.\] 
Not to burden the notation, simply denote $\bE_A $ the expectation w.r.t. $\bQ_A$. We first write $\bE_{A}(K)=\sum_{x\in A} \bE_{{\{x\}}}(K)$. Each term of the sum is less than $\bE_{{\{\ell_N\}}}(K)$. Consider therefore a single flag initially at site $\ell_N$. Each time it hits site $\ell_N$, it has a probability $1/2\ell_N$ of reaching the left boundary before getting back to site $\ell_N$. Once at the left boundary, it has a probability of order $O(N^{-\theta})$ of encountering a boundary mark. In particular, we have $\bE_{\{\ell_N\}}(K)=O(\ell_N N^{\theta})$, so that
\[\bE_{A}(|\{j\in \bN, \;b_j\leq {\tau'}\}|)\leq\bE_{A}(K)=|A|O(\ell_N N^{\theta})=O(N^2).\]
Together with \eqref{eq:inclusionI4} and Markov's inequality, this bound proves \eqref{eq:inclusion1}. 

\medskip

We now estimate $|I^4_{\tau'}|:=\int_0^{\tau'} {\bf 1}_{|\mathcal{B}'(s)\cap\{1,\dots, 2\ell_N\}|\geq 2}ds$. First, we write 
\[|I^4_{\tau'}|\leq \sum_{n\neq n'\leq |A|}\int_0^{\tau'} {\bf 1}_{\{Y_n(s), Y_{n'}(s)\in\{1,\dots, 2\ell_N\}\}}ds.\]
In particular, according to \eqref{eq:Etau} and Markov's inequality
\begin{multline*}
\bQ_A\pa{|I_{\tau'}^4|\geq N^{\theta/2-2}}=\bQ_A({\tau'} \geq N^{-\widehat{\theta}} )+\bQ_A({\tau'} \leq N^{-\widehat{\theta}} \; \mbox{ and }\; |I_{\tau'}^4|\geq N^{\theta/2-2})\\
\leq O(N^{-\widehat{\theta}})+|A|^2N^{2-\theta/2}\max_{x,y\in A}\bE_{\bQ_{\{x,y\}}}\pa{\int_0^{N^{-\widehat{\theta}}} {\bf 1}_{\{Y_1(s), Y_2(s)\in\{1,\dots, 2\ell_N\}\}}ds}
\end{multline*}
Assume that 
\begin{equation}
\label{eq:timeexc} 
\bE_{\bQ_{\{x,y\}}}\pa{\int_0^{N^{-\widehat{\theta}}} {\bf 1}_{\{Y_1(s), Y_2(s)\in\{1,\dots, 2\ell_N\}\}}ds}=O(N^{-2}\ell_N^2\log N)
\end{equation}
uniformly in $x,y\in \Lambda_N$.
Then, $\bQ_A\pa{|I_{\tau'}^4|\geq N^{\theta/2-2}}=|A|^2O(N^{-\nu_\theta})$, where $\nu_\theta$ was defined in the statement of the Lemma.
This bound together with \eqref{eq:inclusionI4} yields  
\begin{equation*}
\bQ_A(|I_{\tau'}^2\cup I_{\tau'}^3|\geq N^{\theta/2-2})=|A|^2O(N^{-\nu_\theta}),
\end{equation*}
and in particular, using \eqref{eq:Etau}, we finally obtain
\begin{equation}
\label{eq:excesstau}
\bQ_A(|I_{N^{-\widehat{\theta}}}^2\cup I_{N^{-\widehat{\theta}}}^3|\geq N^{\theta/2-2})=|A|^2O(N^{-\nu_\theta}),
\end{equation}
where as before $|I_t^2\cup I_t^3|=\int_{I_t^2\cup I_t^3}ds.$

We now only need to prove \eqref{eq:timeexc}. Since it is  quite burdensome in terms of notations, we will simply sketch the proof, it is rather elementary.  See ${\bf Y}(s)=( Y_1, Y_2)(s)$ as a two dimensional random walk on $\{(x,y)\in \Lambda_N^2,\; x\neq y\},$ reflected at the boundaries $\Delta_1=\{(1,y)\mbox{ or }(y,1) \; y\in \Lambda_N\setminus\{1\} \}$ and $\Delta_2=\{(N-1,y)\mbox{ or }(y,N-1), \; y\in \Lambda_N\setminus\{N-1\}\}$. Since we want an upper bound, we assume without loss of generality that $x,y\leq 2\ell_N$. Together, the four following claims, which we will not prove because they are elementary, prove \eqref{eq:timeexc}:
\begin{enumerate}
\item The random variable inside the expectation is bounded by $N^{-\widehat{\theta}}\leq 1$. Furthermore, since $x,y\leq 2\ell_N$, in a time $N^{-\widehat{\theta}}$ with probability $1-O(e^{-C N^{\widehat{\theta}}})$, the random walk ${\bf Y}$ never hits the boundary $\Delta_2$.
\item The random walk ${\bf Y}$ performs excursions, either in the set $\Lambda=\{1,\dots, 10\ell_N\}^2$ or in $\Lambda^c$. Each excursion in  $\Lambda$ lasts on average a time  $O(\ell_N^2N^{-2})$.
\item Each time an excursion in $\Lambda$ ends, the ${\bf Y}$ has a probability of order $\geq C/\log N$ of reaching $\Delta_2$ before hitting $\{1,\dots, 2\ell_N\}^2$, so that on average, ${\bf Y}$ performs $O(\log N)$ excursions in $\Lambda$ before hitting $\Delta_2$.
\item This means that ${\bf Y}$ spends a time $O(\log N \ell_N^2N^{-2})$ in $\{1,\dots, 2\ell_N\}^2$  before hitting $\Delta_2$. Together with the first claim, this proves \eqref{eq:timeexc}.
\end{enumerate}

\medskip

\noindent{\bf{Step 5: Proof of \eqref{eq:Oktaun}.}} We now have all the ingredients to prove \eqref{eq:Oktaun}.
Give the flags an arbitrary label at time $0$ (identical in $\mathcal{B}$ and $\mathcal{B}'$), and recall that we want to estimate  $\bP^\dagger_{A}(C\cap D_\pm\cap \{\tau=\tau_n\} )$ for any $n\leq |A|$. 
Denote by $\tau_n'$ the first time the  flag labeled $n$ in  $\mathcal{B}'$ meets a boundary mark. Finally, similarly to $D_\pm$, denote $C'$ and $D_\pm'$ the events $C'=\{|\mathcal{B}'(\tau')\cap\{1,2\}|=1\}$ and
$D'_\pm=\{\mbox{the mark met by $\mathcal{B}'$ a time $\tau'$ was of type } \pm\}$. Since both processes $\mathcal{B}$ and $\mathcal{B}'$ have the same distribution,  
\[\bP^\dagger_{A}(C\cap D_\pm\cap \{\tau=\tau_n\} )=\bQ_{A}(C'\cap D_\pm'\cap \{\tau'=\tau'_n\} ).\]
Denote by $j^*$ the index of the first excursion  $Z^j$ to meet a boundary mark in $\mathcal{M}^j,$ and let $D_\pm^*$ be the event $D_\pm^*=\{\mbox{The boundary mark met by $Z^{j^*}$ was of type $\pm$}\}$. Further denote $n^*$ the label of the flag performing the excursion $Z^{j^*}$. Recall that the flag labels go from $1$ to $|A|$, we denote $n^*=0$  if the construction failed before one of the excursions met a boundary marks. We claim that 
\begin{equation}
\label{eq:indepn*}
\bQ_{A}(C'\cap D'\cap \{\tau'=\tau'_n\} )=\bQ_{A}(D_\pm^*\cap \{n^*= n\} )+|A|^2O(N^{-\nu_\theta}).
\end{equation}
Denote $\tau_f$ the first time the construction fails, we proved in \eqref{eq:fail} that $\bQ_{A}(\tau_f\leq \tau')=O(N^{-\widehat{\theta}}).$ We now prove \eqref{eq:indepn*}. We discard the possibility that the boundary mark was encountered at the right boundary, since with probability $O(N^{-\widehat{\theta}})$, as a consequence of \eqref{eq:Etau}, no flag made it past $N/2$ before time $\tau$. The boundary mark encountered at time $\tau$ must therefore have appeared during $I_t$ defined in \eqref{eq:DefIt}. Furthermore, according to \eqref{eq:excesstau}, and since boundary marks appear at rate $O(N^{2-\theta})$, the probability that a boundary mark appeared in $I_{N^{-\widehat{\theta}}}^2\cup I_{N^{-\widehat{\theta}}}^3$ is $|A|^2O(N^{-\theta/4})$. The first  boundary mark encountered by $\mathcal{B}'$ must therefore have appeared with probability $1-|A|^2O(N^{-\nu_\theta})$ in $I_t^1$, i.e. during one of the inserted excursions, in which case, since we assumed the construction did not fail before it appeared, the mark was met by $Z^{j^*}$. This proves \eqref{eq:indepn*}, because during the inserted excursions, the flag performing the excursion is alone at the boundary.

Note that all the excursions $Z_j$ are independent from the process $(\mathcal{B}'(s))_{s\leq t_j}$ so that in particular, $n^*$ being measurable w.r.t.  $(\mathcal{B}'(s))_{s\leq t_{j^*}}$, we have 
\begin{equation}
\label{eq:tauandn}
\bQ_{A}(D_\pm^*\cap \{n^*= n\} )=\bQ_{A}(D_\pm^*)\bQ_{A}(n^*= n ).
\end{equation}
Applying the same arguments as before, one obtains straightforwardly
\begin{equation}
\label{eq:tauandn2}
\bQ_{A}(\tau'=\tau'_n )=\bQ_{A}(n^*= n )+|A|^2O(N^{-\nu_\theta}).
\end{equation}

Finally, given a flag at site $2$ performing an excursion at the boundary conditioned to meeting a boundary mark (i.e. conditioned to not jumping to site $3$ before meeting a boundary mark), letting  $p_{1,\pm}$ (resp. $p_2$, $p_3$) the probability that the flag encountered a mark $\pm$  (resp. a branching mark, resp. no boundary mark) before coming back to site $2$, one has the explicit formulas
\begin{multline*}
p_{1,\pm}=\frac{(c+N^{\theta})r\rho_\pm }{(b+c+N^{\theta})(r+N^\theta)}, \quad p_2=\frac{b}{b+c+N^{\theta}}\; \; \; \mbox{and}\;\; \; p_3=\frac{(c+N^{\theta})N^\theta}{(b+c+N^{\theta})(r+N^\theta)},
\end{multline*}
where $\rho_+=\bar\rho$ and $\rho_-=1-\bar\rho$. In particular, assuming that an excursion met a boundary mark, the probability that the first boundary mark encountered was of type $\pm$, resp. branching, is 
\begin{equation}
\label{outcome}
\begin{split}
p^N_\pm=\frac{p_{1,\pm}}{p_{1,+}+p_{1,-}+p_2}&=\frac{(c+N^{\theta})r\rho_\pm }{(c+N^{\theta})r+b(r+N^\theta)}=\frac{r\rho_\pm}{r+b}+O(N^{-\theta}) \\
 \mbox{resp. } \quad  p^N_b&=1-p^N_+-p^N_-=\frac{b}{r+b}+O(N^{-\theta}).
\end{split}
\end{equation}
In particular $\bQ_{A}(D_\pm^*)=p^N_\pm, $ which, together with \eqref{eq:indepn*}, \eqref{eq:tauandn} and \eqref{eq:tauandn2}, finally allows us to write
\begin{equation*}
\bQ_{A}(C'\cap D_\pm'\cap \{\tau'=\tau'_n\} )=\frac{r\rho_\pm}{r+b}\bQ_{A}(\tau'=\tau'_n )+|A|^2O(N^{-\nu_\theta}).
\end{equation*}
Since $\mathcal{B}'$ and $\mathcal{B}$ have the same distribution, this proves \eqref{eq:Oktaun}.

\medskip

\noindent{\bf{Step 6: Taking into account the right boundary.}}
We now prove \eqref{eq:deathbound}. By the Markov property, we first assume without loss of generality that $A\cap\{1,2, N-2,N-1 \}\neq \emptyset$, so that by an elementary adaptation of \eqref{eq:Etau}, $ \bE^\dagger_A(\tau)\leq CN^{\theta-1}$. Assuming $\tau\leq N^{-\widehat{\theta}}$,  we can remove from $A$ the flags initially at a distance more than $N^{1-\widehat{\theta}/4}$ of the boundaries, since one of those reaches the boundary before time $\tau$ with probability exponentially small in $N^{\widehat{\theta}/2}$. More precisely, let us denote 
\[A_1=A\cap \{1,\dots, N^{1-\widehat{\theta}/4}\}\mbox{ and }A_2=A\cap \{ N-N^{1-\widehat{\theta}/4}, \dots, N-1\},\]
according to \eqref{eq:Etau}, and shortening $D=D_+\cup D_-$, we have 
\[\bP^\dagger_{A}(D)=\bP^\dagger_{A_1\cup A_2}(D )+O(N^{-\widehat{\theta}}).\]
Label initially the flags in increasing order from left to right. Then, according to \eqref{eq:Oktaun} and its counterpart at the right boundary,
\begin{align*}\bP^\dagger_{A_1\cup A_2}(D ) =&\sum_{n=1}^{|A_1\cup A_2|}\bP^\dagger_{A_1\cup A_2}(D \mbox{ and } \tau=\tau_n)\\
=&\frac{r}{r+b}\sum_{n=1}^{|A_1|}\bP^\dagger_{A_1\cup A_2}(\tau=\tau_n) +\frac{r'}{r'+b'}\sum_{n=|A_1|+1}^{|A_1\cup A_2|}\bP^\dagger_{A_1\cup A_2}(\tau=\tau_n)\\
&+|A|^2O(N^{-\nu_\theta})
\;  \geq \;  p+|A|^2O(N^{-\nu_\theta})
\end{align*}
where $p$ was defined after \eqref{eq:deathbound}. This concludes the proof.
\end{proof}

Now that this Lemma is proved, we get back to the process $\mathcal{A}$ on which boundary marks have an effect. Start from $A=\{3\}$, and give the label $1$ to the flag at site $3$. Each time a flag is created by the generator \eqref{eq:DefgenAB}, it is given the smallest label not already used up until this point. For any $ t\geq 0$, we denote
\begin{equation}
\label{eq:kappat}
\kappa(t)=1+\abs{\left\{s\leq t, |{\mathcal{A}}(s)|= |{\mathcal{A}}(s^-)|+1  \right\}}.
\end{equation}
the total number of labels used up to time $s$. Finally, we define 
\begin{equation}
\label{eq:DefT}
T=\inf\{s\geq 0, \; \mathcal{A}(s)=\emptyset\}.
\end{equation}

\begin{cor}
\label{cor:A3}
Recall \eqref{eq:Defetheta}. For any $c>0$, there exists $\varepsilon_1(c)>0$ such that 
\begin{equation}
\label{eq:A31}
\bP^\dagger_{\{3\}}(\kappa(T)>c\log N)= O(N^{-\varepsilon_1(c)}),
\end{equation}
There also exists $\varepsilon_2>0$ such that  
\begin{equation}
\label{eq:A32}\bP^\dagger_{\{3\}}(T>N^{-\widehat{\theta}/2})= O(N^{-\varepsilon_2 }),
\end{equation}
\begin{equation}
\label{eq:A33}\bP^\dagger_{\{3\}}(\exists t\in[0, T], \;  \mathcal{A}(t)\ni N/2)= O(N^{-\varepsilon_2 }),
\end{equation}
\begin{equation}
\label{eq:A34}
\bP^\dagger_{\{3\}}(\mbox{$\mathcal{A}$ encountered a boundary mark while it contained $\{1,2\}$})= O(N^{-\varepsilon_2 }).
\end{equation}
\end{cor}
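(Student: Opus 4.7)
I would prove all four bounds simultaneously by iterating Lemma \ref{lem:probdeath} along the successive boundary marks encountered by $\mathcal{A}$. Enumerate these as $\sigma_1<\sigma_2<\cdots$, let $B_k:=|\mathcal{A}(\sigma_k)|$ with $B_0=1$, and $T^B:=\inf\{k:B_k=0\}$, so that $T=\sigma_{T^B}$ and $\kappa(T)=(T^B+1)/2$. By the Markov property at $\sigma_k$ and \eqref{eq:deathbound}, for $k<T^B$
\[
\bP(B_{k+1}=B_k-1\mid\mathcal{F}_{\sigma_k})\ \geq\ p-B_k^3\,O(N^{-\nu_\theta}),
\]
and the argument behind \eqref{eq:indepn*} in Step 5 of Lemma \ref{lem:probdeath}'s proof yields that, conditional on $\mathcal{F}_{\sigma_k}$, the boundary mark at time $\sigma_{k+1}$ is encountered while $\{1,2\}\subset\mathcal{A}$ with probability at most $B_k^2\,O(N^{-\nu_\theta})$. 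The \emph{main obstacle} is the self-referential nature of these estimates: the correction involves the very $|\mathcal{A}|$ we seek to bound. I would resolve this with a bootstrap via the stopping time $\sigma^\star:=\inf\{k:B_k\geq(\log N)^{1/3}\}$, so that for $k<\sigma^\star$ the correction is $o(1)$ and the down-step probability is bounded below by some $p_N>1/2$ for $N$ large.

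\textbf{Bounds (1) and (4).} With this setup, $(B_{k\wedge\sigma^\star})$ can be coupled from above with an i.i.d.\ $\pm 1$ random walk on $\NN$ of down-probability $p_N$. This dominating walk is strictly subcritical, hence its hitting time of $0$ from $1$ has exponentially decaying tails, yielding $\bP(T^B>c\log N,\ \sigma^\star>c\log N)=O(N^{-\varepsilon_1(c)})$ for any $c>0$. Moreover, a union bound over paths shows $\bP(\sigma^\star\leq c\log N)\leq\binom{c\log N}{(\log N)^{1/3}}(1-p_N)^{(\log N)^{1/3}}$, which is stretched-exponentially (hence super-polynomially) small. This proves (1). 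For (4), summing the per-step bound $B_k^2\,O(N^{-\nu_\theta})\leq(\log N)^{2/3}O(N^{-\nu_\theta})$ over $k\leq T^B\wedge c\log N$ on the good event gives total probability $O((\log N)^{5/3}N^{-\nu_\theta})=O(N^{-\varepsilon_2})$.

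\textbf{Bounds (2) and (3).} For (2), an adaptation of Step 1 in the proof of Lemma \ref{lem:probdeath} shows that, starting from any $A$ containing at least one flag, the expected waiting time until the next boundary mark is $O(N^{\theta-1})$: the closest flag reaches the boundary in expected time $O(1)$ (reflected walk of speed $N^2$ on $\Lambda_N$), and each boundary excursion meets a mark with probability $\Theta(N^{-\theta})$, giving $\Theta(N^\theta)$ excursions each of mean length $O(N^{-1})$. Combining with $\kappa(T)=O(\log N)$ from (1) and Markov's inequality,
\[
\bP(T>N^{-\widehat{\theta}/2})\ =\ O\bigl(\log N\cdot N^{\theta-1+\widehat{\theta}/2}\bigr)\ =\ O\bigl(\log N\cdot N^{-3(1-\theta)/4}\bigr),
\]
which is $O(N^{-\varepsilon_2})$. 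For (3), on the event $\{T\leq N^{-\widehat{\theta}/2}\}$ combined with that of (1), each of the at most $O(\log N)$ flags is a reflected symmetric random walk of speed $N^2$ starting at a site in $\{1,2,3\}$, whose maximal displacement by time $T$ is of order $N\sqrt{T}=O(N^{1-\widehat{\theta}/4})$; a Gaussian tail bound gives that reaching $N/2$ has probability $\exp(-cN^{\widehat{\theta}/2})$, and a union bound over flags yields (3).
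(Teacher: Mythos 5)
Your overall strategy—compare $|\mathcal{A}|$ along its boundary-mark jump chain to a subcritical $\pm1$ walk via \eqref{eq:deathbound}, then bound $T$ by summing flag lifespans, control the maximal displacement by a Gaussian tail, and sum per-step probabilities for \eqref{eq:A34}—is the same as the paper's. However there is a concrete error in your argument for \eqref{eq:A31}, and the bootstrap you introduce to fix the "self-referential" error term is both unnecessary and, as you carry it out, incorrect.

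The truncation at $\sigma^\star=\inf\{k:B_k\geq(\log N)^{1/3}\}$ is not needed. Since each boundary mark changes $|\mathcal{A}|$ by exactly $\pm1$ and $|\mathcal{A}(0)|=1$, one has deterministically $B_k\leq 1+k$. Hence for the first $c\log N$ boundary updates the error in \eqref{eq:deathbound} is bounded by $(1+c\log N)^3 O(N^{-\nu_\theta})=o(1)$, uniformly, and the down-step probability is $\geq p'$ for some fixed $p'\in(1/2,p)$ once $N$ is large. The exponential Chebyshev bound on the comparison i.i.d.\ walk (with $\lambda$ small enough that $p'e^{-\lambda}+(1-p')e^{\lambda}<1$) then yields \eqref{eq:A31} directly; this is exactly what the paper does, and it makes the event $\{\sigma^\star\leq c\log N\}$ irrelevant. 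The same deterministic bound $B_k\leq 1+k$ cleans up your argument for \eqref{eq:A34}: summing $(1+k)^2O(N^{-\nu_\theta})$ over $k\leq c\log N$ gives $O((\log N)^3 N^{-\nu_\theta})$, still polynomially small.

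More seriously, the bound you state, $\bP(\sigma^\star\leq c\log N)\leq\binom{c\log N}{(\log N)^{1/3}}(1-p_N)^{(\log N)^{1/3}}$, is not small: writing $n=c\log N$ and $m=(\log N)^{1/3}$, one has $\binom{n}{m}\approx(ne/m)^m=(ce(\log N)^{2/3})^m$, so the product is $\exp\bigl(m\,[\tfrac23\log\log N+\log(ce(1-p_N))]\bigr)\to\infty$. The underlying heuristic (union bound over which steps go up) is also off: a subcritical walk run for $c\log N$ steps typically has $\Theta(\log N)\gg(\log N)^{1/3}$ up-steps, so that event is not rare at all. What is rare is that the walk accumulates $(\log N)^{1/3}$ \emph{net} up-steps before extinction; a gambler's-ruin estimate gives probability $\bigl(\tfrac{1-p'}{p'}\bigr)^{(\log N)^{1/3}-1}$, which is indeed super-polynomially small, but this is a different computation than the one you wrote. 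As it stands, the bound you need fails, and with it \eqref{eq:A31}; the two fixes above (either drop the truncation and use $B_k\leq1+k$, or replace the binomial bound by gambler's ruin) repair it.

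Two smaller remarks. In \eqref{eq:A32} you assert the expected inter-mark waiting time is $O(N^{\theta-1})$ "starting from any $A$ containing at least one flag"; this is false for a flag deep in the bulk, where the waiting time is $O(1)$. It holds here because the initial flag is at site $3$ and new flags are created at sites $1,2,N-2,N-1$, so all flags start within $O(1)$ sites of a boundary; this should be said. Finally, the identity $\kappa(T)=(T^B+1)/2$ is only valid on the event that no boundary mark of type $c,c'$ deletes a flag, which is precisely what \eqref{eq:A34} controls; since you are proving the four bounds simultaneously this circularity should be acknowledged and handled (e.g.\ by working on the complement of the $c,c'$-deletion event, whose probability is absorbed into the error via \eqref{eq:tauA12bis}).
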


\begin{proof}[Proof of Corollary \ref{cor:A3}]
Fix $c>0$, We first prove \eqref{eq:A31}, which is a consequence of equation \eqref{eq:deathbound}, which states that regardless of the initial set, the probability that the first boundary mark encountered was of type $\pm$ (and therefore deleted a flag) is $p+o_N(1)>1/2$. Consider the times $0<t_1<\dots< t_{m}=T$ at which $|\mathcal{A}|$ changes. Set $t_0=0$, and consider the discrete time random walk $Y_k=|\mathcal{A}(t_k)|\in \bN$ for $k\geq 0$. Note that $Y_k$ is not a Markov process in itself since its jump rate depends on $\mathcal{A}(t_k)$. However, assuming $\kappa(T)>k$, one easily checks that $Y_{k}>0$: else, there has been less than $k$ updates of $|\mathcal{A}|$ before it became empty, so in particular the process must have branched less than $k$ times. (Note that $Y_k=0$ iff $\mathcal{A}(t_k)=\emptyset$). Furthermore, after $j$ updates of $|\mathcal{A}|$, the latter cannot be more than $1+j$. According to \eqref{eq:deathbound}, there exists a constant $C$ such that 
\[\bP^\dagger_{\{3\}}(Y_k>0)\leq\bP \pa{1+\sum_{j=1}^k B_j>0},\]
where the $B_j$'s are independent variables taking the value $-1$ (resp. $1$) w.p. $q_j:=p+C(j+1)^2N^{-\nu_\theta}$ (resp. $1-q_j$). We refer the reader to lemma 5.2 of \cite{ELX18} for more details. In particular, by Markov inequality, for any positive $\lambda$
\[\bP^\dagger_{\{3\}}(\kappa(T)>k)\leq \bP^\dagger_{\{3\}}(Y_{k}>0) \leq \bP \pa{\exp\cro{\sum_{j=1}^{k} \lambda B_j}>e^{-\lambda}}\leq  e^{\lambda} \prod_{j=1}^{k}\bE\pa{e^{\lambda B_j}}.\]
Choose $p'=1/4+p/2\in (1/2,p)$, for $N$ large enough, and any $j\leq c\log N$, we have $q_j\geq p'$, so that for any $N$ large enough and for any $\lambda>0$
\[\bP^\dagger_{\{3\}}(\kappa(T)>c\log N)\leq  e^{\lambda}\pa{p'e^{-\lambda}+(1-p')e^{\lambda}}^{c\log N}.\]
Choose $\lambda=(p'-1/2)^2$ small enough so that $p'e^{-\lambda}+(1-p')e^{\lambda}<1$
to obtain as wanted that there exists $\varepsilon_1=\varepsilon_1(c)$ such that 
\[\bP^\dagger_{\{3\}}(\kappa(T)>c\log N)\leq O( N^{-\varepsilon_1(c)}),\]
which proves \eqref{eq:A31}.

\medskip

The second bound \eqref{eq:A32} is a direct consequence of the first. Thanks to the first bound, we first write for some $\varepsilon:=\varepsilon_1(1)>0$
\[\bP^\dagger_{\{3\}}(T>N^{-\widehat{\theta} /2})\leq \bP^\dagger_{\{3\}}(T>N^{-\widehat{\theta}/2 } \; \mbox{ and } \; \kappa(T)\leq \log N)+O( N^{-\varepsilon}).\]
We then bound from above $T$ by the sum of the lifespans $T_k$ (the difference between the time the label $k$ encounters a boundary mark of type $\pm$, and the time the label $k$ is introduced) of its flags, we obtain, since the lifespan only increases when a flag starts at site $3$ instead of sites $1,2$,
\begin{align*} 
\bP^\dagger_{\{3\}}(T>N^{-\widehat{\theta} /2} \; \mbox{ and } \; \kappa(T)\leq \log N)&\leq \sum_{n=1}^{\log N}\bP^\dagger_{\{3\}}(T>N^{-\widehat{\theta} /2} \; \mbox{ and } \; \kappa(T)=n)\\
&\leq\sum_{n=1}^{\log N} \bP^\dagger_{\{3\}}\pa{\sum_{k\leq n} T_k>N^{-\widehat{\theta} /2} \; \mbox{ and } \; \kappa(T)=n}\\
&\leq\sum_{n=1}^{\log N} \bP^\dagger_{\{3\}}\pa{\max_{k\leq n} T_k>\frac{N^{-\widehat{\theta} /2}}{n} \; \mbox{ and } \; \kappa(T)=n}\\
&\leq(\log N)^2 \bP^\dagger_{\{3\}}\pa{T_1>\frac{N^{-\widehat{\theta} /2}}{\log N} \; \mbox{ and } \; \kappa(T)\leq \log N}
\end{align*}
On the event $\kappa(T)\leq n$, the flag labeled $1$ has branched at most $n$ times. In particular, we can use \eqref{eq:tausite3}
 to obtain that 
\[\bE^\dagger_{\{3\}}\pa{T_1 {\bf 1}_{\{\kappa(T)\leq \log N\}}} \leq \log N \bE^\dagger_{\{3\}}\pa{\tau},\]
where as in \eqref{eq:tausite3}, $\tau$ is the time the flag $1$ waits before something happens to it at the left boundary. The right-hand side above, according to  \eqref{eq:tausite3}, is $O(N^{\theta-1} \log N )$, therefore by Markov inequality, 
\[\bP^\dagger_{\{3\}}(T>N^{-\widehat{\theta} /2})\leq  O(N^{-\widehat{\theta}}(\log N)^4)+ O( N^{-\varepsilon}), \]
which proves \eqref{eq:A32} by choosing $\varepsilon_2$ strictly smaller than both $\widehat{\theta}$ and $\varepsilon$.

\medskip

Identity \eqref{eq:A33} is an immediate consequence of the first two : assuming that $\kappa(T)\leq \log N$ and $T\leq N^{-\widehat{\theta} }$, by union bound, the probability that a flag reaches $x=N/2$ is less than $\log N\bP_3\pa{ \sup_{0\leq t\leq N^{-\widehat{\theta} }} X(t) \geq N/2}$, which is the probability that a random walker on $\{3,\dots, N\}$, starting at site $3$, jumping at rate $N^2$ and reflected at the boundaries, visits site $N/2$ before time $ N^{-\widehat{\theta} }$, which is of order $O(e^{-N^{\widehat{\theta}/2}})$, thus proving \eqref{eq:A33}.

\medskip

Finally, \eqref{eq:A34} is a consequence of \eqref{eq:A31} and \eqref{eq:excesstau}. The latter immediately yields, for any $A\subset \{1, \dots,N/2\}$ and such that $A\cap \{1,2\}\neq \emptyset$
\begin{equation}
\label{eq:tauA12bis}
\bP^\dagger_{A}(\{1,2\}\subset\mathcal{A}(\tau^-))=|A|^2 O(N^{-\nu_\theta}).
\end{equation}
Note we relaxed slightly the assumption on the set $A$, which can contain either site $1$ or $2$ and not just $2$. This is not an issue, since with probability $1-O(N^{-\theta})$, any flag starting from site $1$ or $2$ reaches site $3$ before any boundary mark appeared. Equation \eqref{eq:tauA12bis} is therefore a simple consequence of Markov's inequality and the fact that $\nu_\theta<\theta$. Assuming $\kappa(T)\leq \log N$, we have $|\mathcal{A}(s)|\leq 1 +\log N$ for any $s\leq T$, and at most $2\log N+2$ boundary marks were encountered by $\mathcal{A}$ before time $T$. On $\{\kappa(T)\leq \log N\}\cap \{\sup_{t\leq T}\mathcal{A}(t)<N/2\}$, we therefore use the  Markov property together with  \eqref{eq:tauA12bis} $2\log N+2$ times, to obtain that the probability in the left hand-side of \eqref{eq:A34} is less than
\[(2\log N+2)(\log N+1)^2O(N^{-\nu_\theta})+\bP^\dagger_{A}\pa{\kappa(T)\geq \log N\;\mbox{ or }\;\sup_{t\leq T}\mathcal{A}(t)\geq N/2}.\]
This proves \eqref{eq:A34} and the Lemma.
\end{proof}

\subsection{Determination tree}
\label{label:determination}
Now that we studied the process $\mathcal{A}$, we can, with it, determine the nature of $\eta_t(x)$. Recall from Section \ref{unknowns} that the process $\mathcal{A}$ observed \emph{forward in time} $0\rightarrow t$, represents the evolution of the set of unknowns $\hat{\mathcal{A}}$ $t\rightarrow0$ \emph{backward in time}. To determine the value of $\eta_t(x)$, we start the process $\mathcal{A}$ from $\mathcal{A}(0)=\{x\}$, and, following the time evolution of $\mathcal{A}(s)$, build a labeled rooted tree $\mathcal{T}_s$, for $s\in [0,t]$. An example of this construction is represented in figure \ref{fig:Determination}. Define $\mathcal{T}_0$ as the trivial one-vertex rooted tree, and label $1$ its only vertex corresponding to the label of the flag in $\mathcal{A}(0)$. Then, the tree remains unchanged until the first time $\tau$ at which the process $\mathcal{A}$ encounters a boundary mark. if the mark is of type $\pm$, we give the root a unique child, labeled $\pm$. If the mark is of branching type, we give the root two children, the first one labeled $1$ as well, and the second one labeled $2$. 
 
 \begin{figure}
\includegraphics[width=14cm]{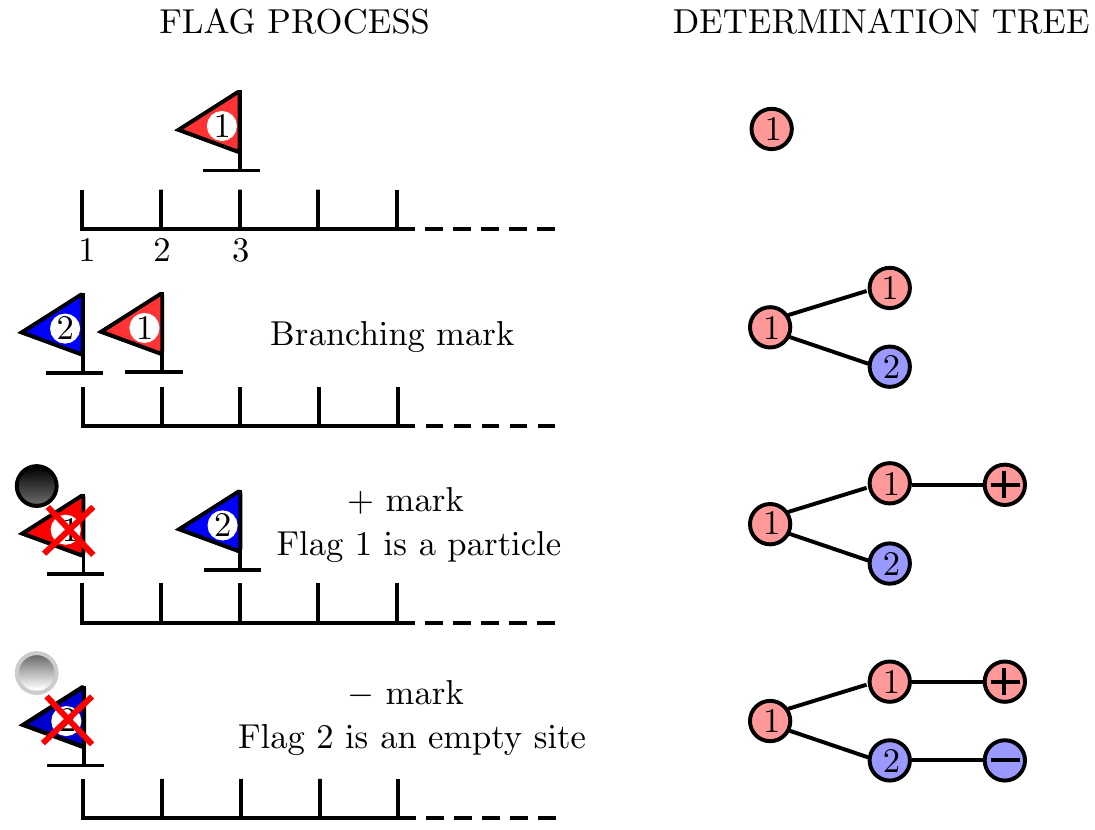}
\caption{An example of construction of the determination tree. Each time a boundary mark is encountered by one of the flags in the process $\mathcal{A}$, the tree is updated accordingly, by adding two children (for a branching mark) to each node with the affected flag's label, or one single child labelled $\pm$ (for a $\pm$ mark).}
\label{fig:Determination}
\end{figure}
 
We then carry on with the construction until time $T\wedge t$ : each time one of the flags (labeled $k$) in $\mathcal{A}$ encounters a boundary mark at time $\tau$, we build $\mathcal{T}_{\tau}$ from $\mathcal{T}_{\tau^-}$, with the following rules depending on the mark encountered by $\mathcal{A}$ at time $\tau$. Denote $k$ the label of the flag affected by the mark $\tau$.
\begin{itemize}
\item [--] If the mark is of type $\pm$, we give all leaves labeled $k$ in $\mathcal{T}_{\tau^-}$ a unique child labeled $\pm$. This is the case where the value of site $X_k(\tau)$ was updated according to a reservoir (i.e. was filled if the mark was of type $+$, or emptied if the mark was of type $-$).
\item [--] If the mark is of type $(b,2)$ (resp. $(b, N-2)$, we give all leaves labeled $k$ in $\mathcal{T}_{\tau^-}$ two children. We label the first one $k$, and the second one $k'$. The integer $k'$ is either the label of the flag at site $2$ if $2\in \mathcal{A}(\tau)$ (resp. at site $N-2$ if $N-2\in \mathcal{A}(\tau)$), or $k'=\kappa(\tau)=\kappa(\tau^-)+1$ which is the smallest unused label until now. 
This is the case where site $2$ (resp. $N-2$) was filled if site $1$ was occupied, and nothing happened otherwise.
\end{itemize}
We carry on with this construction until $T_t:=T\wedge t$. At time $T_t$, either the processes died ($\mathcal{A}(T_t)=\emptyset$), in which case the last leaf labeled $k$ received a unique child labeled $\pm$, or it didn't, and there remains in $\mathcal{T}_{T_t}$ some leaves with labels $k$ corresponding to the flag's labels in $\mathcal{A}(T_t)$. Up to this point we neglected the possibility that one of the boundary marks encountered was of type $c$ or $c'$ (recall that it requires for two flags to be at the same boundary at the time of the mark), in this case, we say that the construction failed. According to \eqref{eq:A34}, the probability that the construction of the tree fails is $O(N^{-\varepsilon_2})$.

If $T_t:=T$, we just let $\mathcal{T}_s=\mathcal{T}_T$ constant in $s\in [T,t]$. If however $T_t:= t$, in order to build $\mathcal{T}_{T_t}=\mathcal{T}_{t}$ we give each leaf labeled $k$ in $\mathcal{T}_{t^-}$ a unique child labeled $+$ (resp. $-$) if $\eta_0(X_k(t))=1$ (resp. $\eta_0(X_k(t))=1$) (where $X_k(t)$ is the position of the flag labeled $k$ in $\Lambda_N$ at time $t$). For any rooted tree $\mathcal{T}$, and any vertex $v\in \mathcal{T}$, we denote $c(v)$ the number of its children, and $ s(v)$ the number of its siblings (i.e. the number of children of its parent). We call \emph{only children} the vertices such that $ s(v)=1$.  One easily checks that, if the construction did not fail, the tree $\mathcal{T}_{t}$ has the following properties:
\begin{enumerate}
\item each leaf has a label $\pm$, and no other vertex has a label $\pm$.
\item Each vertex satisfies $c(v)\in \{0,1,2\}$. 
\item The leaves are exactly the only children, i.e. $c(v)=0$ iff $s(v)=1$.
\end{enumerate}
We denote by $\bT$ the set of rooted trees satisfying these three properties.

\medskip

 \begin{figure}
\includegraphics[width=14cm]{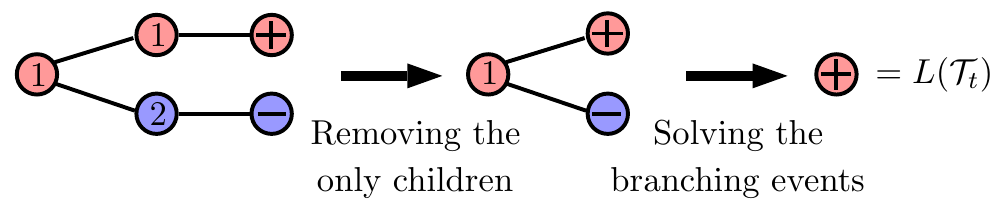}
\caption{Solving the determination tree.}
\label{fig:solve}
\end{figure}
 
Assuming the construction did not fail, we now recover, as represented in figure \ref{fig:solve} the value of $ \eta_t(x)$ by ``solving'' the tree $\mathcal{T}_{t}$. We start by deleting the \emph{only children}. For each only child deleted, we give their parent, which are now leaves, the same label $\pm$ its child had. Once this procedure is realized, there are no more only children, so that each vertex of the remaining tree has either two or zero children.
Then, one vertex at a time, we choose an arbitrary vertex $v$ with two children $(v_1, v_2)$ which are both leaves. Then, we delete $v_1$ and $v_2$ from the tree, and give $v$ : 
\begin{itemize}
\item [--] the label $+$ if $v_2$'s label is also $+$, regardless of $v_1$'s label. This is the case where there was a flag at site $1$ or $N-1$, and a mark of branching type occurred. At the time of the mark, a particle was  therefore placed at site $2$ or $N-2$.
\item [--] the same label as $v_1$ if  $v_2$'s label is $-$. This is the case where at the time of the mark of branching type, there was not a particle at site $1$ or $N-1$, so that the value of site $2$ or $N-2$ did not change.
\end{itemize}
Ultimately, this procedure deletes the entire tree except the root, and gives the root a label $ \pm:=L(\mathcal{T}_{t})$. This ``solving'' procedure can be defined for any tree in the set $\bT$, so that we see $L$ as a function $\bT \to \{+,-\}$.

\subsection{Proof of Lemmas \ref{lem:res} and \ref{lem:bcor}}
\label{sec_detfinal}
This construction is justified by the following result, which we will not prove because it is strictly analogous to Lemma 5.1, p.23 of \cite{ELX18}.
\begin{lem}
\label{lem:eta0A}
Assuming that the construction did not fail, we have 
\[\eta_t(x)=1 \quad \mbox{iff} \quad L(\mathcal{T}_{t})=+.\]
\end{lem}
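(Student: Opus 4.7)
The plan is to prove the lemma by induction on the number of marks encountered by the dual process $\mathcal{A}$ during its lifetime $[0,T_t]$. The proof is entirely deterministic given the realization of the Poisson marks and the initial configuration, so I would fix such a realization (on the event of non-failure) and treat the statement as a combinatorial identity between the graphical construction and the solving procedure.

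First I would establish a slightly stronger invariant which is well-adapted to induction: for every $s \in [0, T_t]$, if we replace each leaf of $\mathcal{T}_s$ whose label is an active flag index $k \in K(s)$ by the ``true'' value $\eta_{t-s}(X_k(s)) \in \{+,-\}$, and then run the solving procedure on this augmented tree, the output is $\eta_t(x)$. The lemma follows by specializing to $s = T_t$: either $\mathcal{A}(T_t) = \emptyset$ and the leaves are already labeled $\pm$ by the construction, or $T_t = t$ and the leaves are relabeled using $\eta_0(X_k(t)) = \eta_{t-T_t}(X_k(T_t))$, so the augmentation coincides with the definition of $\mathcal{T}_{t}$.

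The base case at $s=0$ is immediate since $\mathcal{T}_0$ is the single root labeled $1$, whose augmented value is $\eta_t(X_1(0)) = \eta_t(x)$. For the inductive step I would verify that the invariant is preserved across each of the four event types of $\mathcal{A}$. A stirring mark simultaneously swaps two adjacent values in the graphical construction and swaps the corresponding flag positions, so each quantity $\eta_{t-s}(X_k(s))$ is unchanged, and the tree is untouched. A $c$-mark is treated similarly, relying on the non-failure assumption to guarantee that at most one of $\{1,2\}$ (resp. $\{N-2,N-1\}$) is in $\mathcal{A}$. A $\pm$-mark at site $X_k(s^-) \in \{1,N-1\}$ forces $\eta_{t-s}(X_k(s^-)) = \pm$ in the graphical construction, and correspondingly attaches a $\pm$-leaf under each vertex labeled $k$; the solving rule (inheriting the child's label through an only child) then reproduces the value $\pm$ wherever flag $k$'s contribution was needed. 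A branching mark at $X_k(s^-)=2$ gives, by direct inspection of $c_{l,2}$, the identity
\begin{equation*}
\eta_{t-s}(2) \;=\; \eta_{(t-s)^-}(1) \,\vee\, \eta_{(t-s)^-}(2),
\end{equation*}
and the tree replaces each leaf labeled $k$ by a binary node with children labeled $k$ (tracking the persistent flag at site $2$) and $k'$ (tracking the new flag at site $1$). The solving rule assigns the parent label $+$ if $v_2 = +$ and $v_1$'s label otherwise, which is exactly the Boolean OR; the analogous argument handles the right boundary.

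The main obstacle would be the careful bookkeeping that the invariant is preserved simultaneously at \emph{all} leaves carrying the same active label $k$, not just one, whenever an event affects flag $k$. This requires checking that the construction consistently identifies $k'$ at a branching mark (as either a fresh label or the existing label of the other boundary flag) and that subsequent evolution of $k'$ (possibly its own branching or death) interacts coherently with every occurrence of that label in the tree. The non-failure assumption \eqref{eq:A34} is essential here: it ensures that the bijection between flags in $\mathcal{A}$ and leaf-labels in $\mathcal{T}_s$ remains well-defined, as a $c$-mark affecting two already-flagged boundary sites would otherwise destroy a flag without a matching $\pm$-child in the tree, breaking the invariant.
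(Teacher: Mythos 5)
The paper does not actually prove this lemma: it defers to Lemma~5.1 of \cite{ELX18}, stating only that the argument is ``strictly analogous.'' Your proposal supplies a genuine proof, and it is correct. Your strengthened invariant --- that at every dual time $s\le T_t$, augmenting the active leaves of $\mathcal{T}_s$ by the true values $\eta_{t-s}(X_k(s))$ and then solving returns $\eta_t(x)$ --- is exactly the right inductive quantity, and the case analysis over mark types is complete. In particular, you correctly identified that a branching mark encodes the graphical update $\eta_\sigma(2)=\eta_{\sigma^-}(1)\vee\eta_{\sigma^-}(2)$ and that the asymmetric solving rule ($+$ if $v_2=+$, else $v_1$'s label) is precisely this Boolean OR with the convention that $v_2$ tracks site~$1$ and $v_1$ tracks site~$2$; that stirring and $c$-marks merely relocate a flag while preserving the tracked value; and that the non-failure event is what guarantees the flag-to-label correspondence stays consistent (a $c$-mark with both boundary sites flagged deletes a flag with no compensating $\pm$-leaf). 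The only harmless imprecision is in the dual-time indexing: strictly one should distinguish $\eta_{(t-s)^-}$ from $\eta_{t-s}$ at mark times, but between marks these agree and your argument clearly handles the correct pre/post values at each transition. Your closing observation that the case $T_t=t$ is covered because $\eta_0(X_k(t))=\eta_{t-T_t}(X_k(T_t))$ is also exactly what makes the augmented tree at $s=T_t$ coincide with $\mathcal{T}_t$ as defined in the paper.
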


Recall the definition of the outcome's probabilities of an excursion $p^N_\pm$ and $ p^N_b$ introduced in  \eqref{outcome},  we define the distribution $m^N_{\bar \rho}(\cdot)$ of a labeled random tree $\mathcal{T}\in \mathbb{T}$, built as follows :
\begin{itemize}
\item [--] The root is labeled $*$. 
\item [--] As long as there is still a leaf labeled $*$ in the tree, one such vertex chosen arbitrarily receives two children with probability $p^N_b$, each labeled $*$, and receives a unique child labeled $\pm$ w.p. $p^N_\pm$.
\item [--] The construction ends when there is no longer any leaf labeled $*$ (note that we assumed $r_2>b$, so that a.s., for $N$ large enough, this construction ends after a finite number of steps).
\end{itemize}
For any tree $\mathcal{T}\in \mathbb{T}$, we denote $|\mathcal{T}|$ its number of  vertices. The main result of this section is the following.

\begin{lem}
\label{lem:tree}
There exists $\varepsilon_3>0$ such that for any $\mathcal{T}\in \mathbb{T}$, 
\[\sup_{t\geq N^{-\widehat{\theta}}}\abs{\bP^{\dagger}_{\{3\}}(\mathcal{T}_t=\mathcal{T})-m^N_{\bar\rho}(\mathcal{T})}=O( N^{-\varepsilon_3}).\]
where the identity $\mathcal{T}_t=\mathcal{T}$ means that the structure of the tree is the same, and that the labels  $\pm$ of the leaves are identical as well.
\end{lem}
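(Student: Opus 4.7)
The plan is to couple step-by-step the construction of $\mathcal{T}_t$ with the branching construction generating $m^N_{\bar\rho}$, leveraging the structural estimates of Corollary~\ref{cor:A3} together with the independent excursion replacement developed in the proof of Lemma~\ref{lem:probdeath}.

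First, restrict to the \emph{good event} $G$ on which $\kappa(T) \leq \log N$, $T \leq t$, no flag ever reaches site $N/2$, and no boundary mark of type $c$ or $c'$ is encountered by $\mathcal{A}$ while two flags occupy the same boundary window. By Corollary~\ref{cor:A3} (using $t \geq N^{-\widehat\theta}$ for \eqref{eq:A32}), one has $\bP^\dagger_{\{3\}}(G^c) = O(N^{-\varepsilon})$ for some $\varepsilon > 0$. On $G$, the tree $\mathcal{T}_t$ is produced by at most $2\log N$ boundary events---each the encounter of a single flag in $\mathcal{A}$ with a mark of type $(\pm, 1)$, $(\pm, N-1)$, $b$ or $b'$---and, crucially, every branching in $\mathcal{T}_t$ spawns a genuinely fresh flag label (never reusing an existing one), matching the combinatorial structure of $m^N_{\bar\rho}$.

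Second, compare each boundary event's outcome with the resolution of a leaf in $m^N_{\bar\rho}$. The identity \eqref{outcome} already shows that a single boundary excursion conditioned on meeting a boundary mark produces the outcomes $+$, $-$ and $b$ with the exact probabilities $p^N_\pm$ and $p^N_b$ defining $m^N_{\bar\rho}$. To lift this to the joint law of consecutive boundary events, iterate the \emph{excursion insertion} technique from Steps 2--4 of the proof of Lemma~\ref{lem:probdeath}: on $G$, at the moment a flag hits the boundary window every other flag sits at distance at least $\ell_N = (\log N)^4$, so the excursion of the active flag may be substituted---at a cost $|\mathcal{A}(s)|^2\, O(N^{-\nu_\theta})$ per substitution---by a fresh independent excursion whose outcome is drawn independently according to $(p^N_\pm, p^N_b)$. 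Since $|\mathcal{A}(s)| \leq 1 + \log N$ throughout on $G$ and at most $\log N$ boundary events occur, applying the strong Markov property between successive events makes the replacement errors add rather than multiply, yielding, for any $\mathcal{T} \in \bT$,
\[
\left| \bP^\dagger_{\{3\}}(\mathcal{T}_t = \mathcal{T}) - m^N_{\bar\rho}(\mathcal{T}) \right| \leq (\log N)^3 \, O(N^{-\nu_\theta}) + O(N^{-\varepsilon}),
\]
after noting that trees with $|\mathcal{T}| > 2\log N + 1$ contribute negligibly to both sides (subcriticality of $m^N_{\bar\rho}$ follows from \eqref{ass:thetal1} since it forces $p^N_b < 1/2$; for $\mathcal{T}_t$ this is \eqref{eq:A31}). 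Any $\varepsilon_3 < \min(\varepsilon, \nu_\theta)$ then concludes.

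The principal obstacle is this second step: propagating approximate independence through up to $\log N$ consecutive boundary events without exponential blow-up in the error. The key property is that, on $G$, the separation between flags at each boundary encounter is large enough ($\geq \ell_N$) for the single-step insertion of Lemma~\ref{lem:probdeath} to apply, so that conditioning on the past of $\mathcal{A}$ and invoking the Markov property lets the at-most-$\log N$ substitution errors accumulate additively rather than multiplicatively.
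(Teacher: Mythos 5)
Your proposal is correct and follows essentially the same strategy as the paper: restrict to a good event via Corollary~\ref{cor:A3}, then iterate the independent-excursion insertion of Lemma~\ref{lem:probdeath} across the at-most-$O(\log N)$ boundary events, letting the per-step errors $|\mathcal{A}|^2 O(N^{-\nu_\theta})$ accumulate additively via the strong Markov property (the paper packages this as a comparison with two auxiliary processes $\mathcal{A}'$, $\mathcal{A}''$ and bounds $\bQ(F\cup M\cup O\cup S)$). One small imprecision: membership in $G$ does \emph{not} deterministically guarantee that all other flags are at distance $\geq\ell_N$ when one hits the boundary window; rather, the excursion insertion is only performed when that separation holds, and the $O(N^{-\nu_\theta})$ cost per step is precisely the probability (via \eqref{eq:excesstau}/\eqref{eq:indepn*}) that a boundary mark falls outside such an inserted excursion -- you charge that cost but attribute it to the wrong source.
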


Before proving the Lemma, we state the following result.
\begin{cor}
\label{prop:valL}
There exists $\varepsilon_4>0$ such that , 
\[\sup_{t\geq N^{-\widehat{\theta}}}\abs{\rho^N_t(3)-\alpha}=\sup_{t\geq N^{-\widehat{\theta}}}\abs{\rho^N_t(3)-m^N_{\bar\rho}(L(\mathcal{T})=+)}+O(N^{-\theta})=O(N^{-\varepsilon_4}),\]
where $\alpha$ is given by \eqref{eq:alpha}.
\end{cor}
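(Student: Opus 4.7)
The plan is to bridge $\rho^N_t(3)$ and $\alpha$ through the intermediate quantity $\alpha^N := m^N_{\bar\rho}(L(\mathcal{T})=+)$. By Lemma~\ref{lem:eta0A} combined with the failure bound \eqref{eq:A34}, one has
\begin{equation*}
\rho^N_t(3) = \bP^\dagger_{\{3\}}(L(\mathcal{T}_t)=+) + O(N^{-\varepsilon_2}),
\end{equation*}
so it suffices to prove $|\bP^\dagger_{\{3\}}(L(\mathcal{T}_t)=+) - \alpha^N| = O(N^{-\varepsilon'})$ and $|\alpha^N - \alpha| = O(N^{-\theta})$, from which the corollary follows with $\varepsilon_4 = \min(\varepsilon',\theta,\varepsilon_2)$.

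For the first estimate I would truncate to trees with at most $c\log N$ vertices for a small constant $c>0$. Under $\bP^\dagger_{\{3\}}$, the size of $\mathcal{T}_t$ is controlled linearly by $\kappa(T)$, so \eqref{eq:A31} gives $\bP^\dagger_{\{3\}}(|\mathcal{T}_t|>c\log N) = O(N^{-\varepsilon_1(c)})$. Under $m^N_{\bar\rho}$, the tree is Galton--Watson with offspring mean $2p^N_b \to 2b/(r+b) < 1$ under \eqref{ass:thetal1}, and a standard subcritical large-deviation bound yields $m^N_{\bar\rho}(|\mathcal{T}|>c\log N) = O(N^{-\tilde\varepsilon(c)})$. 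On the truncated set, the number of trees in $\mathbb{T}$ of size $\leq c\log N$ is at most $A^{c\log N} = N^{c\log A}$ for some absolute constant $A$, so summing the uniform bound of Lemma~\ref{lem:tree} over these trees contributes $O(N^{c\log A - \varepsilon_3})$. Choosing $0 < c < \varepsilon_3/\log A$ keeps all three contributions of order $O(N^{-\varepsilon'})$.

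For the second estimate, the recursive (self-similar) structure of the law $m^N_{\bar\rho}$ and the tree-solving rules give a fixed-point equation for $\alpha^N$. Conditioning on the root: either it has a unique child labelled $\pm$ (with probabilities $p^N_\pm$), or it branches into two $*$-subtrees $\mathcal{T}_1,\mathcal{T}_2$ which are i.i.d.\ copies of $\mathcal{T}$ (with probability $p^N_b$). The solving rule at a branching root assigns $+$ iff $L(\mathcal{T}_2)=+$, or $L(\mathcal{T}_2)=-$ and $L(\mathcal{T}_1)=+$, hence
\begin{equation*}
\alpha^N = p^N_+ + p^N_b\,\alpha^N(2-\alpha^N).
\end{equation*}
Substituting the asymptotics $p^N_+ = r\bar\rho/(r+b) + O(N^{-\theta})$ and $p^N_b = b/(r+b) + O(N^{-\theta})$ from \eqref{outcome} and clearing denominators rearranges this into
\begin{equation*}
r(\bar\rho-\alpha^N) + b\,\alpha^N(1-\alpha^N) = O(N^{-\theta}),
\end{equation*}
which is the defining equation \eqref{eq:alpha} of $\alpha$ perturbed by $O(N^{-\theta})$. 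The derivative of $x \mapsto r(\bar\rho-x) + bx(1-x)$ at $x=\alpha$ equals $-r+b(1-2\alpha)$, whose absolute value is at least $r-b>0$ on $[0,1]$ under \eqref{ass:thetal1}; the implicit function theorem (or direct quadratic inversion selecting the root in $[0,1]$) then gives $\alpha^N = \alpha + O(N^{-\theta})$.

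The main obstacle is the calibration in the truncation step: one must balance the uniform-in-$\mathcal{T}$ error $N^{-\varepsilon_3}$ against the $N^{O(c)}$ admissible trees of size $\leq c\log N$, forcing $c$ small, while simultaneously keeping $c$ positive so that the tail bounds from \eqref{eq:A31} and from the Galton--Watson subcriticality remain summable in $N$. Assumption \eqref{ass:thetal1} is used twice in an essential way: it makes the Galton--Watson tree subcritical (ensuring the tail bound on $m^N_{\bar\rho}$), and it yields the non-degeneracy $|f'(\alpha)|\geq r-b>0$ needed to turn the perturbed equation into a sharp estimate on $\alpha^N-\alpha$.
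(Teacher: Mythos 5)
Your proposal is correct and follows essentially the same route as the paper: bridge $\rho^N_t(3)$ and $\alpha$ through $\alpha^N = m^N_{\bar\rho}(L(\mathcal T)=+)$, obtain the fixed-point relation $\alpha^N = p^N_+ + p^N_b\,\alpha^N(2-\alpha^N)$ by conditioning on the root of the Galton--Watson tree, rearrange into the perturbed defining equation $r(\bar\rho-\alpha^N)+b\alpha^N(1-\alpha^N)=O(N^{-\theta})$, and control the tree-truncation by balancing the uniform error $N^{-\varepsilon_3}$ of Lemma~\ref{lem:tree} against the $O(A^k)$ count of admissible trees (the paper makes $A=8$ explicit via Catalan numbers) and the two tail bounds \eqref{eq:A31} and GW-subcriticality. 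Your treatment is marginally more careful in two spots the paper glosses over: you explicitly invoke Lemma~\ref{lem:eta0A} together with the failure bound \eqref{eq:A34} to justify $\rho^N_t(3)=\bP^\dagger_{\{3\}}(L(\mathcal T_t)=+)+O(N^{-\varepsilon_2})$, and you supply the non-degeneracy argument $|f'|\geq r-b>0$ on $[0,1]$ (using \eqref{ass:thetal1}) that justifies the step from the perturbed quadratic to $\alpha^N=\alpha+O(N^{-\theta})$.
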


\begin{proof}[Proof of Lemma \ref{lem:tree}]
Fix the initial set $A=\{3\}.$ Given the process $\mathcal{A}$ built with the Poisson marks in $\mathcal{M}$, and given a family of independent left boundary excursions $Z^j$, we build in the same way that we built in Lemma \ref{lem:probdeath} $\mathcal{B}'$ from $\mathcal{B}$ a process $\mathcal{A}'$ by inserting the independent excursions each time a flag reaches site $2$ while no other flag is in $\{1,\dots, \ell_N\}$, where $\ell_N=(\log N)^4$. However, instead of  stopping the excursions $Z$ at the time  $\tau_{Z}$ when they reach site $3$, we stop them at the first time $\tau^m_{Z}$ they either hit site $3$ or meet a boundary mark. In particular, the time $s_j$ at which we stop the inserted excursion is 
\[s_j=(t_j+\tau^m_{Z^j})\wedge \inf\{s\geq t_j, \;3\in \mathcal{A}' (s)\}.\]

We do not detail this construction here, it is exactly identical to the one performed in step $2$ of the proof of  Lemma \ref{lem:probdeath}, except that the process is affected by the boundary marks. As for $\mathcal{B}$, the two processes $\mathcal{A}$ and $\mathcal{A}'$ have the same distribution. We also denote $\bQ_A$ their joint distribution, and $T'$ and $\kappa'(t)$ the counterparts of $T$ and $\kappa(t)$ (cf. \eqref{eq:DefT} and \eqref{eq:kappat}) for $\mathcal{A}'$. Once again, we say that this construction failed after inserting the $j$-th excursion if another flag reached site $3$ before the $j$-th excursion $Z^j$ reached site $3$ or met a boundary mark, i.e. if $s_j\neq (t_j+\tau^m_{Z^j}).$ Define the events 
\[F=\left\{\mbox{the construction failed before time }T'\right\},\]  
\[M=\left\{\mbox{A boundary mark occurred before $T'$ outside of an excursion}\right\},\]
\[O=\left\{\exists s\in [0,T'], \; \mathcal{A}'(s)\ni N/2\right\}\quad \mbox{ and } \quad S=\left\{T'\geq N^{-\widehat{\theta}}\right\}.\]
Using the same construction laid out at the beginning of Section \ref{label:determination}, we build with the process $\mathcal{A}'$ a tree $\mathcal{T}_t'$ distributed as $\mathcal{T}_t$ because $\mathcal{A}$ and $\mathcal{A}'$ have the same distribution.

\medskip

We further build a third process $\mathcal{A}''$, evolving exactly as $\mathcal{A}'$, except that:
\begin{itemize} 
\item [--] the boundary marks occurring outside of the inserted excursions are ignored.
\item [--] The time horizon $t$ is ignored as well, and when reaching time $t$, $\mathcal{A}''$ keeps evolving by following the marks in  $\mathcal{M}$ and inserting independent excursions until $T'':=\inf\{ s\geq 0, \; \mathcal{A}''(s)=\emptyset\}.$
\end{itemize}
We finally build a third tree $\mathcal{T}''_t$ using $\mathcal{A}''$ following the same construction as before. On the event $F^c\cap M^c \cap O^c\cap S^c$, we have  for any $t\geq N^{-\widehat{\theta}}$
\[\mathcal{A}''=\mathcal{A}' \mbox{ on } [0,t],\]
so that in particular $\mathcal{T}'_t=\mathcal{T}''_t=\mathcal{T}''_{T''}$. Furthermore, by construction, $\mathcal{T}''_{T''}$ is distributed according to $m^N_{\bar\rho}$. In particular, for any tree $\mathcal{T}\in\bT$, and any $t\geq N ^{-\widehat{\theta}}$
\begin{equation}
\label{eq:FMOS}
|\bQ_{\{3\}}(\mathcal{T}'_t=\mathcal{T})-m_{\bar\rho}^N(\mathcal{T})|\leq 2\bQ_{\{3\}}(F\cup M \cup O\cup S).
\end{equation}
Since $\mathcal{A} $ and $\mathcal{A'}$ have the same distribution, according to \eqref{eq:A32} and \eqref{eq:A33} in Corollary \ref{cor:A3}, 
\[\bQ_{\{3\}}(O\cup S)=O(N^{-\varepsilon_2}).\]
Furthermore, using the same arguments used to prove \eqref{eq:indepn*} and the Markov property, on the event $\kappa'(T')\leq \log N$, one straightforwardly obtains 
\begin{equation}
\label{eq:probM}
\bQ_{\{3\}}(M\cap\{\kappa'(T')\leq \log N\})=O((\log N)^3 N^{-\nu_\theta}).
\end{equation}
We do not detail this step, it is enough to use \eqref{eq:indepn*} on each interval $[\tau_i, \tau_{i+1})$ of time between two consecutive boundary marks are met. With probability $1-|\mathcal{A}(\tau_i)|^2O(N^{-\nu_\theta})$, each of those boundary marks were met during an excursion, and on $\kappa'(T')\leq \log N$ there are at most $2\log N+1$ such intervals, so that by union bound one obtains \eqref{eq:probM}. In particular, using \eqref{eq:A31} and letting $\varepsilon=\min(\varepsilon_1(1),\nu_\theta/2)$ yields $\bQ_{\{3\}}(M)=O(N^{-\varepsilon}).$

Similarly, using \eqref{eq:fail}, a union bound and Markov's property, we obtain that $\bQ_{\{3\}}(F\cap\{\kappa'(T')\leq \log N\})\leq O( N^{-\widehat{\theta}}\log N)$,  therefore  using \eqref{eq:A31} and letting $\varepsilon'=\varepsilon_1(1)\wedge (\widehat{\theta}/2)$ yields $\bQ_{\{3\}}(F)=O(N^{-\varepsilon'}).$ Finally, letting $\varepsilon_3=\min(\varepsilon_2,\varepsilon,\varepsilon')$, the right-hand side of \eqref{eq:FMOS} is $O(N^{-\varepsilon_3})$ for any $t\geq N^{\widehat{\theta}}$, which proves Lemma \ref{lem:tree}. 
\end{proof}

\begin{proof}[Proof of Corollary \ref{prop:valL}]
We start with the first identity. To prove it, one only needs to show that $m^N_{\bar\rho}(L(\mathcal{T})=+)=\alpha+O(N^-\theta)$. Define the events 
\[R_\pm=\{\mbox{the root has only one child, labeled } \pm\},\] 
and 
\[R_b=\{\mbox{the root has two children $v_1$ and $v_2$}\}.\] 
On $ R_b$, for $i=1,2$, we denote $\mathcal{T}_i$ the sub-tree of $\mathcal{T}$ composed of $v_i$ and its descendants. Note that conditionally to $R_b$,  $\mathcal{T}_1$ and  $\mathcal{T}_2$ are independent and distributed according to $m^N_{\bar\rho}$. Furthermore, by construction of the application $L$, we have the identity
\[\{L(\mathcal{T})=+\}=R_+\cup \pa{R_b\cap \{L(\mathcal{T}_2)=+\}}\cup \pa{R_b\cap \{L(\mathcal{T}_2)=-\} \cap\{L(\mathcal{T}_1)=+\}}.\]
The three events in the union above are disjoint, so that taking the measure $m^N_{\bar\rho}$ of both sides of the identity above yields, shortening $\alpha_N=m^N_{\bar\rho}(L(\mathcal{T})=+)$
\begin{equation*}
\alpha_N=p_+^N+p_b^N \alpha_N+p_b^N\pa{1-\alpha_N}\alpha_N,
\end{equation*}
which rewrites using the definition \eqref{outcome} of $p_\pm^N$ and $p_b^N$,
\[r\pa{\bar \rho-\alpha_N}+b\alpha_N\pa{1-\alpha_N}=O(N^{-\theta}),\]
which determines the boundary conditions for the equation, since it proves $\alpha_N=\alpha+O(N^{-\theta})$ defined in \eqref{eq:alpha} as wanted.

\medskip

We now prove that the supremum in the second term in the Corollary is $O(N^{-\varepsilon_4})$. Fix $\widehat{\theta}>0$ and $t\geq N^{-\widehat{\theta}}$, and shorten $\bT_k=\{\mathcal{T}\in \bT \; \mid \; |\mathcal{T}|\leq k\}$
\begin{align*}
\rho^N_t(3)&=\bP_{\{3\}}^\dagger( L(\mathcal{T}_{t})=+)\\
&= \bP^{\dagger}_{\{3\}}( L(\mathcal{T}_t)=+\mbox{ and }|\mathcal{T}_t|> k)+\sum_{\mathcal{T}\in \bT_k}\bP^{\dagger}_{\{3\}}( L(\mathcal{T})=+\mbox{ and }\mathcal{T}_t=\mathcal{T}).
\end{align*}
In particular, thanks to Lemma \ref{lem:tree}, we have for any $k$ and any $t\geq N^{-\widehat{\theta}}$
\begin{equation}
\label{eq:murhobar}
\abs{\rho^N_t(3)-m^N_{\bar\rho}( L(\mathcal{T})=+)}\leq  \bP^{\dagger}_{\{3\}}( |\mathcal{T}_t|> k) +m^N_{\bar\rho}(|\mathcal{T}_t|> k)+O(|\bT_k| N^{-\varepsilon_3}).
\end{equation}
We now obtain a crude estimate on $|\bT_k|$. Forgetting the leave's labels, to each tree $\mathcal{T}\in \bT_k$, one can associate a unique full binary tree (whose vertices all have either $0$ or $2$ children) by removing all the leaves in $ \mathcal{T}$, which are by assumption the only children. Since there are at most $k$ leaves in a tree with $k$ vertices, there are at most $2^k$ ways to associate to each leaf a label $\pm$. In particular, $|\bT_k|$ is less than $2^k$ times the number of full binary trees with less than $k$ vertices. The number of full binary trees with $k$ vertices is Catalan's number $C_{k-1}=O(4^k)$. In particular, 
\[|\bT_k|\leq 2^k \sum_{n=1}^k C_{n-1}=O(8^k).\]
We now choose $k=\frac{\varepsilon_3}{2\log 8}\log N$, which yields that the last term in \eqref{eq:murhobar} is $O(N^{\varepsilon_3/2})$. According to \eqref{eq:A31}, the first term in \eqref{eq:murhobar} is $O(N^{-\varepsilon_1\pa{\varepsilon_3/2\log 8}})$, whereas by construction of $m^N_{\bar\rho}$ the second term  is $O(N^{-\varepsilon'})$ for some positive $\varepsilon'$. Choosing 
\[\varepsilon_4=\min(\varepsilon_3/2,\varepsilon_1\pa{\varepsilon_3/2\log 8},\varepsilon')\]
 proves Corollary \ref{prop:valL}.
 \end{proof}

\begin{proof}[Proof of Lemma \ref{lem:bcor}]
In order to estimate the correlations between site $3$ and site $\ell \geq \delta N$, we start the process $\mathcal{A}$ from $\{3,\ell\}$, and denote $\mathcal{A}^{\{3\}}$, $\mathcal{A}^{\{\ell\}}\subset \mathcal{A}$ the sets of descendants of the flag initially at $3$, $\ell$. Let us denote $\tau$ the first time $\mathcal{A}^{\{3\}}$, $\mathcal{A}^{\{\ell\}}$ encounter, 
\[\tau=\sup\bra{t\leq 0, \; d\pa{\mathcal{A}^{\{3\}}(t), \mathcal{A}^{\{\ell\}}}(t)\leq 1},\]
with the convention $d(\emptyset, A)=\infty$ for any set $A$. Also denote by $T^{\{3\}}$ the lifespan of $\mathcal{A}^{\{3\}}$,
\[T^{\{3\}}=\sup\bra{t\leq 0, \; \mathcal{A}^{\{3\}}(t)=\emptyset}.\]
Then, up until time $\tau$, $\mathcal{A}^{\{3\}}$ and $\mathcal{A}^{\{\ell\}}$  can be coupled with two independent copies $\widetilde{\mathcal{A}}^{\{3\}}$ and  $\widetilde{\mathcal{A}}^{\{\ell\}}$. In particular, we can write 
\[|\varphi_t^N(3,\ell)|\leq \widehat{\bP}_{\{3,l\}}\pa{\tau> t\vee T^{\{3\}}}\leq \widehat{\bP}_{\{3,\ell\}}\pa{\tau>  T^{\{3\}}}.\]
To estimate the right-hand side, recall from \eqref{eq:A31} that with probability $1-O(N^{-\varepsilon_1(1)})$, the total number of flags $C(T^{\{3\}})$ 
created by $\mathcal{A}^{\{3\}}$ is less than $\log N$. Further recall according to \eqref{eq:A32}, $\widehat{\bP}_{\{3,k\}}(T^{\{3\}}>N^{-\widehat{\theta}})=O(N^{-\varepsilon_2})$. Finally, the probability that a flag travelled a distance at least $\delta N/4$ in a time $N^{-\widehat{\theta}}$  is of order $e^{-c\delta^2N^{\widehat{\theta}}}$. In summary, in order to have $\tau>  T^{\{3\}}$, one of three cases must have occurred.
\begin{itemize}
\item Either $\kappa(T)\geq \log N$, which occurs with probability $O(N^{-\varepsilon_1(1)})$.
\item Or $T^{\{3\}}>N^{-\widehat{\theta}}$, which occurs with probability $O(N^{-\varepsilon_2})$.
\item Or finally one of the (at most) $\log N+2$ flags (either one of the two flags initially in the system or one of the flags created) must have travelled a distance $\delta N/4$ before a time $N^{-\varepsilon}$, which by union bound occurs with probability $O(\log N e^{-c\delta^2N^{\widehat{\theta}}})$.
\end{itemize}
Letting $\varepsilon=\gab{\varepsilon_1}\wedge \varepsilon_2$, with probability $1-O(N^{-\varepsilon})$, the two processes $\mathcal{A}^{\{3\}}$, $\mathcal{A}^{\{\ell\}}$ evolve independently up until the process $\mathcal{A}^{\{3\}}$ dies. All these bounds being independent of $\ell\geq \delta N$, this proves \[\sup_{\delta N\leq \ell\leq N-1}|\varphi_t^N(3,\ell)|= O(N^{-\varepsilon}).\]
The second bound is identical.
\end{proof}

\section*{Acknowledgements}
This project has received funding from the European Research Council (ERC) under  the European Union's Horizon 2020 research and innovative programme (grant agreement   No 715734).


\begin{thebibliography}{99}

\bibitem{DMP12}
De Masi, A., Presutti, E., Tsagkarogiannis, D. and Vares, M: \emph{Truncated correlations in the stirring process with births and deaths}, Electronic Journal of Probability,  Vol. 17, no. 6 (2012).
\bibitem{ELX18}
Erignoux, C., Landim, C. and Xu, T.: \emph{Stationary states of boundary driven exclusion processes with nonreversible boundary dynamics}, Journal of Statistical Physics, Volume 171, no. 4, 599--631 (2018).
\bibitem{E18}
Erignoux, C.: \emph{Hydrodynamic limit of boundary driven exclusion processes with nonreversible boundary dynamics}, Journal of Statistical Physics, Volume 172, no. 5, 1327--1357 (2018).

\bibitem{EGN}
Erignoux, C., Gon\c calves, P. Nahum, G.: \emph{Hydrodynamics for  SSEP with non-reversible slow  boundary dynamics: Part I, the critical regime and beyond}, to appear in Journal of Statistical Physics.
%
\end{thebibliography}
\end{document}